\newtheorem{theorem}{Theorem}
\newtheorem{lemma}[theorem]{Lemma}
\newtheorem{corollary}[theorem]{Corollary}
\newtheorem{proposition}[theorem]{Proposition}
\newtheorem{definition}[theorem]{Definition}
\theoremstyle{remark}
\newtheorem*{remark}{Remark}
\newtheorem*{remarks}{Remarks}
\numberwithin{theorem}{section} \numberwithin{equation}{section}
\numberwithin{figure}{section}
\newcommand{\cM}{\mathcal{M}}
\newcommand{\Fe}{\mathcal{F}}
\newcommand{\GG}{\mathcal{G}}
\newcommand{\CC}{\mathcal{C}}
\newcommand{\wt}{\kappa}
\newcommand{\FF}{\mathcal{F}}
\newcommand{\HH}{\mathbb{H}}
\newcommand{\HHcusp}{\HH_{\frac{3}{2}-k,d}^{\text{cusp}}}
\newcommand{\HHcuspwt}{\HH_{\wt}^{\text{cusp}}}
\newcommand{\QQ}{\mathcal{Q}}
\newcommand{\R}{\mathbb{R}}
\newcommand{\C}{\mathbb{C}}
\newcommand{\Tr}{{\text {\rm Tr}}}
\newcommand{\Z}{\mathbb{Z}}
\newcommand{\N}{\mathbb{N}}
\newcommand{\SL}{{\text {\rm SL}}}
\newcommand{\sgn}{\operatorname{sgn}}
\newcommand{\pr}{\operatorname{pr}}
\newcommand{\shin}{\mathscr{S}^*}
\newcommand{\shim}{\mathscr{S}}
\newcommand{\reg}{\operatorname{reg}}
\newcommand{\W}{\mathcal{W}}
\newcommand{\iso}{\cong}
\newcommand{\re}{\textnormal{Re}}
\def\H{\mathbb{H}}
\newcommand{\Hcusp}{H_{2-2k}^{\text{cusp}}}
\newcommand{\Hcuspwt}{H_{\wt}^{\text{cusp}}}
\newcommand{\Za}{\mathfrak{Z}}
\newcommand{\PP}{\mathcal{P}}
\newcommand{\Eis}{h_0}
\renewcommand{\SS}{\mathbb{S}}
\newcommand{\MM}{\mathbb{M}}
\newcommand{\DD}{\mathcal{D}}
\newcommand{\Der}{\DD_{d,D}^{k-\frac{1}{2}}}
\newcommand{\JD}{\mathbb{J}_{k+\frac{1}{2}}^{D}}
\newcommand{\Jd}{\mathbb{J}_{\frac{3}{2}-k}^{d}}
\newcommand{\Sperp}{S_{2k}^{\perp,0}}
\newcommand{\Swtperp}{S_{\wt}^{\perp,0}}
\newcommand{\Swtperpdual}{S_{2-\wt}^{\perp,0}}
\newcommand{\SSwtperp}{\SS_{\wt}^{\perp,0}}
\newcommand{\SSperp}{\SS_{k+\frac{1}{2}}^{\perp,0}}
\newcommand{\SSperpD}[1]{\SS_{k+\frac{1}{2},#1}^{\perp,0}}
\begin{document}

\title[Shintani lifts]{Shintani lifts and fractional derivatives for harmonic weak Maass forms}

\author{Kathrin Bringmann} 
\address{Mathematical Institute\\University of
Cologne\\ Weyertal 86-90 \\ 50931 Cologne \\Germany}
\email{kbringma@math.uni-koeln.de}
\author{Pavel Guerzhoy}
\address{Department of Mathematics\\ University of Hawaii\\ Honolulu, HI 96822-2273}
\email{pavel@math.hawaii.edu}
\author{Ben Kane}
\address{Mathematical Institute\\University of
Cologne\\ Weyertal 86-90 \\ 50931 Cologne \\Germany}
\email{bkane@math.uni-koeln.de}
\date{\today}
\thanks{The research of the first author was supported by the Alfried Krupp Prize for Young University Teachers of the Krupp Foundation and by the Deutsche Forschungsgemeinschaft (DFG) Grant No. BR 4082/3-1.  The research of the second author is supported by  Simons Foundation Collaboration Grant.}
\subjclass[2010] {11F11, 11F25, 11F37}
\keywords{half-integral weight harmonic weak Maass forms, Shimura lifts, Shintani lifts, Hecke operators, fractional-derivatives}

\begin{abstract}
In this paper, we construct Shintani lifts from integral weight weakly holomorphic modular forms to half-integral weight weakly holomorphic modular forms.  Although defined by different methods, these coincide with the classical Shintani lifts when restricted to the space of cusp forms.  As a side effect, this gives the coefficients of the classical Shintani lifts as new cycle integrals.  This yields new formulas for the $L$-values of Hecke eigenforms.  When restricted to the space of weakly holomorphic modular forms orthogonal to cusp forms, the Shintani lifts introduce a definition of weakly holomorphic Hecke eigenforms.  Along the way, auxiliary lifts are constructed from the space of harmonic weak Maass forms which yield a ``fractional derivative'' from the space of half-integral weight harmonic weak Maass forms to half-integral weight weakly holomorphic modular forms.  This fractional derivative complements the usual $\xi$-operator introduced by Bruinier and Funke.
\end{abstract}

\maketitle

\section{Introduction and statement of results}

In this paper, we introduce an extension of Shintani's lifts \cite{Shintani} from integral weight weakly holomorphic modular forms to half-integral weight weakly holomorphic modular forms.  The resulting lifts are essentially equal to the classical lifts when one restricts to cusp forms, but our construction differs from that of Shintani, yielding a new interpretation of his lifts.  

A number of remarkable theorems have been proven using the relationship between integral and half-integral weight modular forms.  In celebrated work of Waldspurger \cite{Waldspurger}, the interplay between these two spaces was used to solve open problems in both spaces.  On the integral weight side, it had been (empirically) observed that central values of $L$-functions of integral weight Hecke eigenforms were basically squares.  On the other hand, the coefficients of half-integral weight eigenforms were shown by Shintani \cite{Shintani} to be certain cycle integrals of integral weight forms.  Waldspurger fused these two statements into one coherent theory by proving that the central value of the $L$-function for integral weight Hecke eigenforms is proportional to the square of a coefficient of a certain half-integral weight form.  The explicit constant of proportionality is the main theme of the famous Kohnen-Zagier formula \cite{KohnenZagier}.  By proving that the constant of proportionality is positive, Kohnen and Zagier showed the non-negativity of central values of $L$-series.  To give another example, Iwaniec \cite{Iwaniec} proved bounds for the size of the Fourier coefficients of half-integral weight modular forms.  Combining these bounds with the results of Waldspurger, Iwaniec obtained sub-convexity bounds for central values.  Conversely, recent developments in analytic number theory have led to improvements in the sub-convexity bounds of central $L$-values and accordingly sharpened the bounds of Iwaniec by using this connection in the reverse direction (for example, see \cite{BHM}).

Having demonstrated the utility of the connection between integral and half-integral weight modular forms, we now briefly turn to the history of the construction of lifts between them.  The interrelation between integral and half-integral weight modular forms was initiated by Shimura \cite{Shimura}, who constructed  a lift $\shim_{n}$ (for every $n\in \N$ squarefree)
 from half-integral weight cusp forms to integral weight holomorphic forms.   The surprising connection between these two spaces led to a flurry of activity.  Among the exciting work that followed, Shintani \cite{Shintani} defined a lift $\shin_d$ (for every fundamental discriminant $d$ and integer $k>1$ satisfying $(-1)^kd>0$) from integral to half-integral weight cusp forms whose coefficients are related to cycle integrals of the integral weight cusp forms which we now briefly describe.  There is a one-to-one correspondence between $Q$ in the set $\QQ_{\Delta}$ of binary quadratic forms of non-square discriminant $\Delta>0$ and semicircles $S_Q$ whose real endpoints are the roots of $Q$ (see \eqref{eqn:SQdef}).  Moreover, the automorphs of $Q$ in $\SL_2(\Z)$ are an infinite cyclic group $\Gamma_Q$.  Defining $C_Q:=S_Q/\Gamma_Q$, for every function $F$ satisfying weight $2\wt\in 2\Z$ modularity we define the \begin{it}cycle integral\end{it}
$$
\CC\left(F;Q\right):=\Delta^{\frac{1-
\wt}{2}}\int_{C_Q} F(\tau) Q(\tau,1)^{\wt-1}d\tau.
$$
Taking traces over all $\SL_2(\Z)$ equivalence classes twisted by the genus characters $\chi$ defined in \eqref{eqn:genusdef}, Shintani's lifts are then defined for $f\in S_{2k}$ (cf. (8) of \cite{KohnenFourier}) by 
\begin{equation}\label{eqn:shindef}
\shin_d\left(f\right)(\tau):=\sum_{\delta:\delta d>0}\left(\left(\delta d\right)^{\frac{k-1}{2}}\sum_{Q\in \SL_2(\Z)\backslash\QQ_{\delta d}} \chi(Q)\CC\left(f;Q\right)\right) q^{|\delta|},
\end{equation}
where $q:=e^{2\pi i \tau}$.
\rm
It was later shown by Kohnen and Zagier \cite{KohnenZagier} that Shintani's lift is adjoint to Shimura's lift.

In order to provide an isomorphism between these two spaces, Kohnen \cite{Kohnen} defined for $k\geq 0$ a distinguished subspace $\SS_{k+\frac{1}{2}}$ of weight $k+\frac{1}{2}$ cusp forms (now known as the \begin{it}Kohnen plus space\end{it}).   Since the Shimura and Shintani maps both commute with the Hecke operators, there exists for every fundamental discriminant $d$ with $(-1)^kd>0$ and weight $2k$ Hecke eigenform $f$ a constant $c_{f,d}\in \C$ (possibly zero) for which
$$
\shim_{d}\circ\shin_d\left(f\right) = c_{f,d} f.
$$
Kohnen \cite{Kohnen} further proved the existence of a linear combination of finitely many Shimura lifts which yields an isomorphism between $S_{2k}$ (classical weight $2k$ cusp forms) and $\SS_{k+\frac{1}{2}}$.  

In order to extend Shintani's lifts to weakly holomorphic modular forms, we require pairs of fundamental discriminants $d$ and $D$ which, as we assume throughout, satisfy $(-1)^kd>0$ and $(-1)^kD<0$.  Roughly speaking, the choice of $d$ determines the cuspidal projection of our Shintani lift $\shin_{d,D}$ (explicitly defined in \eqref{eqn:shindH}), while $D$ contributes the principal part of the lift, which governs the growth of the function towards the cusp $i\infty$.  More precisely, using an extension of the Petersson inner product to weakly holomorphic modular forms, $D$ provides the projection to the space orthogonal to cusp forms.  

The lifts $\shin_{d,D}$ are constructed via intermediary lifts on non-holomorphic modular forms known as harmonic weak Maass forms (see Section \ref{sec:Maass} for the definition).  When restricted to the subspace of cusp forms, they are furthermore related to the classical Shintani lifts via 
\begin{equation}\label{eqn:shindD}
\shin_{d,D}= \frac{1}{3}(-1)^{\left\lfloor\frac{k}{2}\right\rfloor} 2^{k-1} \shin_d.
\end{equation}
By computing the Fourier coefficients of the harmonic weak Maass forms in two different ways, one obtains a striking identity between Shintani's classical cycle integrals and cycle integrals of weight $0$ non-holomorphic modular forms.  To state the result, we need the antiholomorphic differential operator $\xi_{\wt}:=2iy^{\wt}\overline{\frac{\partial}{\partial\overline{\tau}}}$ and the Maass raising operator $R_{\wt}$ defined in \eqref{eqn:raisingdef}.  While $\xi_{\wt}$ maps harmonic weak Maass forms of weight $\wt$ to weakly holomorphic modular forms of weight $2-\wt$, $R_{\wt}$ maps non-holomorphic modular forms of weight $\wt$ to non-holomorphic modular forms of weight $\wt+2$.  We denote the $n$-th repeated iteration by $R_{\wt}^n$.  The following theorem yields a new construction of Shintani's lifts.
\begin{theorem}\label{thm:Zaddef2}
If $\cM$ is a weight $2-2k$ harmonic weak Maass form for which $\xi_{2-2k}(\cM)$ is a cusp form, then for every pair of positive discriminants $d,\delta$ for which $d\delta$ is not a square we have
\begin{equation}\label{eqn:Zaddef}
\sum_{Q\in \SL_2(\Z)\backslash\QQ_{\delta d}} \chi(Q)\CC\left(\xi_{2-2k}(\cM);Q\right)=C_{k}\sum_{Q\in \SL_2(\Z)\backslash\QQ_{\delta d}} \chi(Q)\CC\left(R_{2-2k}^{k-1}\left(\cM\right);Q\right),
\end{equation}
where $C_{k}:=-\frac{3\Gamma\left(\frac{k+1}{2}\right)}{2^{k-1}\Gamma\left(k-\frac{1}{2}\right)\Gamma\left(\frac{k}{2}\right)}$.
\end{theorem}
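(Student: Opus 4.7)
I would identify both sides of \eqref{eqn:Zaddef} with the same Fourier coefficient of the extended Shintani lift $\shin_{d,D}(\cM)$, computed in two distinct ways, and then equate the results.

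\textbf{Step 1: left-hand side via the cuspidal projection.} Since, by assumption, $f := \xi_{2-2k}(\cM)$ is a cusp form, the identification \eqref{eqn:shindD} applies, giving $\shin_{d,D}(f) = \tfrac{1}{3}(-1)^{\lfloor k/2\rfloor}2^{k-1}\shin_d(f)$. Reading off the $|\delta|$-th Fourier coefficient on the right-hand side via the classical formula \eqref{eqn:shindef} expresses it, up to an explicit scalar depending only on $d$, $\delta$ and $k$, as the twisted trace $\sum_{Q}\chi(Q)\CC(\xi_{2-2k}(\cM);Q)$ appearing on the left of \eqref{eqn:Zaddef}.

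\textbf{Step 2: right-hand side via the intermediary harmonic Maass lift.} On the other hand, the extended Shintani lift is constructed by pairing $\cM$ with a weight-$(2-2k,k+\tfrac12)$ theta kernel, in such a way that the $|\delta|$-th Fourier coefficient is obtained by extracting the discriminant-$\delta d$ part of that kernel. In this integral representation, the dependence of the kernel on the first variable involves $k-1$ applications of the Maass raising operator. I would invoke Stokes' theorem on a suitable truncation of $\SL_2(\Z)\backslash\mathbb{H}$ to transfer these $k-1$ raisings from the kernel onto $\cM$, leaving the kernel in a form in which its restriction to the fundamental domain collapses, after unfolding, into a sum over $\SL_2(\Z)$-classes of $\QQ_{\delta d}$ of integrals along the geodesics $C_Q$. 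This produces the twisted trace $\sum_{Q}\chi(Q)\CC(R_{2-2k}^{k-1}(\cM);Q)$ on the right of \eqref{eqn:Zaddef}, again up to an explicit scalar.

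\textbf{Step 3: reconciling the constants.} Equating the two expressions for the same Fourier coefficient and solving for the proportionality constant yields \eqref{eqn:Zaddef}; verifying that this constant is precisely $C_k$ is the main obstacle. The contributing factors are: (i) the scalar $\tfrac{1}{3}(-1)^{\lfloor k/2\rfloor}2^{k-1}$ from \eqref{eqn:shindD}; (ii) the normalization $\Delta^{(1-k)/2}$ built into $\CC$; (iii) the constant produced by a Bol-type identity relating $R_{2-2k}^{k-1}$ to the holomorphic derivative $D^{k-1}$, contributing a factor proportional to $(-4\pi)^{k-1}$; and (iv) a product of $\Gamma$-factors coming from the recursion for iterated raising combined with the Gaussian in the theta kernel, of shape $\Gamma(k-\tfrac12)\Gamma(\tfrac{k}{2})/\Gamma(\tfrac{k+1}{2})$. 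A secondary technical point is to justify the Stokes' manipulation even though $\cM$ may grow exponentially at the cusp; this is handled by the rapid decay of the theta kernel along any geodesic $C_Q$ of positive non-square discriminant, which kills the boundary contribution from the truncated fundamental domain as the truncation parameter tends to infinity.
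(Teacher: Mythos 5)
Your overall architecture---compute the $|\delta|$-th non-holomorphic Fourier coefficient of the weight $\frac{3}{2}-k$ lift of $\cM$ twice, once through $\xi_{\frac{3}{2}-k}$ and the classical Shintani lift, once through a construction producing twisted traces of $R_{2-2k}^{k-1}(\cM)$---is exactly the paper's (Theorem \ref{thm:Zaddef} combined with Theorem \ref{thm:tildeprop}). Two caveats on Step 1, though. First, the object whose coefficient you are extracting is $\Za_d(\cM)\in\HHcusp$, not ``$\shin_{d,D}(\cM)$'': the map $\shin_{d,D}$ is defined on $S_{2k}^!$, so that expression does not typecheck, and the coefficient you need is $c^-_{\Za_d(\cM)}(-|\delta|)$, read off from $\xi_{\frac{3}{2}-k}(\Za_d(\cM))$ via \eqref{eqn:xifourier}. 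Second, you lean entirely on \eqref{eqn:shindD}, but that identity is not an input---it \emph{is} Theorem \ref{thm:tildeprop}, whose proof is the substantive half of the argument (Hecke-equivariant reduction to $F_{1,2-2k}$, the Bruinier--Funke pairing, the Petersson coefficient formula, and Kohnen's explicit evaluation of $\shin_d(f_j)$ in terms of $\overline{c_j(|d|)}$). Citing it without proof leaves the harder half of the theorem unestablished.

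Step 2 is where you genuinely diverge from the paper, which uses no theta kernel at all: it writes $\widetilde{\Za}_d(F_{m,2-2k})$ as an explicit combination of half-integral weight Maass--Poincar\'e series via \eqref{eqn:ZadPoinc}, computes their coefficients as Kloosterman--Bessel sums (Lemma \ref{lem:Poinccoeff}), and matches these term by term against the Duke--Imamo\u{g}lu--T\'oth formula \eqref{eqn:Trcoeff} for twisted traces of the Niebur series $F_{-m}(k;\tau)=(4\pi)^{1-k}R_{2-2k}^{k-1}(F_{m,2-2k})$ (see \eqref{eqn:Rlift}). Your regularized theta-lift route is viable in principle (it is essentially the approach of Alfes, acknowledged in the introduction), but as sketched it has a concrete gap: Step 1 computes a coefficient of the paper's lift and Step 2 a coefficient of a theta integral, and you never identify the two functions---e.g.\ by computing the principal part of the theta lift and invoking uniqueness in $\HHcusp$. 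Without that identification the two expressions cannot be equated. Two further technical points: (i) there is no Bol-type identity in play, since $R_{2-2k}^{k-1}$ lands in weight $0$ rather than $2k$; the constant it contributes is $\prod_{j=0}^{k-2}(1+j)=(k-1)!$ from the raising recursion \eqref{eqn:raiseP}, which combines with the $\Gamma$-quotients in \eqref{eqn:wideTr} and \eqref{eqn:shindcycle} to give $C_k$; (ii) the boundary term in your Stokes argument sits at the cusp of the truncated fundamental domain, not along the geodesics $C_Q$; what controls it is that for $\delta d>0$ non-square no form $[a,b,c]\in\QQ_{\delta d}$ has $a=0$, so the relevant component of the kernel decays exponentially at $i\infty$ and overwhelms the growth of $\cM$ there.
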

\begin{remark}
The twisted traces on the right hand side of \eqref{eqn:Zaddef} were studied for the excluded case $k=1$ in \cite{DIT},  where cycle integrals for meromorphic and non-holomorphic forms were first considered.  Cycle integrals of this type also occur in \cite{DITAbh}.  Methods for the case $k=1$ were developed in \cite{BFI} which we expect could be extended to yield the coefficients in Theorem \ref{thm:Zaddef2} when $\delta d$ is a square.
\end{remark}
The lifts $\shin_{d,D}$ also satisfy a number of remarkable properties on the entire space of weakly holomorphic modular forms.
\begin{theorem}\label{thm:proportion}
\noindent

\noindent
\begin{enumerate}
\item
The lifts $\shin_{d,D}$ all commute with the Hecke operators.
\item
The lifts $\shin_{d,D}$ all map cusp forms to cusp forms and preserve orthogonality to cusp forms.
\end{enumerate}
\end{theorem}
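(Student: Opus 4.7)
The plan is to establish (2) first via Petersson adjointness, then deduce (1) from the explicit construction through the auxiliary harmonic weak Maass form lift combined with (2).

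For the first assertion of (2), \eqref{eqn:shindD} reduces the statement on $S_{2k}$ to the classical fact that Shintani's lift $\shin_d$ maps $S_{2k}$ into the Kohnen plus space $\SS_{k+\frac{1}{2}}$. For the orthogonality-preserving assertion, I plan to establish a Petersson adjointness of the form
\begin{equation*}
\left\langle \shin_{d,D}(f), g \right\rangle_{\reg} = c_k \left\langle f, h_g \right\rangle_{\reg}
\end{equation*}
valid for every $g \in \SS_{k+\frac{1}{2}}$, where $h_g \in S_{2k}$ is a cusp form depending linearly on $g$ (essentially a classical Shimura-type lift) and the inner products are regularized in order to accommodate weakly holomorphic inputs. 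Such an identity should follow by unfolding the integral defining the auxiliary harmonic weak Maass form lift referenced in \eqref{eqn:shindH}, combined with the classical Shimura--Shintani duality of Kohnen--Zagier on the cuspidal part. For $f \in \Sperp$ the right-hand side vanishes because $h_g$ is cuspidal, yielding $\shin_{d,D}(f) \in \SSperp$.

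For (1), the cleanest route is to realize $\shin_{d,D}$ as an integral transform against a Hecke-equivariant kernel $K_{d,D}(\tau,z)$ arising from the construction in \eqref{eqn:shindH}. The standard matching of Hecke actions on the two variables of the kernel then yields $\shin_{d,D}(T_{p^2}f) = T_p \shin_{d,D}(f)$ directly. Alternatively, one splits $f = f_S + f_\perp$ into cuspidal and orthogonal components under the regularized inner product: on $f_S$ the claim follows from \eqref{eqn:shindD} and Shintani's classical Hecke commutation, while on $f_\perp$ the Hecke-equivariance of each operator used to build $\shin_{d,D}$ from the auxiliary lift (the operator $\xi_{2-2k}$ and the iterated raising operator $R_{2-2k}^{k-1}$, both of which intertwine Hecke operators with appropriate weight-matched indexing) combined with (2) yields the result.

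The main technical obstacle will be the rigorous verification of the Petersson adjointness for weakly holomorphic inputs and the Hecke-equivariance of the auxiliary harmonic weak Maass form lift: the unfolding must proceed through the auxiliary construction rather than directly on cycle integrals of cusp forms, and convergence issues force a careful regularization. Once these ingredients are in place, both parts of Theorem~\ref{thm:proportion} reduce to classical commutation and duality results of Shintani, Shimura, and Kohnen--Zagier.
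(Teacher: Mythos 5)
Your overall architecture (prove the mapping properties in (2) and feed them into (1), with the cuspidal part handled by \eqref{eqn:shindD} and classical Shintani theory) is consistent with the paper, but the central step of your orthogonality argument has a genuine gap. You propose to prove $\shin_{d,D}(f)\in\SSperp$ for $f\in\Sperp$ via a regularized Petersson adjointness $\left(\shin_{d,D}(f),g\right)_{\reg}=c_k\left(f,h_g\right)_{\reg}$, obtained by ``unfolding the integral defining the auxiliary lift.'' But in this paper the auxiliary lift $\Za_D$ is not defined by an integral transform at all: its Fourier coefficients are prescribed coefficient-by-coefficient as twisted traces of CM values of $R_{2-2k}^{k-1}(\cM)$ (see \eqref{eqn:ZaDdef}), so there is nothing to unfold, and the asserted adjointness for weakly holomorphic inputs is exactly the hard content you would still need to supply. (Such a kernel realization does exist in the theta-lift literature, but importing it amounts to a different construction.) The paper avoids this entirely: Proposition \ref{prop:Zmodular} shows that $\Za_D(\cM)$ lies in the span of the Poincar\'e series $P_{(-1)^kDm^2,k+\frac{1}{2}}$, and Proposition \ref{prop:orthog} proves directly, via the Bruinier--Funke pairing and a Stokes' theorem boundary computation, that these Poincar\'e series are orthogonal to cusp forms. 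Orthogonality of $\shin_{d,D}(f)$ for $f\in\Sperp$ is then immediate from \eqref{eqn:shindH}.

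A second, smaller issue concerns (1) on the orthogonal complement of the cusp forms. The Hecke equivariance needed there is that of $\Za_D$ itself (together with that of $\DD^{2k-1}$), and it does not follow from the equivariance of $\xi_{2-2k}$ or of the raising operator $R_{2-2k}^{k-1}$ as you suggest; the paper's Lemma \ref{lem:Hecke} proves it by comparing principal parts of both sides and invoking the fact that elements of $\SSperp$ are uniquely determined by their principal parts, which again rests on the orthogonality statement above. So the logical order you chose --- (2) before (1) --- is the right one, but you should replace the kernel/unfolding step by either the Poincar\'e-series argument of the paper or a genuine theta-kernel construction, and you should prove the Hecke equivariance of the auxiliary lift by a principal-part comparison rather than by operator-by-operator intertwining.
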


In addition to yielding the proportionality in Theorem \ref{thm:Zaddef2}, the construction of pairs of lifts from integral to half-integral weight harmonic weak Maass forms plays a key role in extending the Shintani lifts to include weakly holomorphic modular forms.  The resulting pairs of Zagier lifts, $\Za_d$ and $\Za_D$, build upon a construction of lifts \cite{DukeJenkins} between spaces of weakly holomorphic modular forms.  The lifts $\Za_d$ \eqref{eqn:Zadpp2} map harmonic weak Maass forms of weight $2-2k$ to harmonic weak Maass forms of weight $\frac{3}{2}-k$, while the lifts $\Za_D$ \eqref{eqn:ZaDdef} map weight $2-2k$ harmonic weak Maass forms directly to weight $k+\frac{1}{2}$ weakly holomorphic modular forms.  Using the connection of Waldspurger \cite{Waldspurger} between coefficients of half-integral weight Hecke eigenforms and central values of integral weight Hecke eigenforms, Theorem \ref{thm:proportion} (2) leads to the following classification of the vanishing of the central $L$-value $L\left(f,\chi_d,k\right)$ of $f\in S_{2k}$ twisted by the quadratic character $\chi_d$.  
\begin{corollary}\label{cor:weakly}
Suppose that a cusp form $f$ is a weight $2k$ Hecke eigenform and $\mathcal{M}$ is a weight $2-2k$ harmonic weak Maass form satisfying $\xi_{2-2k}\left(\mathcal{M}\right)=f$.  Then $\Za_d\left(\mathcal{M}\right)$ is weakly holomorphic if and only if $L\left(f,\chi_d,k\right)=0$.
\end{corollary}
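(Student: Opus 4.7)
The plan is to convert weak-holomorphy of $\Za_d(\mathcal{M})$ into a statement about the classical Shintani lift of $f$, which can then be analyzed via Waldspurger's theorem in its Kohnen--Zagier refinement.

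First I would observe that a harmonic weak Maass form is weakly holomorphic precisely when its image under $\xi$ vanishes; applied here, the question becomes whether $\xi_{\frac{3}{2}-k}\left(\Za_d\left(\mathcal{M}\right)\right) = 0$. From the construction of the auxiliary lift $\Za_d$ in \eqref{eqn:Zadpp2}, together with \eqref{eqn:shindD} relating $\shin_{d,D}$ to the classical $\shin_d$, I expect an intertwining identity of the shape
$$
\xi_{\frac{3}{2}-k}\left(\Za_d\left(\mathcal{M}\right)\right) = c \cdot \shin_d\left(\xi_{2-2k}\left(\mathcal{M}\right)\right) = c \cdot \shin_d(f)
$$
for some nonzero constant $c$ depending only on $k$. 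Heuristically, $\xi$ kills the holomorphic principal part contribution built into $\Za_d(\mathcal{M})$ (which, by Theorem \ref{thm:proportion}(2), lies in the orthogonal-to-cusp-forms piece governed by the parameter $D$) and isolates the cuspidal piece, which by construction matches the classical Shintani lift of $f$ up to the explicit constant from \eqref{eqn:shindD}.

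Granting the intertwining, the corollary reduces to showing $\shin_d(f) = 0$ if and only if $L(f,\chi_d,k) = 0$. Since $f$ is a Hecke eigenform and $\shin_d$ commutes with the Hecke operators, $\shin_d(f)$ lies in the line spanned by the Shimura correspondent $g\in \SS_{k+\frac{1}{2}}$ of $f$; in fact $\shin_d(f)$ is, up to a nonzero absolute constant, equal to $\overline{c_g(|d|)}\cdot g$, where $c_g(|d|)$ denotes the $|d|$-th Fourier coefficient of $g$. The Kohnen--Zagier formula asserts that $|c_g(|d|)|^2$ is a positive constant multiple of $L(f,\chi_d,k)$, so $\shin_d(f) = 0$ if and only if $L(f,\chi_d,k) = 0$.

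The main obstacle is establishing the intertwining identity between $\xi_{\frac{3}{2}-k} \circ \Za_d$ and $\shin_d \circ \xi_{2-2k}$. Once $\Za_d$ is written out explicitly via \eqref{eqn:Zadpp2} and one understands the action of $\xi$ on its non-holomorphic part, this should reduce to a direct comparison of Fourier expansions (in the spirit of Theorem \ref{thm:Zaddef2}); after this step everything else is a standard invocation of the Shimura--Shintani--Kohnen--Zagier correspondence.
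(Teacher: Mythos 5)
Your proposal is correct and follows essentially the same route as the paper: the intertwining identity you flag as the main obstacle is precisely Theorem \ref{thm:tildeprop} (combined with $\Za_d=\widetilde{\Za}_d$ from Proposition \ref{prop:Zmodular}), which gives $\xi_{\frac{3}{2}-k}\left(\Za_d(\mathcal{M})\right)=\frac{1}{3}(-1)^{\left\lfloor k/2\right\rfloor}2^{k-1}\shin_d(f)$, and the paper then concludes exactly as you do by invoking the Kohnen--Zagier/Waldspurger equivalence $\shin_d(f)=0\Leftrightarrow L(f,\chi_d,k)=0$. The only cosmetic difference is that the paper phrases the reduction through the vanishing of $\shin_{d,D}(f)$ via the definition \eqref{eqn:shindH} rather than directly through $\xi_{\frac{3}{2}-k}\circ\Za_d$.
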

After completing this paper, Bruinier informed the authors that  Zagier lifts also appear as theta lifts.  In particular, Alfes \cite{Alfes}, motivated by algebraicity questions, has obtained overlapping partial results extending the Zagier lifts in certain cases.  The cases of Corollary \ref{cor:weakly} with $k$ even are contained in \cite{Alfes}.  

Since the obstruction to surjectivity of the classical Shintani maps yield interesting arithmetic information about vanishing of $L$-values, it is worthwhile to determine the image of the Shintani lifts when restricting to the subspace $\Sperp$ consisting of those forms orthogonal to cusp forms with vanishing constant term.  For this, we decompose the orthogonal complement of cusp forms $\SSperp$ in weight $k+\frac{1}{2}$ with vanishing constant terms into subspaces $\SSperpD{D}$ consisting of those forms whose principal parts are supported in the square class $-|D|m^2$ ($m\in \N_0$).
\begin{theorem}\label{thm:shinimage}
For each pair $d,D$, the restriction of $\shin_{d,D}$ to $\Sperp$ yields a bijection from $\Sperp$ to $\SSperpD{D}$.
\end{theorem}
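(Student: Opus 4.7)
The plan is to prove Theorem \ref{thm:shinimage} in three stages: establish that $\shin_{d,D}(\Sperp)\subseteq\SSperpD{D}$, reduce the bijection to a statement on principal parts, and verify that statement by exhibiting a triangular structure with respect to a canonical basis.

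For containment, Theorem \ref{thm:proportion}(2) already guarantees that $\shin_{d,D}$ preserves orthogonality to cusp forms. What remains is to check that the principal part of $\shin_{d,D}(f)$ is supported at exponents of the form $-|D|m^2$ with $m\in\N$ and that the constant term vanishes. Both follow from the explicit Fourier expansion of $\shin_{d,D}(f)$ produced by the construction via the harmonic Maass intermediaries $\Za_d$ and $\Za_D$: the kernel cutting out the non-cuspidal contribution is indexed by the square classes of $D$ by design, and the constant term vanishes because $f$ itself has vanishing constant term.

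For the reduction, any element of $\Sperp$ or $\SSperpD{D}$ is uniquely determined by its principal part: an element with vanishing principal part would be holomorphic, have vanishing constant term, and be orthogonal to cusp forms, hence a cusp form orthogonal to the cusp forms, and therefore zero. Thus both spaces are identified with their respective spaces of admissible principal parts, and the question becomes whether $\shin_{d,D}$ induces a bijection between these.

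To verify this I would work with the canonical Miller-type bases $\{f_n\}_{n\ge 1}$ of $\Sperp$ characterized by $f_n=q^{-n}+O(q)$ and $\{g_m\}_{m\ge 1}$ of $\SSperpD{D}$ characterized by $g_m=q^{-|D|m^2}+O(q)$, then read off from the Fourier expansion the polar coefficients of $\shin_{d,D}(f_n)$. A direct analysis should show that the coefficient of $q^{-|D|m^2}$ in $\shin_{d,D}(f_n)$ vanishes for $m>n$ and is a nonzero explicit constant at $m=n$, yielding a lower-triangular transition matrix with nonvanishing diagonal. Such a matrix is automatically invertible, giving both injectivity and surjectivity simultaneously.

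The hard part is that last computation, and in particular showing that the diagonal entry is nonzero. This reduces to evaluating a regularized cycle integral of the Shintani kernel paired against $q^{-n}+O(q)$, and is controlled by the detailed Fourier-coefficient computations for the Maass intermediaries $\Za_d,\Za_D$ developed earlier in the paper. The hypothesis $(-1)^kD<0$ enters precisely here: it ensures that the relevant genus-character twisted sum does not vanish, which is what ultimately powers the non-vanishing of the diagonal.
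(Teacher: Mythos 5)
Your route is viable and genuinely different from the paper's, so let me first compare. The paper's proof is a two-line composition of bijections: on $\Sperp$ the definition \eqref{eqn:shindH} collapses to $\shin_{d,D}=\Za_D\circ\left(\DD^{2k-1}\right)^{-1}$, and the theorem follows because $\DD^{2k-1}:\Hcusp\to\Sperp$ is a bijection (Theorem 1.2 of Bruinier--Ono--Rhoades) and $\Za_D:\Hcusp\to\SSperpD{D}$ is a bijection (Proposition \ref{prop:bij}). Your argument --- reduce both spaces to their principal parts and exhibit a triangular transition matrix --- is a correct hands-on unwinding of the surjectivity step of Proposition \ref{prop:bij}: the reduction to principal parts is sound, and the matrix is indeed lower triangular because $\left(\DD^{2k-1}\right)^{-1}(g_n)=-n^{1-2k}F_{n,2-2k}$ by \eqref{eqn:DDduality}, so by \eqref{eqn:ZaDdef} the principal part of $\shin_{d,D}(g_n)$ is $-\sum_{e\mid n}\chi_D(e)e^{-k}q^{-\left(\frac{n}{e}\right)^2|D|}$, whose most negative exponent is $-n^2|D|$. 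What the paper's approach buys is that no basis computation is needed at all; what yours buys is an explicit, unitriangular-up-to-sign change of basis between $\{g_n\}$ and the canonical basis of $\SSperpD{D}$.

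The one substantive problem is your final paragraph, which misidentifies where the content lies. The diagonal entry is not a regularized cycle integral of a Shintani kernel and involves no genus-character twisted sum: it is the $e=1$ term of the divisor sum above, namely $-\chi_D(1)=-1$. This is essentially definitional, since the principal part of $\Za_D$ is prescribed by \eqref{eqn:ZaDdef} (equivalently \eqref{eqn:wideZaDdef}); the genus characters and cycle integrals enter only the coefficients of positive index, which are irrelevant to your triangular matrix. Likewise, the hypothesis $(-1)^kD<0$ is what places the lift in weight $k+\frac{1}{2}$ and the correct Kohnen plus space; it does not ``power the non-vanishing of the diagonal.'' What your argument genuinely does need, and should cite rather than treat as automatic, is that $\Za_D(\cM)$ really lies in $\SSperpD{D}$ --- in particular its orthogonality to cusp forms (Proposition \ref{prop:orthog}) --- since without that the identification of $\SSperpD{D}$ with its space of principal parts, on which your whole reduction rests, would fail.
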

We have seen that $\Za_d$ and $\Za_D$ play an important role in our new construction of the Shintani lift.  To better understand these lifts, we next determine the image of the space $\Hcusp$ of harmonic weak Maass forms for which $\xi_{2-2k}$ maps to cusp forms.  As in Theorem \ref{thm:shinimage}, it is natural to decompose the space of weight $\frac{3}{2}-k$ harmonic weak Maass forms into subspaces $\HHcusp$ consisting of those forms whose principal parts are supported in the $-|d|m^2$ square class ($m\in \N$).  In particular, elements of $\HHcusp$ map to cusp forms under the $\xi$-operator.

\begin{theorem}\label{thm:Zaharmonic}
\noindent

\noindent
\begin{enumerate}
\item
Each lift $\Za_D$ is an isomorphism from $\Hcusp$ to $\SSperpD{D}$.
\item
Every  map $\Za_d$ is an isomorphism from $\Hcusp$ to $\HHcusp$.
\end{enumerate}
\end{theorem}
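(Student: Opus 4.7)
The strategy is to leverage the exact sequences induced by the $\xi$-operator in each weight. In weight $2-2k$ one has
\begin{equation*}
0 \longrightarrow M_{2-2k}^{!} \hookrightarrow \Hcusp \xrightarrow{\xi_{2-2k}} S_{2k} \longrightarrow 0,
\end{equation*}
and a parallel sequence for the weight $\frac{3}{2}-k$ space $\HHcusp$. The plan for part (2) is to show that $\Za_d$ is a morphism of such four-term sequences whose restrictions to weakly holomorphic kernels and cuspidal quotients are both known isomorphisms, and then to conclude by the five lemma.

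More concretely, I would first check that $\Za_d$ sends $\Hcusp$ into $\HHcusp$. The support of the principal part of $\Za_d(\cM)$ in the square class $-|d|m^{2}$ is built into the definition of the lift, so the substantive point is cuspidality of the shadow. For this I expect an intertwining identity of the shape
\begin{equation*}
\xi_{\frac{3}{2}-k}\circ \Za_d \;=\; c_k\,\shim_d^{\,\ast}\circ \xi_{2-2k},
\end{equation*}
with $\shim_d^{\ast}$ a classical Shimura/Shintani-type lift on the cuspidal side and $c_k$ an explicit constant; this should drop out of comparing holomorphic projections in the definition of $\Za_d$. Granted this, the restriction of $\Za_d$ to $M_{2-2k}^{!}$ is precisely the weakly holomorphic Zagier lift of \cite{DukeJenkins}, which is already known to be an isomorphism onto $M_{\frac{3}{2}-k}^{!}\cap \HHcusp$, and the induced map on $S_{2k}$ is a classical Shintani-type map to $\SS_{k+\frac{1}{2}}$, which is an isomorphism by Kohnen--Zagier. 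The five lemma then yields that $\Za_d\colon \Hcusp\to\HHcusp$ is an isomorphism.

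For part (1) the plan is to factor
\begin{equation*}
\Za_D \;=\; \DerDi \circ \Za_d
\end{equation*}
on $\Hcusp$, where $\DerDi$ is the fractional derivative introduced earlier. This factorization should be established by matching Fourier expansions, using the identity in Theorem \ref{thm:Zaddef2} (which already encodes the relation between cycle integrals coming from $R_{2-2k}^{k-1}$ and those coming from $\xi_{2-2k}$) to translate the coefficient definition of $\Za_D$ into the image of $\Za_d$ under $\DerDi$. The fractional derivative $\DerDi$ is constructed earlier to be an isomorphism $\HHcusp\to \SSperpD{D}$ (this is the whole point of its definition as a complement to $\xi_{\frac{3}{2}-k}$), so combining with part (2) gives the isomorphism of $\Za_D$ onto $\SSperpD{D}$. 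As a cross-check, the composition with $\shin$-type projections on the cuspidal quotient should be compatible with Theorem \ref{thm:shinimage}.

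The principal obstacle will be proving the intertwining $\xi_{\frac{3}{2}-k}\circ \Za_d = c_k\,\shim_d^{\ast}\circ \xi_{2-2k}$ and the factorization $\Za_D=\DerDi\circ \Za_d$. Both require careful Fourier coefficient manipulations; in particular, one must control the contribution of the non-holomorphic part of $\cM$ under the lifts, and one must verify that the constants appearing in Theorem \ref{thm:Zaddef2} propagate correctly through the raising operator $R_{2-2k}^{k-1}$ so that the cuspidal and orthogonal components assemble into the desired fractional derivative identity. Once these two relations are in place, the rest of the argument is a formal diagram chase.
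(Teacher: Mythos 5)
Your five-lemma strategy for part (2) breaks down at both ends of the diagram. The intertwining identity you posit is indeed true (it is Theorem \ref{thm:tildeprop}: $\xi_{\frac{3}{2}-k}\circ\Za_d=\frac{1}{3}(-1)^{\left\lfloor\frac{k}{2}\right\rfloor}2^{k-1}\,\shin_d\circ\xi_{2-2k}$), but the induced map on cuspidal quotients $S_{2k}\to\SS_{k+\frac{1}{2}}$ is therefore a nonzero multiple of the \emph{single} Shintani lift $\shin_d$, and this is \emph{not} an isomorphism: it annihilates every Hecke eigenform $f$ with $L\left(f,\chi_d,k\right)=0$, and its image misses the corresponding half-integral weight eigenforms. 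Kohnen's isomorphism requires a linear combination of lifts over several discriminants, not one fixed $d$. Correspondingly, your other outer map also fails: the restriction of $\Za_d$ to $M_{2-2k}^!$ is injective but not surjective onto $\MM_{\frac{3}{2}-k,d}^!$, because if $\xi_{2-2k}(\cM)=f$ with $L\left(f,\chi_d,k\right)=0$ then $\Za_d(\cM)$ lies in $\MM_{\frac{3}{2}-k,d}^!$ (this is exactly Corollary \ref{cor:weakly}) while $\cM\notin M_{2-2k}^!$. With neither outer map an isomorphism, the short five lemma gives nothing. For part (1), the factorization $\Za_D=\Der\circ\Za_d$ is circular in this paper: $\Der$ is \emph{defined} in \eqref{eqn:Shimura} as $\Za_D\circ\Za_d^{-1}$, so its existence and its isomorphism property presuppose the theorem you are trying to prove; there is no independent construction of the fractional derivative to appeal to. You also leave unaddressed the orthogonality of $\Za_D(\cM)$ to cusp forms, which is what places the image in $\SSperpD{D}$ and requires a genuine Stokes-theorem argument.

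The paper's actual proof avoids the cuspidal quotient entirely and is more elementary once the modularity and orthogonality statements are in place: elements of both target spaces are uniquely determined by their principal parts (in weight $\frac{3}{2}-k<0$ because $\SS_{\frac{3}{2}-k}=\{0\}$, and in weight $k+\frac{1}{2}$ because of orthogonality to cusp forms together with the vanishing constant term); the principal-part map $c_{\cM}^+(-m)\mapsto\sum_{n\mid m}\chi_d(n)n^{k-1}q^{-\left(\frac{m}{n}\right)^2|d|}$ is triangular, giving injectivity; and surjectivity follows by recursively prescribing principal parts using the Poincar\'e series basis $F_{m,2-2k}$. If you want to salvage your outline, you would have to replace the five lemma by an argument of this principal-part type.
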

\begin{remark}
Theorem \ref{thm:proportion} further allows us to use the maps $\Za_d$ to give an alternate proof of the famous classical theorem of Kohnen \cite{Kohnen}, proving the existence of the Shimura isomorphism mentioned above.
\end{remark}

While the Zagier lifts $\Za_d$ give a new criterion to determine vanishing of central $L$-values, the lifts $\Za_D$ may be used to build a theory of half-integral weight weakly holomorphic Hecke eigenforms.  This parallels a construction of integral weight weakly holomorphic Hecke eigenforms by the second author \cite{GuerzhoyHecke}.  He builds such forms by using the differential operator $\DD^{2k-1}:=\left(\frac{1}{2\pi i}\frac{\partial}{\partial \tau}\right)^{2k-1}$ to define a distinguished finite dimensional quotient space.  A key observation is the fact that $\Sperp$, which is the image under $\DD^{2k-1}$ of the subspace $S_{2-2k}^!$ of weakly holomorphic modular forms with vanishing constant term, is preserved by the Hecke operators. 

Trying to follow this construction directly in the half-integral weight case immediately runs into a number of roadblocks.  Although we have the operator $\xi_{\frac{3}{2}-k}$, we are missing a complementary operator which plays the role of $\DD^{2k-1}$.  Without the ability to act by a ``fractional derivative,'' $\DD$ does not aid in the construction of such an operator.  Moreover, even once an operator from the space of weight $\frac{3}{2}-k$ harmonic weak Maass forms in Kohnen's plus space to $\SSperp$ is found, one needs to determine a distinguished subspace which is preserved under the Hecke operators.  To overcome these problems, we define the subspace
\begin{equation}\label{eqn:JDdef}
\JD:=\Za_D\left(S_{2-2k}^!\right),
\end{equation}
which is Hecke stable and serves the role of $\DD^{2k-1}\left(S_{2-2k}^!\right)$  in our construction of half-integral weight weakly holomorphic Hecke eigenforms.  We call a weight $k+\frac{1}{2}$ weakly holomorphic modular form $F$ a \begin{it}Hecke eigenform\end{it} if for every $n\in \N$ there exists $\lambda_n\in \C$ and $F_n\in\bigoplus_{D}\JD$ such that 
\begin{equation}\label{eqn:HE}
F\big| T_n = \lambda_n F + F_n,
\end{equation}
where $T_n$ is the $n$-th Hecke operator.  Since there are already Hecke eigenforms for $\MM_{k+\frac{1}{2}}$ and $\JD\subseteq \SSperp$ by Theorem \ref{thm:Zaharmonic}, it suffices to study Hecke eigenforms on the subspace $\SSperp$.

The next theorem shows that
\begin{equation}\label{eqn:Ddecomp}
\SSperp\Big/\bigoplus_{D}\JD = \bigoplus_D \left(\SSperpD{D}\Big/\JD\right)
\end{equation}
is spanned by Hecke eigenforms.  However, contrasting the integral weight case, the eigenspaces are infinite dimensional.  We overcome this by looking separately at each component of the decomposition on the right hand side of \eqref{eqn:Ddecomp}, which turn out to be finite dimensional. 
\begin{theorem}\label{thm:HeckeD}
The spaces $\SSperpD{D}\Big/\JD$ and $\MM_{k+\frac{1}{2}}$ are isomorphic as Hecke modules.
\end{theorem}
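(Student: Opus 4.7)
The plan is to reduce the statement to Kohnen's classical isomorphism between $M_{2k}$ and $\MM_{k+\frac{1}{2}}$ by transporting the quotient $\SSperpD{D}/\JD$ to the integer weight side via the Zagier lift $\Za_D$. By Theorem \ref{thm:Zaharmonic}~(1), $\Za_D$ is an isomorphism from $\Hcusp$ to $\SSperpD{D}$, and by the definition \eqref{eqn:JDdef} of $\JD$ it sends $S_{2-2k}^!\subset \Hcusp$ bijectively onto $\JD$. Consequently, $\Za_D$ descends to a vector space isomorphism
$$
\Hcusp/S_{2-2k}^!\;\xrightarrow{\;\sim\;}\;\SSperpD{D}/\JD.
$$
Hecke-equivariance of this descent (under the Shimura-type matching of integer weight $T_n$ with half-integral weight $T_{n^2}$) should follow from the Hecke-equivariance of $\Za_D$ itself, which in turn comes from its theta-kernel construction earlier in the paper.

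It therefore remains to identify $\Hcusp/S_{2-2k}^!$ with $M_{2k}$ as Hecke modules. The operator $\xi_{2-2k}\colon \Hcusp\to S_{2k}$ is Hecke-equivariant and surjective with kernel $M_{2-2k}^!$. Since $S_{2-2k}^!$ sits inside $M_{2-2k}^!$ with codimension one (the constant term is the missing direction), passing to the quotient yields the short exact sequence
$$
0\to M_{2-2k}^!/S_{2-2k}^!\to \Hcusp/S_{2-2k}^!\xrightarrow{\xi_{2-2k}} S_{2k}\to 0,
$$
whose kernel is one-dimensional. A direct computation from the weight $2-2k$ Hecke formula shows that $T_n$ acts on the constant-term class by the scalar $\sigma_{1-2k}(n)=n^{1-2k}\sigma_{2k-1}(n)$, which, after the standard weight renormalisation, agrees with the Hecke eigenvalue of the weight $2k$ Eisenstein series.

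Because Eisenstein and cusp form Hecke eigenvalues are distinct in every weight $2k\geq 2$, the extension admits a unique Hecke-equivariant splitting, giving an isomorphism of Hecke modules
$$
\Hcusp/S_{2-2k}^!\;\cong\;\C\oplus S_{2k}\;\cong\;M_{2k},
$$
with the Eisenstein line matched to $\C$. Composing with the Kohnen isomorphism $M_{2k}\cong \MM_{k+\frac{1}{2}}$, which is a Hecke-module identification under the Shimura correspondence, completes the proof. The main obstacle is verifying that the induced Hecke action on the ``constant term'' line $M_{2-2k}^!/S_{2-2k}^!$ really matches the weight $2k$ Eisenstein eigenvalues under the normalisation of Hecke operators used throughout the paper, and that the Hecke-equivariance of $\Za_D$ is preserved on the quotient; once these are in place, splitting the extension and invoking Kohnen is essentially formal.
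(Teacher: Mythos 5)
Your proposal is correct, and its first half is exactly the paper's argument: transport the quotient along the Hecke-equivariant isomorphism $\Za_D\colon \Hcusp\to\SSperpD{D}$, observe that the kernel of the composite $\Hcusp\to\SSperpD{D}/\JD$ is $S_{2-2k}^!$, and reduce to identifying $\Hcusp/S_{2-2k}^!$ with $M_{2k}$ before invoking Kohnen. Where you genuinely diverge is in that middle identification. The paper simply cites Theorem 1.2 of \cite{BGKO}, which realizes $\Hcusp/S_{2-2k}^!\cong \Sperp/\DD^{2k-1}\left(S_{2-2k}^!\right)\cong M_{2k}$ via the \emph{holomorphic} differential operator $\DD^{2k-1}$; you instead use the \emph{antiholomorphic} operator $\xi_{2-2k}$, produce the extension of $S_{2k}$ by the one-dimensional line $M_{2-2k}^!/S_{2-2k}^!$, check that the Hecke action on that line is the Eisenstein system $\sigma_{2k-1}(p)$ under the renormalization \eqref{eqn:Heckeint}, and split the extension by separating generalized eigenspaces. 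This is a valid and pleasantly self-contained substitute for the citation (the splitting exists and is unique precisely because the Eisenstein system differs from every cuspidal system for $k>1$). Two small caveats: $\xi_{2-2k}$ is conjugate-linear, so your ``short exact sequence'' has an antilinear final arrow; since all Hecke eigenvalues involved are real this does not affect the conclusion that the semisimplifications agree, but it deserves a sentence. Also, the Hecke equivariance of $\Za_D$ does not come from a theta-kernel construction in this paper --- it is Lemma \ref{lem:Hecke}, proved by comparing principal parts and using that $\SSperp$ is determined by principal parts together with \eqref{eqn:TnPet}; and the Hecke stability of $\JD$ (needed for the quotient to be a Hecke module, Lemma \ref{lem:hecke}) follows from that equivariance plus the Hecke stability of $S_{2-2k}^!$. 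With those points made explicit, your argument is complete.
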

As mentioned in the discussion preceding Theorem \ref{thm:HeckeD}, there is interest in determining a natural operator on weight $\frac{3}{2}-k$ harmonic weak Maass forms which complements $\xi_{\frac{3}{2}-k}$.  Paralleling the integral weight operator $\DD^{2k-1}$, this new operator should map $\HHcusp$ to $\SSperp$.  Theorem \ref{thm:Zaharmonic} leads to such a natural Hecke equivariant isomorphism defined by
\begin{equation}\label{eqn:Shimura}
\Der:=\Za_D\circ \Za_d^{-1}:\HHcusp \overset{\sim}{\longrightarrow} \SSperpD{D}.
\end{equation}
We shall see that $\Der$ parallels $\DD^{2k-1}$ in a number of ways, leading us to call $\Der$ the \begin{it}$(d,D)$-th fractional derivative (of weight $k-\frac{1}{2}$).\end{it}  

To describe another important property of $\DD^{2k-1}$, we note that on Fourier expansions 
$$
\DD^{2k-1}\left(\sum_{n\geq 1} a(n)q^n\right) = \sum_{n\geq 1} n^{2k-1}a(n) q^n.
$$
This link is important for arithmetic applications.  Moreover, uniquely define $\Fe_m\in \Hcusp$ ($m\in \N$) by $\Fe_m(\tau)=q^{-m}+O(1)$ and $g_m\in \Sperp$ by $g_m(\tau)=q^{-m}+O(q)$.  The coefficients of these functions satisfy a so-called Zagier duality \cite{ZagierTraces}.  This duality may be proven using the $\DD^{2k-1}$-operator via the identity \cite{GuerzhoyGrids}
\begin{equation}\label{eqn:DDduality}
\DD^{2k-1}\left(\Fe_m\right) = -m^{2k-1} g_m.
\end{equation}

To explain the statement of duality, uniquely define $\GG_d\in \HHcusp$ by $\GG_d(\tau)=q^{-|d|}+O(1)$ and $\widetilde{h}_D\in \SSperp$ by $\widetilde{h}_D(\tau)=q^{-|D|}+O(q)$.  Denote the $D$-th (resp. $d$-th) coefficient of $\GG_d$ (resp. $\widetilde{h}_D$) by $a(d,D)$ (resp. $c(D,d)$).  Generalizing Zagier's original duality statement \cite{ZagierTraces}, the first author and Ono \cite{BringmannOnoMathAnn} proved that
\begin{equation}\label{eqn:Zagduality}
a(d,D)=-c(D,d).
\end{equation}
Denote the $d$-th coefficient of $h_D:=\Der\left(\GG_d\right)$ by $b(D,d)$.  Paralleling \eqref{eqn:DDduality}, the coefficients of $\GG_d$ and $h_D$ satisfy duality.
\begin{theorem}\label{thm:Derduality}
The fractional derivatives $\Der$ are Hecke equivariant isomorphisms from $\HHcusp$ to $\SSperpD{D}$.  Moreover, one has 
\begin{equation}\label{eqn:Derduality}
a(d,D)=-b(D,d).
\end{equation}
\end{theorem}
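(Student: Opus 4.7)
The first assertion of the theorem, that $\Der$ is a Hecke equivariant isomorphism from $\HHcusp$ to $\SSperpD{D}$, is essentially immediate from the definition $\Der = \Za_D \circ \Za_d^{-1}$ together with Theorem \ref{thm:Zaharmonic}, which states that both $\Za_d : \Hcusp \overset{\sim}{\to} \HHcusp$ and $\Za_D : \Hcusp \overset{\sim}{\to} \SSperpD{D}$ are isomorphisms. Hecke equivariance of $\Der$ is inherited from the Hecke equivariance of the individual Zagier lifts, which will have been verified during their construction earlier in the paper.

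For the duality \eqref{eqn:Derduality}, the plan is to leverage the previously established duality \eqref{eqn:Zagduality}, namely $a(d,D) = -c(D,d)$, where $c(D,d)$ is the $d$-th Fourier coefficient of $\widetilde{h}_D$. Once this is in hand, it suffices to prove the identity $b(D,d) = c(D,d)$. A clean way to achieve this is to prove the stronger statement $h_D = \widetilde{h}_D$. Since $\SSperp$ contains no cusp forms (being the orthogonal complement to cusp forms with vanishing constant term), any element of $\SSperpD{D}$ is uniquely determined by its principal part, which is supported in the square class $-|D|m^2$, $m \in \N_0$. Thus it is enough to show that the principal part of $h_D$ is exactly $q^{-|D|}$.

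To compute this principal part, set $\calM_d := \Za_d^{-1}(\GG_d) \in \Hcusp$, so that $h_D = \Za_D(\calM_d)$. The principal part of $\calM_d$ is pinned down by the requirement that $\Za_d(\calM_d)$ have holomorphic principal part $q^{-|d|}$. Using the explicit Fourier expansions of both Zagier lifts computed earlier in the paper, one can then read off the principal part of $\Za_D(\calM_d)$. The parallel construction of $\Za_d$ and $\Za_D$ (both arising from the same intermediate harmonic weak Maass form) ensures that the $m = 1$ term, which governs $q^{-|d|}$ on the $\Za_d$ side, governs $q^{-|D|}$ on the $\Za_D$ side. This should yield principal part $q^{-|D|}$ for $h_D$ and hence the desired identification $h_D = \widetilde{h}_D$.

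The main technical obstacle is the bookkeeping of the non-holomorphic part of $\calM_d$: since $\calM_d$ is not holomorphic, its non-holomorphic piece can in principle contribute to the holomorphic principal part of $\Za_D(\calM_d)$, and a coefficient-by-coefficient check is required to confirm that no spurious principal-part terms appear beyond $q^{-|D|}$. The symmetric roles of $d$ and $D$ in the construction of the Zagier lifts is what makes these cancellations work out, and once verified, the duality \eqref{eqn:Derduality} follows directly from the chain $a(d,D) = -c(D,d) = -b(D,d)$.
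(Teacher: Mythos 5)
Your proposal is correct, and the first half (isomorphism and Hecke equivariance from Theorem \ref{thm:Zaharmonic} and the Hecke equivariance of the individual Zagier lifts) is exactly what the paper does. For the duality \eqref{eqn:Derduality}, however, you take a genuinely different route. The paper first identifies $\GG_d=F_{|d|,\frac{3}{2}-k}=\Za_d\left(F_{1,2-2k}\right)$ and $h_D=\Za_D\left(F_{1,2-2k}\right)=P_{-|D|,k+\frac{1}{2}}$ via \eqref{eqn:ZadPoinc} and \eqref{eqn:ZaDPoinc}, and then applies the Bruinier--Funke pairing: since $h_D$ is orthogonal to cusp forms with vanishing constant term and $\xi_{\frac{3}{2}-k}\left(\GG_d\right)$ is a cusp form, one gets $\left\{P_{-|D|,k+\frac{1}{2}},F_{|d|,\frac{3}{2}-k}\right\}=0$, while \eqref{eqn:pairingval} evaluates this pairing as $\frac{1}{6}\left(a(d,D)+b(D,d)\right)$; the duality drops out with no appeal to \eqref{eqn:Zagduality}. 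You instead deduce \eqref{eqn:Derduality} from the earlier Bringmann--Ono duality $a(d,D)=-c(D,d)$ by proving $h_D=\widetilde{h}_D$ --- precisely the alternate argument the paper's own remark after the theorem alludes to. Your route is valid: $\Za_d^{-1}\left(\GG_d\right)=F_{1,2-2k}$ by uniqueness of principal parts, and then $h_D=\Za_D\left(F_{1,2-2k}\right)$ has principal part $q^{-|D|}$, forcing $h_D=\widetilde{h}_D$ since elements of $\SSperpD{D}$ are determined by their principal parts. The trade-off is that your argument imports \eqref{eqn:Zagduality} as an external input (but yields $h_D=\widetilde{h}_D$ as a bonus), whereas the paper's pairing argument is self-contained. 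One small correction: the ``main technical obstacle'' you flag is not actually there --- by the very definition \eqref{eqn:ZaDdef}, the principal part of $\Za_D(\cM)$ depends only on the coefficients $c_{\cM}^+(-m)$, $m>0$, of the principal part of $\cM$, so the non-holomorphic part of $\Za_d^{-1}\left(\GG_d\right)$ cannot contribute spurious principal-part terms and no coefficient-by-coefficient check is needed.
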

\begin{remarks}
\noindent

\noindent
\begin{enumerate}
\item
By showing that $h_D=\widetilde{h}_D$, one could use the maps $\Der$ to give an alternate proof of the duality \eqref{eqn:Derduality}.
\item
Although one cannot get all of the coefficients of $h_D$ from a single fractional derivative, they can be obtained by taking $\Der$ for all $d$.  In order to do so, one must further extend the definition of the Zagier lifts to non-fundamental discriminants, which can be achieved by using the Hecke operators.  See \eqref{eqn:shingen} for more details.
\end{enumerate}
\end{remarks}

The paper is organized as follows.  The basic properties of harmonic weak Maass forms are recalled in Section \ref{sec:Maass}.  The definitions of the Zagier lifts are the main goal of Section \ref{sec:Zagier}.  In Section \ref{sec:Poincare}, a well-known set of Poincar\'e series, which span the space of harmonic weak Maass forms, is constructed.  The relationship between the Zagier and Shintani lifts is discussed in Section \ref{sec:ZagShin}.   The modularity properties of the image of the Zagier lifts are determined in Section \ref{sec:ortho}.  We define the generalized Shintani lift for weakly holomorphic modular forms in Section \ref{sec:weakShin} and investigate Hecke eigenforms for half-integral weight weakly holomorphic modular forms in Section \ref{sec:Hecke}.  Finally, we investigate the fractional derivative $\Der$ in Section \ref{sec:halfderiv}.

\section*{Acknowledgements}
The authors thank Zagier for assistance with computer calculations related to Theorem \ref{thm:Zaddef2}.  We also thank the referees for carefully reading the paper and for their comments.  Finally, the authors thank SoYoung Choi for pointing out a correction to the formula \eqref{eqn:shindef} after the article's publication.

\section{Harmonic weak Maass forms}\label{sec:Maass}
Here we recall the basic facts about the theory of harmonic weak Maass forms.  For a good reference on this theory, see \cite{BruinierFunke}.  Throughout, we write $\tau \in \H$ as  $\tau=x+iy$ with $x, y\in \R$, $y>0$ and
let $\wt\in \frac{1}{2}\Z$.  We define the weight $\wt$ \textit{hyperbolic Laplacian} by
\begin{equation*}
\Delta_{\wt} := -y^2\left( \frac{\partial^2}{\partial x^2}+
\frac{\partial^2}{\partial y^2}\right) + i\wt y\left(
\frac{\partial}{\partial x}+i \frac{\partial}{\partial y}\right).
\end{equation*}

For notational ease, we set
\begin{equation}\label{eqn:Gammadef}
\Gamma:=
\begin{cases} 
\SL_2(\Z)&\text{if }\wt\in \Z,\\
\Gamma_0(4)&\text{if }\wt\in \frac{1}{2}\Z\setminus \Z.\end{cases}
\end{equation}
 For $\gamma=\left(\begin{smallmatrix}a&b\\c&d\end{smallmatrix} \right)\in \Gamma$ and any function $g:\H\to \C$, we define
\begin{equation*}\label{slash}
g\big|_{\wt}\gamma (\tau):=  j(\gamma,\tau)^{-2\wt} g \left( \frac{a\tau +b}{c\tau+d} \right),
\end{equation*}
where
\begin{equation*}\label{eqn:j}
j(\gamma, \tau):=
\begin{cases}\sqrt{c\tau+d} &\text{if } \wt \in \Z,\\ 
\left(\frac{c}{d}\right) \varepsilon_d^{-1} \sqrt{c\tau+d} & \text{if } \wt \in \frac{1}{2}\Z\setminus \Z.
\end{cases}
\end{equation*}
Here $\left(\frac{c}{d}\right)$ is the usual Kronecker-Jacobi symbol and for $d\equiv 1\pmod{4}$ we denote $\varepsilon_d=1$, while for $d\equiv 3\pmod{4}$ we have $\varepsilon_d=i$.
\begin{definition}
A {\it{weak Maass form}} of weight $\wt$ and Laplace eigenvalue $\lambda$ is a smooth function $\cM:\H\to\C$ satisfying:
\begin{enumerate}
\item[(i)] $\cM\big|_{\wt} \gamma = \cM$ for all $\gamma \in \Gamma$,
\item[(ii)] $\Delta_{\wt}\left(\cM\right)=\lambda \cM $,
\item[(iii)] $\cM$ has at most linear exponential growth at each cusp of $\Gamma$.
\end{enumerate}
If $\lambda=0$, then we call $\cM$ \begin{it}harmonic.\end{it}
\end{definition}

The operator $\xi_{\wt}=2iy^{\wt} \overline{\frac{\partial}{\partial \overline{\tau}}}$ maps harmonic weak Maass forms of weight $\wt$ to weakly holomorphic modular forms of weight $2-\wt$ and plays a fundamental role in the theory of harmonic weak Maass forms.  Its kernel is precisely the subspace $M_{\wt}^!$ consisting of weight $\wt$ weakly holomorphic modular forms.  We denote the space of weight $\wt$ harmonic weak Maass forms which map to cusp forms under $\xi_{\wt}$ by $\Hcuspwt$.  Bruinier and Funke \cite{BruinierFunke} have shown that the map $\xi_{\wt}$ is surjective onto $S_{2-\wt}$.  For $\wt \le \frac{1}{2}$, $\cM\in \Hcuspwt$ has a Fourier expansion of the shape
\begin{equation}\label{eqn:fourier}
\cM(\tau) =\frac{1}{\Gamma(1-\wt)}\sum_{\begin{subarray}{c}n<0 \end{subarray}} c_{\cM}^-(n) \Gamma\left(1-\wt; 4\pi |n| y \right)q^n + \sum_{n \gg -\infty} c_{\cM}^+(n) q^n,
\end{equation}
where, for $y>0$, $\Gamma\left(s;y\right):=\int_{y}^{\infty} t^{s-1} e^{-t}dt$ is the \begin{it}incomplete $\Gamma$-function\end{it}.  We call $\cM^+(\tau) := \sum_{n\gg -\infty} c_{\cM}^+(n)q^n$ the {\it{holomorphic part}} of $\cM$ and $\cM^-:=\cM- \cM^+$ the {\it non-holomorphic part} of $\cM$.  The sum $\sum_{n< 0} c_{\cM}^+(n)q^n$  is called the \begin{it}principal part\end{it} of $\cM$.  The action of the $\xi$-operator on the Fourier expansion \eqref{eqn:fourier} is given by
\begin{equation}\label{eqn:xifourier}
\xi_{\wt}\left(\cM(\tau)\right)=\frac{1}{\Gamma(1-\wt)}\sum_{n>0} \left(4\pi n\right)^{1-\wt} \overline{c_{\cM}^{-}(-n)}q^{n}.
\end{equation}

Whenever $\wt$ is a negative integer, the operator $\DD^{1-\wt}=\left(\frac{1}{2\pi i} \frac{\partial}{\partial \tau}\right)^{1-\wt}$ injects $\Hcuspwt$ into $S_{2-\wt}^!,$ the subspace of $M_{2-\wt}^!$ consisting of those forms with vanishing constant terms.  As in the introduction, we let $\HHcuspwt\subset \Hcuspwt$ denote the subspace of those half-integral weight harmonic weak Maass forms satisfying the plus space conditions of Kohnen, i.e., those forms whose Fourier expansions are supported only on those $n\in \Z$ for which $(-1)^{\wt-\frac{1}{2}} n\equiv 0,1\pmod{4}$.  Furthermore, we let $\MM_{\wt}^!$ denote the subspace of $\HHcuspwt$ consisting of weakly holomorphic modular forms and $\SS_{\wt}^!\subseteq \MM_{\wt}^!$ denote the subspace precisely containing those weakly holomorphic modular forms for which the constant term of the Fourier expansion vanishes at every cusp.  We decompose $\HHcuspwt$ (resp. $\MM_{\wt}^!$) into the subspaces $\HH_{\wt,\delta}^{\text{cusp}}$ (resp. $\MM_{\wt,\delta}^!$) consisting of those forms whose principal parts are supported in the $-|\delta|m^2$ square class ($m\in \N$).  We use $\pr$ to denote the usual extension of Kohnen's orthogonal projection operator \cite{KohnenFourier} to $\Hcuspwt$.

Since the Hecke operators commute, we only give the $p$-th (resp. $p^2$-th Hecke operator) if $\wt\in \Z$ (resp. $\wt\in \frac{1}{2}\Z$).  For a translation invariant function $f(\tau) = \sum_{n\in \Z} c(y;n) e^{2\pi i nx}$, $\wt\in \Z$, and a prime $p$, the $p$-th \begin{it}Hecke operator\end{it} is defined by
\begin{equation}\label{eqn:Heckeint}
f\big|_{\wt}T(p)(\tau):= p^{\alpha_{\wt}}\sum_{n\in \Z} \left(c\left(\frac{y}{p};np\right)+p^{\wt-1} c\left(py;\frac{n}{p}\right)\right)e^{2\pi i n x}.
\end{equation}
Here we have renormalized the classical Hecke operators with the $p$-power
$$
\alpha_{\wt}:=
\begin{cases}
1-\wt & \text{if }\wt<0,\\
0 &\text{otherwise},
\end{cases}
$$
so that the operators in this paper commute with the Hecke operators.  We omit $\wt$ when it is clear from the context.  Similarly, for $\wt=\lambda+\frac{1}{2}\in \frac{1}{2}\Z\setminus\Z$, the \begin{it}$p^2$-th Hecke operator\end{it} is defined by
\begin{multline}\label{eqn:Heckedef}
f\big|_{\wt} T\left(p^2\right)(\tau):=p^{2\alpha_{\wt}}\sum_{n\in \Z}\left(c\left(\frac{y}{p^2};np^2\right)+\left(\frac{(-1)^{\lambda}n}{p}\right)p^{\lambda-1}c(y;n)+ p^{2\lambda-1}c\left(p^2y;\frac{n}{p^2}\right)\right) e^{2\pi i nx}.
\end{multline}
For $\wt\in \frac{1}{2}\Z$, $\mathcal{M}\in \Hcuspwt$, and a prime $p$, Theorem 7.10 of \cite{OnoUnearth} easily implies that
\begin{equation}\label{eqn:xiequiv}
\xi_{\wt}\left(\mathcal{M}\right)\big|_{\wt}T(p)=\xi_{\wt}\left(\mathcal{M}\big|_{2-\wt}T(p)\right).
\end{equation}

In addition to $\xi_{\wt}$, we require an additional operator which maps between different spaces of weak Maass forms.  The \begin{it}Maass raising operator\end{it} is defined by
\begin{equation}\label{eqn:raisingdef}
R_{\wt}:=2i\frac{\partial}{\partial \tau} +\frac{\wt}{y}.
\end{equation}
If a function $f$ satisfies weight $\wt$ automorphicity and has eigenvalue $\lambda$ under $\Delta_{\wt}$, then $R_{\wt}(f)$ satisfies weight $\wt+2$ automorphicity and has eigenvalue $\lambda+\wt$ under $\Delta_{\wt}$.  Repeated iteration is given by
$$
R_{\wt}^n:=R_{\wt+2(n-1)}\circ\cdots\circ R_{\wt+2}\circ R_{\wt}.
$$

Since the usual Petersson inner product does not always converge, we require a regularized inner product to define it for weakly holomorphic modular forms.  Hence, for $\mathcal{T}>0$ we let
$$
\FF_{\mathcal{T}}(4):= \bigcup_{j=1}^{6} \gamma_j\FF_{\mathcal{T}},
$$
where
$$
\FF_{\mathcal{T}}:=\left\{\tau\in \H\ : \ |x|\leq \frac{1}{2},\ |\tau|\geq 1,\text{ and }y\leq T\right\},
$$
and $\gamma_1,\dots,\gamma_6$ are a chosen set of coset representatives for $\SL_2(\Z)/\Gamma_0(4)$.  Following Borcherds \cite{Borcherds}, for $g\in \MM_{\wt}$ and $h\in \MM_{\wt}^!$, we define
$$
(g,h)_{\reg}:= \lim_{\mathcal{T}\to\infty}\frac{1}{6}\int_{\FF_{\mathcal{T}}(4)} g(\tau)\overline{h(\tau)} y^{\wt} \frac{dx\; dy}{y^2}.
$$
In the classical case when $g\in \SS_{k+\frac{1}{2}}$ and $h\in \MM_{k+\frac{1}{2}}$, we obtain the usual Petersson inner product $\left(g,h\right)$ and we denote the standard Petersson norm by $\|g\|$.  Using this regularized inner product, we let $\Swtperp$ be the subspace of $S_{\wt}^!$ of those weakly holomorphic modular forms with vanishing constant term which are orthogonal to cusp forms.  The corresponding subspace of $\SS_{\wt}^!$ is $\SSwtperp$.  To exhibit another way that $\DD^{1-\wt}$ is complementary to $\xi_{\wt}$, Theorem 1.2 of \cite{BruinierOnoRhoades} states that $\DD^{1-\wt}$ yields an isomorphism from $\Hcuspwt$ to $\Swtperpdual$.  Vital to the construction of Hecke eigenforms is the fact that for  $g,h\in \SS_{\wt}^!$ and $n\in \N$, the regularized Petersson inner product satisfies 
\begin{equation}\label{eqn:TnPet}
\left(g\big| T\left(n^2\right),h\right)_{\reg} = \left(g,h\big| T\left(n^2\right)\right)_{\reg}.
\end{equation}

Note that for every $\wt\geq 2$ and $n\in \N$ (resp. every discriminant $n$ for which $(-1)^{\wt-\frac{1}{2}}n<0$) there exists precisely one element of $\Swtperp$ (resp. $\SSwtperp$) of the form 
\begin{equation}\label{eqn:growth}
q^{-|n|} + O(q).
\end{equation}
Existence follows by the construction of a weight $\wt$ harmonic weak Maass Poincar\'e series which turns out to be weakly holomorphic because there are no cusp forms of weight $2-\wt$.  Uniqueness follows from the fact that the difference between any two functions satisfying \eqref{eqn:growth} vanishes at $i\infty$ and is hence a cusp form.  However, the difference is also orthogonal to cusp forms, and thus is zero.

For $\cM\in \HHcuspwt$ and $g\in \MM_{2-\wt}^!$, recall the pairing \cite{BruinierFunke}
\begin{equation}\label{eqn:pairingdef}
\left\{ g,\cM\right\}:=\left(g,\xi_{\wt}\left(\cM\right)\right)_{\reg}.
\end{equation}
In this paper, we require a generalization to all harmonic weak Maass forms.  By Proposition 3.5 of \cite{BruinierFunke}, for $g(\tau)=\sum_{n\gg -\infty} a_g(n) q^{n}\in \MM_{2-\wt}^!$ and $\cM\in \Hcuspwt$, we have 
\begin{equation}\label{eqn:pairingval}
\left\{ g,\cM\right\} = \frac{1}{6}\sum_{n\in \Z} c_{\cM}^+(n)a_g(-n).
\end{equation}

\section{The Zagier lifts}\label{sec:Zagier}
In this section, we define the Zagier lifts $\Za_D:\Hcusp\to \SSperpD{D}$ and $\Za_d:\Hcusp\to \HHcusp$.  In order to describe the constant terms in their respective Fourier expansions, for a discriminant $\Delta$ we construct the $L$-series associated to the \begin{it}Dirichlet character\end{it} $\chi_{\Delta}(\cdot):=\left(\frac{\Delta}{\cdot}\right)$ by
$$
L_{\Delta}\left(s\right):=\sum_{n=1}^{\infty}\frac{\chi_{\Delta}(n)}{n^s},
$$
which converges absolutely for $\re\left(s\right)>1$ and has a meromorphic continuation to $\C$. 

As we shall see, the other coefficients are given as traces of binary quadratic forms.  In order to state these, for each discriminant $\Delta$ we first need some auxiliary definitions involving the set $\QQ_{\Delta}$ of binary quadratic forms of discriminant $\Delta$.  The group $\SL_2(\Z)$ acts on $\QQ_{\Delta}$ in the usual way.  For discriminants $D_1$ and $D_2$, the corresponding \begin{it}genus character\end{it} (pp. 59--62 of \cite{Siegel}) of a binary quadratic form $Q\left(X,Y\right)=\left[a,b,c\right]\left(X,Y\right):=aX^2+bXY+cY^2\in \QQ_{D_1D_2}$ is given by
\begin{equation}\label{eqn:genusdef}
\chi\left(Q\right):=\begin{cases} \chi_{D_1}\left(r\right) & \text{if }\left(a,b,c, D_1\right)=1\text{ and $Q$ represents $r$ with $\left(r,D_1\right)=1$,}\\ 0 & \text{if }\left(a,b,c,D_1\right)>1.\end{cases}
\end{equation}
Whenever $D_1D_2<0$, we let $\tau_Q$ be the unique root of $Q\left(\tau_Q,1\right)=0$ in $\H$ and choose $\omega_Q=1$ unless $Q$ is $\SL_2\left(\Z\right)$-equivalent to $\left[a,0,a\right]$ or $\left[a,a,a\right]$, in which case $\omega_Q=2$ or $3$, respectively.

For an $\SL_2(\Z)$-invariant function $F:\H\to \C$ and each pair of discriminants $D_1,D_2$ with $D_1D_2<0$, the \begin{it} $\left(D_1,D_2\right)$-th twisted trace\end{it} of $F$ is given by
\begin{equation}\label{eqn:Trdef}
\Tr_{D_1,D_2}\left(F\right):= \sum_{Q\in \SL_2(\Z)\backslash\QQ_{D_1D_2}}\omega_Q^{-1}\chi(Q)F\left(\tau_Q\right).
\end{equation}
For $(-1)^kD_1>0$ and $(-1)^kD_2<0$, we generalize \cite{DukeJenkins} to define the \begin{it}modified twisted traces\end{it} $\Tr_{D_1,D_2}^*$ of $\cM\in \Hcusp$ by
\begin{equation}\label{eqn:Tr*def}
\Tr_{D_1,D_2}^*\left(\cM\right):=(-1)^{\left\lfloor \frac{k+1}{2}\right\rfloor}\left(4\pi\right)^{1-k}\left|D_1\right|^{\frac{k-1}{2}}\left|D_2\right|^{-\frac{k}{2}}\Tr_{D_1,D_2}\left( R_{2-2k}^{k-1} \left(\cM\right)\right).
\end{equation}
Note that we have only defined $\Tr_{D_1,D_2}^*$ whenever $(-1)^kD_2<0$ and $(-1)^kD_1>0$.  Reversing the order of $D_1$ and $D_2$ (with the same restrictions on the sign), we extend this definition by 
$$
\Tr_{D_2,D_1}^*(\cM):=-\Tr_{D_1,D_2}^*(\cM).
$$ 

For $\cM\in \Hcusp$ with principal part $\sum_{m< 0} c_{\cM}^+(m) q^m$, we define the \begin{it}$D$-th Zagier lift\end{it} by
\begin{equation}\label{eqn:ZaDdef}
\Za_D\left(\cM\right)\left(\tau\right):=\sum_{m>0} c_{\cM}^+\left(-m\right) m^{2k-1}\sum_{n\mid m} \chi_D(n)n^{-k}q^{-\left(\frac{m}{n}\right)^2\left|D\right|}+\sum_{\delta: \delta D<0}\Tr_{\delta,D}^*\left(\cM\right)q^{\left|\delta\right|}.
\end{equation}


We next turn to the construction of the maps $\Za_d$.  For this, we need to extend our definition to include twisted traces for indefinite binary quadratic forms.  These traces are given in terms of cycle integrals which we now describe.  For a discriminant $\Delta>0$, an indefinite binary quadratic form $Q\in \QQ_{\Delta}$, and $(t,u)$ the smallest positive solution to the Pell equation $t^2- \Delta u^2=4$,
$$
g_Q:=\left(\begin{matrix}\frac{t+bu}{2}& cu\\ -au& \frac{t-bu}{2}\end{matrix}\right)
$$
generates the (infinite cyclic) group of automorphs of $Q$.  Let $S_Q$ be the oriented semicircle given by 
\begin{equation}\label{eqn:SQdef}
a\left|\tau\right|^2 + bx+c=0
\end{equation}
directed counterclockwise if $a>0$ and clockwise if $a<0$.  For $\tau\in S_Q$, we define the directed arc $C_Q$ to be the arc from $\tau$ to $g_Q\tau$ along $S_Q$.  It was shown in Lemma 6 of \cite{DIT} that the integral along $C_Q$ of an $\SL_2(\Z)$-invariant and continuous function is both independent of $\tau\in S_Q$ and is a class invariant.  

Following Shintani \cite{Shintani} (for an explicit statement, see (6) and (7) of \cite{KohnenFourier}), for each pair $\delta,d$ the \begin{it}$(\delta,d)$-th twisted trace\end{it} ($(-1)^k\delta>0$) of $f\in S_{2k}$ is 
\begin{equation}\label{eqn:shindcycle}
\Tr_{\delta,d}\left(f\right):=-\frac{(-1)^k2^{k-2}}{3\sqrt{\pi}}\sum_{Q\in \SL_2(\Z)\backslash \QQ_{d\delta}} \chi\left(Q\right)\CC\left(f;Q\right).
\end{equation}
For each pair $\delta,d$, the \begin{it}modified $\left(\delta,d\right)$-th twisted trace\end{it} is then given by
\begin{equation}\label{eqn:shind2}
\Tr_{\delta,d}^*\left(\cM\right):=(-1)^{\left\lfloor1-\frac{k}{2}\right\rfloor}
\left(4\pi\right)^{1-k}\left|d\right|^{\frac{k-1}{2}}\left|\delta\right|^{-\frac{k}{2}}\Tr_{\delta,d}\left(\xi_{2-2k}\left(\cM\right)\right).
\end{equation}
If $\cM\in \Hcusp$ has principal part $\sum_{m< 0} c_{\cM}^+(m)q^m$ and constant coefficient $c_{\cM}^+(0)$, then the \begin{it}$d$-th Zagier lift\end{it} of $\cM$ is defined by
\begin{multline}\label{eqn:Zadpp2}
\Za_d(\cM)(\tau):=\sum_{m>0} c_{\cM}^+\left(-m\right)\sum_{n\mid m} \chi_d(n)n^{k-1}q^{-\left(\frac{m}{n}\right)^2\left|d\right|} + \frac{1}{2}L_d(1-k)c_{\cM}^+(0) + \sum_{\delta: d\delta<0}\Tr_{\delta,d}^*\left(\cM\right)q^{|\delta|}\\
+\sum_{\delta: d\delta>0} \Tr_{\delta,d}^*\left(\cM\right)\Gamma\left(k-\frac{1}{2};4\pi |\delta|y\right) q^{-|\delta|}.
\end{multline}

One can use the principal part of $\cM$ to obtain alternative definitions of $\Za_D$ and $\Za_d$.  Recall that for every $D$ and $\cM$ there exists $\widetilde{\Za}_D(\cM)\in \SSperpD{D}$ which satisfies 
\begin{equation}\label{eqn:wideZaDdef}
\widetilde{\Za}_D\left(\cM\right)\left(\tau\right)=\sum_{m>0} c_{\cM}^+\left(-m\right) m^{2k-1}\sum_{n\mid m} \chi_D(n)n^{-k}q^{-\left(\frac{m}{n}\right)^2\left|D\right|}+O(q).
\end{equation}
Moreover, the difference of two such forms is both a cusp form and orthogonal to cusp forms.  By the non-degeneracy of the Petersson inner product, for each pair $D,\cM$ the form satisfying \eqref{eqn:wideZaDdef} is hence unique.  To obtain a similar definition for $\Za_d$, recall that for each pair $d,\cM$ there is a unique element $\widetilde{\Za}_d(\cM)\in\HHcusp$ of the form
\begin{equation}\label{eqn:Zadpp}
\widetilde{\Za}_d(\cM)(\tau)=\sum_{m>0} c_{\cM}^+\left(-m\right)\sum_{n\mid m} \chi_d(n)n^{k-1}q^{-\left(\frac{m}{n}\right)^2\left|d\right|} +O(1).
\end{equation}
\begin{remark}
The existence and uniqueness of the form satisfying the right hand side of  \eqref{eqn:Zadpp} follows from the theory of Poincar\'e series, which we recall in Section \ref{sec:Poincare}.
\end{remark}

A comparison of two different ways to compute the coefficients of this unique harmonic weak Maass form leads to the unexpected connection in Theorem \ref{thm:Zaddef2} between the twisted traces defined by the classical cycle integrals given in \eqref{eqn:shindcycle} and cycle integrals of weak Maass forms.  The latter are given by
$$
\widetilde{\Tr}_{\delta,d}^*\left(\cM\right):=(-1)^{\left\lfloor 1 -\frac{k}{2}\right\rfloor}
\left(4\pi\right)^{1-k}\left|d\right|^{\frac{k-1}{2}}\left|\delta\right|^{-\frac{k}{2}}\widetilde{\Tr}_{\delta,d}\left(R_{2-2k}^{k-1} \left(\cM\right)\right),
$$
where (in the case of convergence) for an $\SL_2(\Z)$-invariant function $F$,
\begin{equation}\label{eqn:wideTr}
\widetilde{\Tr}_{\delta,d}(F):=\frac{\Gamma\left(\frac{k+1}{2}\right)}{2\sqrt{\pi}\Gamma\left(\frac{k}{2}\right)}\sum_{Q\in \SL_2(\Z)\backslash\QQ_{\delta d}} \chi\left(Q\right)\CC\left(F;Q\right).
\end{equation}
\begin{remark}
In order to uniformly consider traces of positive and negative discriminant, whenever $D_1D_2<0$ we also denote 
$$
\widetilde{\Tr}_{D_1,D_2}(F):=\Tr_{D_1,D_2}\left(F\right),
$$
where $\Tr_{D_1,D_2}\left(F\right)$ was defined in \eqref{eqn:Trdef}.
\end{remark}
Theorem \ref{thm:Zaddef2} is equivalent to the next theorem, whose proof is deferred to Proposition \ref{prop:Zmodular}.
\begin{theorem}\label{thm:Zaddef}
If $\delta d>0$ is not a square and not both $\delta$ and $d$ are negative, then one has 
$$
c_{\Za_d(\cM)}^-\left(-\left|\delta\right|\right)= \widetilde{\Tr}_{\delta,d}^*\left(\cM\right).
$$
\end{theorem}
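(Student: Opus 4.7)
My approach is to realize $\Za_d(\cM)$ as a Poincar\'e-series construction, compute its non-holomorphic Fourier coefficients by unfolding, and identify them with cycle integrals of $R_{2-2k}^{k-1}(\cM)$. Because \eqref{eqn:Zadpp2} writes the same coefficients in terms of $\Tr_{\delta,d}^*(\cM)$, the cycle integrals of $\xi_{2-2k}(\cM)$, equating the two expressions yields Theorem \ref{thm:Zaddef}, which is equivalent to Theorem \ref{thm:Zaddef2}. By linearity of $\Za_d$ and the fact that $\Hcusp$ is spanned by Maass--Poincar\'e series (as will be recalled in Section \ref{sec:Poincare}), it suffices to verify the statement for $\cM$ a weight $2-2k$ basis Poincar\'e series with principal part a single $q^{-m}$. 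The uniqueness of $\widetilde{\Za}_d(\cM)\in\HHcusp$ with prescribed principal part \eqref{eqn:Zadpp} (the difference of two such forms would be both cuspidal and orthogonal to cusp forms, hence zero) ensures that any Poincar\'e averaging realizing the correct principal part must agree with $\Za_d(\cM)$.

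The plan for the main computation is to build $\Za_d(\cM)$ as a half-integral weight Kohnen-plus-space Maass--Poincar\'e series in $\HHcusp$ whose principal part matches the right-hand side of \eqref{eqn:Zadpp}, then extract the $(-|\delta|)$-th non-holomorphic Fourier coefficient by integrating against $e^{-2\pi i \delta x}$ over $x \in [0,1]$ and unfolding the coset sum against this integral. After unfolding and regrouping, the $\Gamma_0(4)$-coset sum collapses to a sum over $\SL_2(\Z)$-equivalence classes $Q \in \QQ_{\delta d}$, the genus character $\chi(Q)$ appearing from the Kohnen-style twist used in the Poincar\'e construction, and the inner integral becomes an integral over a closed geodesic $C_Q$. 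The crucial computation is that the kernel that survives is a weight-$0$ object: repeated Maass raising turns the weight $2-2k$ form $\cM$ into the weight $0$ function $R_{2-2k}^{k-1}(\cM)$, while the Whittaker/Bessel factor of the Poincar\'e series in weight $\tfrac{3}{2}-k$ becomes a factor of $Q(\tau,1)^{-1}$ on the geodesic. One thus recovers $\CC\left(R_{2-2k}^{k-1}(\cM);Q\right)$ up to an explicit constant.

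All that remains is matching constants: the incomplete Gamma function $\Gamma(k-\tfrac{1}{2}; 4\pi|\delta|y)$ in \eqref{eqn:Zadpp2} contributes a factor $\Gamma(k-\tfrac{1}{2})$; the Pell-equation period together with the iterated raising $R_{2-2k}^{k-1}$ contributes $(4\pi)^{1-k}|d|^{(k-1)/2}|\delta|^{-k/2}$ and the sign $(-1)^{\lfloor 1-k/2\rfloor}$; and Gamma-duplication converts the remaining factor into the normalization $\tfrac{\Gamma((k+1)/2)}{2\sqrt{\pi}\,\Gamma(k/2)}$ of \eqref{eqn:wideTr}, yielding exactly $\widetilde{\Tr}_{\delta,d}^*(\cM)$. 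The main technical obstacle is that the Poincar\'e series in weight $\tfrac{3}{2}-k$ does not converge absolutely in the classical range, so the unfolding must be justified either by Hecke-style analytic continuation in an auxiliary spectral parameter $s$ (starting in a half-plane of absolute convergence and continuing back to the target weight) or, equivalently, by invoking the regularized pairing $(\cdot,\cdot)_{\reg}$ of Section \ref{sec:Maass} together with \eqref{eqn:pairingval}. A secondary but delicate task is the explicit evaluation of $R_{2-2k}^{k-1}$ applied to the specific Whittaker function appearing in the Poincar\'e series, which is what ultimately produces the duplication-formula factors above and permits the identification with the cycle integrals $\CC(R_{2-2k}^{k-1}(\cM);Q)$.
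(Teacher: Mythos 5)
Your high-level target is right: reduce to the Poincar\'e series $F_{m,2-2k}$ spanning $\Hcusp$ and identify the non-holomorphic coefficients of the half-integral weight object with twisted traces of $R_{2-2k}^{k-1}(\cM)$. But there are two genuine gaps.

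First, your ``main computation'' asserts that extracting the $(-|\delta|)$-th Fourier coefficient of a weight $\frac{3}{2}-k$ Maass--Poincar\'e series by integrating over $x\in[0,1]$ and unfolding ``collapses to a sum over $\SL_2(\Z)\backslash\QQ_{\delta d}$'' with the genus character ``appearing from the Kohnen-style twist used in the Poincar\'e construction.'' The Poincar\'e series $P_{m,\frac{3}{2}-k}(s;\tau)$ carries no such twist: it is a plain average over $\Gamma_\infty\backslash\Gamma_0(4)$ (projected to the plus space), and the standard unfolding of the coefficient integral produces half-integral weight Kloosterman sums times Bessel functions (Lemma \ref{lem:Poinccoeff}), not geodesic cycle integrals. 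The passage from Kloosterman--Bessel sums to twisted traces over $\QQ_{\delta d}$ is the genuine substance here --- it rests on the evaluation of $K_{\frac{1}{2}}(d_1,(m/n)^2 d;c)$ as a Sali\'e-type sum over quadratic forms with fixed leading coefficient, which is how the genus character enters. The paper imports exactly this as Proposition 5 of \cite{DIT} (the formula \eqref{eqn:Trcoeff} for traces of Niebur Poincar\'e series) and then bridges the two sides via the Kloosterman sum symmetries \eqref{eqn:Kloosterman}, together with \eqref{eqn:Rlift} and \eqref{eqn:Niebur} to identify $R_{2-2k}^{k-1}(F_{m,2-2k})$ with $F_{-m}(k;\cdot)$. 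Your one-step ``direct unfolding'' conceals this identity rather than proving it.

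Second, and more seriously, the theorem is the assertion $\Tr^*_{\delta,d}(\cM)=\widetilde{\Tr}^*_{\delta,d}(\cM)$, i.e.\ that the Shintani cycle integrals of $\xi_{2-2k}(\cM)$ agree with the cycle integrals of $R_{2-2k}^{k-1}(\cM)$. Your argument, if completed, would show that the unique modular completion $\widetilde{\Za}_d(\cM)$ of the prescribed principal part has non-holomorphic coefficients $\widetilde{\Tr}^*_{\delta,d}(\cM)$. You then say that ``\eqref{eqn:Zadpp2} writes the same coefficients in terms of $\Tr^*_{\delta,d}(\cM)$'' --- but \eqref{eqn:Zadpp2} is a \emph{definition} of $\Za_d(\cM)$, and nothing in your plan shows that this defined object is modular, i.e.\ that $\Za_d(\cM)=\widetilde{\Za}_d(\cM)$. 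Your uniqueness argument only pins down $\widetilde{\Za}_d(\cM)$; treating the identity of its non-holomorphic part with the Shintani traces as definitional is circular. The paper closes this half of the argument with Theorem \ref{thm:tildeprop}, which identifies $\xi_{\frac{3}{2}-k}\bigl(\widetilde{\Za}_d(\cM)\bigr)$ with a multiple of $\shin_d\bigl(\xi_{2-2k}(\cM)\bigr)$ via the Bruinier--Funke pairing, the Petersson coefficient formula, and Kohnen's evaluation of Shintani lifts of Hecke eigenforms; only then does \eqref{eqn:xifourier} convert this into the statement that the non-holomorphic coefficients are the classical traces $\Tr^*_{\delta,d}(\cM)$. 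Without an argument of this kind your proof does not reach the claimed equality.
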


\section{Poincar\'e series}\label{sec:Poincare}
In this section, we describe an important family of Poincar\'e series with $\Gamma$ chosen as in \eqref{eqn:Gammadef}.  For an integer $m\neq 0$ and a function $\varphi:\R^{+}\to \C$ satisfying $\varphi(y)=O\left(y^\alpha\right)$ ($y\to 0$) for some $\alpha\in \R$, we set
$$
\varphi^{*}_m(\tau):=\varphi(y)e^{2\pi i mx}.
$$
Such functions are fixed by the translations $\Gamma_\infty:=\left\{ \pm\left( \begin{smallmatrix} 1&n\\0&1\end{smallmatrix}\right) :\ n\in \Z\right\}$.  
Given  this data, define the generic \begin{it}Poincar\'e series\end{it}
\begin{equation}\label{eqn:Poincgen}
\PP(m,\wt,\varphi;\tau):= \sum_{\gamma\in\Gamma_\infty \backslash\Gamma}\varphi^{*}_m \big|_{\wt} \gamma(\tau),
\end{equation}
which converge absolutely and uniformly for $\wt>2-2\alpha$.  

To define the Maass--Poincar\'e series, one specifically chooses
\begin{equation}\label{eqn:psidef}
\psi_{m,\wt}\left(s;y\right):=\left(4\pi \left|m\right|\right)^{\frac{\wt}{2}}\Gamma\left(2s\right)^{-1}\mathcal{M}_{\wt,s}\left(4\pi my\right).
\end{equation}
Here for complex $s$ and $u\neq 0$
\begin{equation*}
\mathcal{M}_{\wt,s}(u):= |u|^{-\frac{\wt}{2}}
M_{\frac{\wt}{2}\sgn(u),\,s-\frac{1}{2}}\left(|u|\right),
\end{equation*}
where   $M_{\nu,\,\mu}(u)$ is the usual $M$-Whittaker function.  In the special cases that $s=\frac{\wt}{2}$ or $s=1-\frac{\wt}{2}$ the resulting functions $\psi_{m,\wt}^*(s;\tau)$ are harmonic.  Specifically, for $u>0$ 
\begin{align*}
\mathcal{M}_{\wt,\frac{\wt}{2}}(-u)&= e^{\frac{u}{2}},\\
\mathcal{M}_{\wt,1-\frac{\wt}{2}}(-u)&=(\wt-1)e^{\frac{u}{2}}\Gamma\left(1-\wt;u\right)+(1-\wt)\Gamma(1-\wt)e^{\frac{u}{2}}.
\end{align*}
Using the notation in \eqref{eqn:Poincgen}, the family of Poincar\'e series which we require is defined by
\begin{equation}\label{eqn:Poincs}
P_{m,\wt}\left(s;\tau\right):=\begin{cases}
\PP\left(m,\wt,\psi_{m,\wt}\left(s;y\right);\tau\right) &\text{if }\wt\in \Z,\\
\frac{3}{2}\PP\left(m,\wt,\psi_{m,\wt}\left(s;y\right);\tau\right)\big|\pr &\text{if }\wt\in \frac{1}{2}\Z\setminus \Z.
\end{cases}
\end{equation}
These Poincar\'e series converge absolutely and uniformly whenever $\re\left(s\right)>1$.  For $\wt\geq 2$, the classical family \cite{IwaniecAut} of weakly holomorphic modular forms is defined by
\begin{equation}\label{eqn:PmClassical}
P_{m,\wt}\left(\tau\right):= \left(4\pi \left|m\right|\right)^{-\frac{\wt}{2}}\Gamma\left(1+\left|\wt-1\right|\right)P_{m,\wt}\left(\frac{\wt}{2};\tau\right).
\end{equation}
Similarly, the Poincar\'e series
\begin{equation}\label{eqn:Fmdef}
F_{m,2-\wt}\left(\tau\right):= \left(4\pi \left|m\right|\right)^{\frac{\wt}{2}-1}P_{-m,2-\wt}\left(\frac{\wt}{2};\tau\right)
\end{equation}
 are harmonic weak Maass forms (for example, see \cite{Fay} or \eqref{eqn:Deltaact} below) with principal part $q^{-m}$ whenever $m>0$.  
For notational consistency, we also denote the positive weight weakly holomorphic form by 
$$
F_{m,\wt}(\tau):=\left(4\pi \left|m\right|\right)^{\frac{\wt}{2}-1}P_{-m,\wt}\left(\frac{\wt}{2};\tau\right).
$$

In the following lemma, we collect the images of $P_{m,\wt}
\left(s;\tau\right)$ under the operators which are relevant for Maass forms.  
\begin{lemma}\label{lem:Poincops}
For $m\in \Z\setminus\{0\}$, $\wt\in \frac{1}{2}\Z$, and $s\in \C$ with $\re\left(s\right)>1$, we have 
\begin{eqnarray}
\label{eqn:Deltaact}
\Delta_{\wt}\left(P_{m,\wt}\left(s;\tau\right)\right) &=& \left(s-\frac{\wt}{2}\right)\left(1-s-\frac{\wt}{2}\right)P_{m,\wt}\left(s;\tau\right),\\
\label{eqn:xiact}
\xi_{\wt}\left(P_{m,\wt}\left(s;\tau\right)\right) &=& \left(\overline{s}-\frac{\wt}{2}\right)P_{-m,2-\wt}\left(\overline{s};\tau\right),\\
\label{eqn:lowerP}
L_{\wt}\left(P_{m,\wt}\left(s;\tau\right)\right) &=& \left(s-\frac{\wt}{2}\right)P_{m,\wt-2}\left(s;\tau\right),\\
\label{eqn:raiseP}
R_{\wt}\left(P_{m,\wt}\left(s;\tau\right)\right) &=& \left(s+\frac{\wt}{2}\right) P_{m,\wt+2}\left(s;\tau\right),
\end{eqnarray}
Moreover, if $2-\wt\in 2\N$, then 
\begin{eqnarray}
\label{eqn:ZadPoinc}
\widetilde{\Za}_d\left(F_{m,\wt}\right) &=&\sum_{n\mid m} \chi_d(n)n^{
-\frac{\wt}{2}}F_{\left(\frac{m}{n}\right)^{2}\left|d\right|,\frac{\wt+1}{2}},\\
\label{eqn:ZaDPoinc}
\widetilde{\Za}_D\left(F_{m,\wt}\right) &=& |m|^{1-\wt}\sum_{n\mid m} |n|^{\frac{\wt}{2}-1}F_{\left(\frac{m}{n}\right)^{2}\left|D\right|,\frac{3-\wt}{2}},\\
\label{eqn:DerP}
\Der\left(F_{|d|,\frac{\wt+1}{2}}\right)&=& P_{-|D|,\frac{3-\wt}{2}}.
\end{eqnarray}

\end{lemma}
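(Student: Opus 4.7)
My plan is to handle the two groups of identities separately: the Laplacian, $\xi$, raising, and lowering identities \eqref{eqn:Deltaact}--\eqref{eqn:raiseP} are local and reduce to Whittaker function computations on the seed, whereas the Zagier-lift identities \eqref{eqn:ZadPoinc}--\eqref{eqn:DerP} are combinatorial and reduce to matching principal parts via the uniqueness built into \eqref{eqn:wideZaDdef} and \eqref{eqn:Zadpp}.

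\textbf{First group.} Each of these operators has predictable covariance with the slash action: $\Delta_\wt$ commutes with $|_\wt \gamma$, while $\xi_\wt$, $R_\wt$, $L_\wt$ intertwine $|_\wt\gamma$ with $|_{2-\wt}\gamma$, $|_{\wt+2}\gamma$, $|_{\wt-2}\gamma$ respectively; Kohnen's projection $\pr$ is likewise compatible in the half-integral weight case. Consequently it suffices to verify each identity on the seed function $\psi_{m,\wt}^*(s;\tau) = \psi_{m,\wt}(s;y)\, e^{2\pi i m x}$ and then propagate the equality through the sum over $\Gamma_\infty \backslash \Gamma$. At the seed level, \eqref{eqn:Deltaact} is exactly the Whittaker differential equation satisfied by $M_{(\wt/2)\sgn(m),\, s-1/2}$, while \eqref{eqn:xiact}, \eqref{eqn:lowerP}, and \eqref{eqn:raiseP} follow from the standard contiguity relations expressing derivatives of $M_{\nu,\mu}$ as shifted $M$-functions. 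The normalization by $\Gamma(2s)^{-1}$ in the definition of $\psi_{m,\wt}$ produces exactly the advertised constants $\bar s - \wt/2$, $s - \wt/2$, and $s + \wt/2$.

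\textbf{Second group.} Since $F_{m,\wt}$ has principal part $q^{-m}$ for $m>0$ and $\wt = 2-2k$, substituting $c_\cM^+(-m) = 1$ into \eqref{eqn:wideZaDdef} shows that $\widetilde{\Za}_D(F_{m,\wt})$ is the \emph{unique} element of $\SSperpD{D}$ with principal part
$$
m^{2k-1}\sum_{n \mid m}\chi_D(n)\, n^{-k}\, q^{-(m/n)^{2}|D|}.
$$
To prove \eqref{eqn:ZaDPoinc} it then suffices to verify that the right-hand side has this same principal part, which is immediate from the definition \eqref{eqn:Fmdef} of $F_{N,(3-\wt)/2}$, and lies in $\SSperpD{D}$, which is built into the Poincaré series construction at the weakly holomorphic spectral value. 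The argument for \eqref{eqn:ZadPoinc} is entirely parallel, using the uniqueness statement for \eqref{eqn:Zadpp} in $\HHcusp$ rather than $\SSperpD{D}$. Finally \eqref{eqn:DerP} drops out by combining \eqref{eqn:ZadPoinc} and \eqref{eqn:ZaDPoinc} at $m = 1$ together with $\Der = \Za_D \circ \Za_d^{-1}$, then identifying $F_{|D|,(3-\wt)/2}$ with $P_{-|D|,(3-\wt)/2}$ via the normalization \eqref{eqn:PmClassical}.

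The main obstacle I anticipate is bookkeeping with normalization constants in the second group: the three closely related objects $F_{m,\wt}$, $P_{m,\wt}(s;\tau)$, and $P_{m,\wt}(\tau)$ differ by explicit $s$- and $m$-dependent scalars, and matching them against the factors $m^{2k-1}$, $\chi_D(n) n^{-k}$, and the $L$-value $L_d(1-k)$ appearing in the definitions of $\widetilde{\Za}_d$ and $\widetilde{\Za}_D$ requires careful attention. A secondary subtlety is the correct handling of the $\sgn$ in the definition of $\mathcal{M}_{\wt,s}$ when applying the Whittaker contiguity relations, since $\xi_\wt$ changes the sign of the frequency from $m$ to $-m$.
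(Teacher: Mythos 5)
Your overall strategy matches the paper's: the first four identities are verified by a seed-level computation with the standard contiguity and differential relations for the Whittaker functions (the paper quotes exactly these four relations and calls the rest a standard calculation), and \eqref{eqn:ZadPoinc}--\eqref{eqn:ZaDPoinc} are obtained by matching principal parts against the uniqueness built into \eqref{eqn:Zadpp} and \eqref{eqn:wideZaDdef}, with \eqref{eqn:DerP} following from the $m=1$ cases.

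The one genuine gap is in your justification of \eqref{eqn:ZaDPoinc}. For the uniqueness argument to apply you must know that the right-hand side lies in $\SSperpD{D}$, i.e.\ that the weight $\frac{3-\wt}{2}=k+\frac{1}{2}$ weakly holomorphic Poincar\'e series $F_{(m/n)^2|D|,k+\frac{1}{2}}$ are orthogonal to cusp forms for the regularized inner product. You assert this is ``built into the Poincar\'e series construction at the weakly holomorphic spectral value,'' but it is not: a positive-weight weakly holomorphic Poincar\'e series with a pole at $i\infty$ has no a priori reason to be orthogonal to cusp forms, and in this paper that orthogonality is a separate result (Proposition \ref{prop:orthog}), proved by writing $P_{-m,k+\frac{1}{2}}$ as a nonzero multiple of $\xi_{\frac{3}{2}-k}\bigl(F_{-m,\frac{3}{2}-k}\bigr)$ and running the Bruinier--Funke Stokes' theorem argument on $\FF_{\mathcal{T}}(4)$, where the surviving boundary term is shown to vanish as $\mathcal{T}\to\infty$ using the incomplete Gamma decay of the non-holomorphic part. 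The paper itself explicitly defers exactly this point when stating the lemma, so your proof as written assumes the hardest ingredient. Everything else in your outline is sound; for \eqref{eqn:DerP} you should additionally note that $\Der$ is defined through $\Za_d$ and $\Za_D$ rather than $\widetilde{\Za}_d$ and $\widetilde{\Za}_D$, so the identifications $\Za_d=\widetilde{\Za}_d$ and $\Za_D=\widetilde{\Za}_D$ (Propositions \ref{prop:Zmodular} and \ref{prop:orthog}) are also needed before the composition argument closes.
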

\begin{proof}
Except for the last three identities, the lemma follows by a standard calculation involving the following relations between Whittaker functions:
\begin{eqnarray*}
M_{\ell,s-\frac{1}{2}}(y)&=& e^{\pi i \ell}\frac{\Gamma(2s)}{\Gamma(s-\ell)}W_{-\ell,s-\frac{1}{2}}(-y) + e^{\pi i\left(\ell-s\right)}\frac{\Gamma(2s)}{\Gamma(s+\ell)}W_{\ell,s-\frac{1}{2}}(y),\\
W_{\ell,m}(y)&=& y^{\frac{1}{2}}W_{\ell-\frac{1}{2},m-\frac{1}{2}}(y) + \left(\frac{1}{2}-\ell+m\right)W_{\ell-1,m}(y),\\
W_{\ell,m}(y)&=& y^{\frac{1}{2}}W_{\ell-\frac{1}{2},m+\frac{1}{2}}(y) + \left(\frac{1}{2}-\ell-m\right)W_{\ell-1,m}(y),\\
y W_{\ell,m}'(y) &=& \left(\ell-\frac{y}{2}\right)W_{\ell,m}(y)-\left(m^2-\left(\ell-\frac{1}{2}\right)^2\right)W_{\ell-1,m}(y).
\end{eqnarray*}

By comparing principal parts, uniqueness implies \eqref{eqn:ZadPoinc}.  If the right hand side of \eqref{eqn:ZaDPoinc} were an element of $\SSperp$, then uniqueness would again imply \eqref{eqn:ZaDPoinc} by comparing the principal parts.  This turns out to be the case, but we defer the proof of its orthogonality to Proposition \ref{prop:orthog}.  The last identity shall be proven in \eqref{eqn:DerPoincmap}, but we include it here for completeness.
\end{proof}
For $\wt\in\frac{1}{2}\Z\setminus\Z$ and a discriminant $m$ satisfying $\sgn(m)=\sgn(\wt-1)$, we also require the Fourier expansion of $P_{m,\wt}\left(s;\tau\right)$ for the modularity of the lifts.  For this, we define the half-integral weight Kloosterman sums
$$
K_{\wt}\left(m,n;c\right):=2^{-\frac{1}{2}}\left(1+\left(\frac{4}{c}\right)\right)\left(1-\left(-1\right)^{\wt-\frac{1}{2}}i\right)\sum_{\nu\pmod{4c}^*} \left(\frac{4c}{\nu}\right)\varepsilon_{\nu}^{2\wt} e^{2\pi i \left(\frac{m\overline{\nu}+n\nu}{4c}\right)}. 
$$
Here the sum runs over  $\nu\pmod{4c}$ relatively prime to $4c$ and $\overline{\nu}$ denotes the inverse of $\nu \pmod{4c}$.  We also require the modified $W$-Whittaker function
\begin{equation}\label{eqn:Wdef}
\W_{n,\wt}\left(s;y\right):=\begin{cases} 
\Gamma\left(s+\sgn(n)\frac{\wt}{2}\right)^{-1} y^{-\frac{\wt}{2}} W_{\frac{\wt}{2}\sgn(n), s-\frac{1}{2}}\left(4\pi |n|y\right) &\text{if }n\neq 0,\\
\frac{y^{1-\frac{\wt}{2}-s}}{\left(2s-1\right)\Gamma\left(s-\frac{\wt}{2}\right)\Gamma\left(s+\frac{\wt}{2}\right)} &\text{if }n=0.\end{cases}
\end{equation}
The following special values of $\W_{n,\wt}\left(s;y\right)$ for $n\neq 0$ prove useful:
\begin{equation}\label{eqn:Wspecial}
\W_{n,\wt}\left(1-\frac{\wt}{2};y\right) = 
\begin{cases}
(4\pi n)^{\frac{\wt}{2}}e^{-2\pi n y}&\text{if }n>0,\\
\frac{\left(4\pi|n|\right)^{\frac{\wt}{2}}}{\Gamma\left(1-\wt\right)} e^{2\pi \left|n\right| y}\Gamma\left(1-\wt;4\pi \left|n\right|y\right) &\text{if }n<0.
\end{cases}
\end{equation}
Furthermore, 
\begin{equation}\label{eqn:Wspecial2}
\W_{n,\wt}\left(\frac{\wt}{2};y\right)=
\begin{cases}
\left(4\pi n\right)^{\frac{\wt}{2}}e^{-2\pi ny} & \text{if } n>0,\\
0 & \text{if } n\leq 0.
\end{cases}
\end{equation}
Calculations similar to those necessary to obtain the following lemma have been carried out in a number of places (for example, see \cite{Fay}).
\begin{lemma}\label{lem:Poinccoeff}
For $\wt\in \frac{1}{2}\Z\setminus\Z$, $\re(s)>1$, and a discriminant $m$ satisfying $\sgn(m)=\sgn(\wt-1)$, $P_{m,\wt}\left(\tau,s\right)$ has the Fourier expansion
\begin{equation}\label{eqn:PmFourier}
P_{m,\wt}\left(s;\tau\right)= \psi_{m,\wt}\left(s;y\right)e^{2\pi imx} + \sum_{n\equiv 0,1\pmod{4}}
b_{m,\wt}\left(s;n\right)\W_{n,\wt}\left(s;y\right)e^{2\pi i nx},
\end{equation}
where
\begin{equation}\label{eqn:bcoeff}
b_{m,\wt}\left(s;n\right):= 2\pi (-1)^{\left\lfloor\frac{2\wt+1}{4}\right\rfloor}\sum_{c>0} K_{\wt}\left(m,n;c\right)
\begin{cases} 
 \left|\frac{m}{n}\right|^{\frac{1}{2}}\cdot \frac{1}{4c} I_{2s-1}\left(\frac{\pi\sqrt{|mn|}}{c}\right)&\text{if }mn<0,\\
  \left(\frac{m}{n}\right)^{\frac{1}{2}}\cdot  \frac{1}{4c} J_{2s-1}\left(\frac{\pi\sqrt{mn}}{c}\right)&\text{if }mn>0,\\
2\frac{\pi^{s}\left|m\right|^s}{(4c)^{2s}}&\text{if }n=0.
\end{cases}
\end{equation}
Here $I$ and $J$ denote the $I$-Bessel and $J$-Bessel functions.
\end{lemma}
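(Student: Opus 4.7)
The plan is to compute the Fourier expansion by the standard unfolding technique applied to the definition \eqref{eqn:Poincs}. Writing the Poincar\'e series as $\frac{3}{2}$ times a sum over $\gamma\in \Gamma_\infty\backslash \Gamma_0(4)$ of $\psi_{m,\wt}(s;y)e^{2\pi imx}$ slashed by $\gamma$ (then projected to the plus space), I first isolate the identity coset, which contributes exactly the leading term $\psi_{m,\wt}(s;y)e^{2\pi imx}$. The remaining cosets are parametrized by pairs $(c,d)$ with $4\mid c$, $c>0$, and $d\in (\Z/c\Z)^{\ast}$ (modulo the left action of $\Gamma_\infty$), and their contribution is what produces the non-trivial Fourier coefficients.

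To extract the $n$-th Fourier coefficient of the non-identity part, I would integrate against $e^{-2\pi inx}$ over $x\in[0,1]$ and unfold each $d$-sum into an integral over all of $\R$. For each fixed $c>0$, using the identity $\re\!\left(\frac{a\tau+b}{c\tau+d}\right)=\frac{a}{c}-\frac{cx+d}{c|c\tau+d|^2}$ separates the $a$- and $d$-dependence cleanly. The factor $e^{2\pi ima/c}$, the multiplier $\left(\frac{c}{d}\right)\ve_d^{-2\wt}$ coming from $j(\gamma,\tau)^{-2\wt}$, and the change of variable replacing $d$ by $\overline{d}\pmod{4c}$ assemble, after summation over $d$, into the half-integral weight Kloosterman sum $K_{\wt}(m,n;c)$, accompanied by the global sign $(-1)^{\lfloor(2\wt+1)/4\rfloor}$ that arises from the normalizing factor $\frac{1}{\sqrt{2}}\!\left(1-(-1)^{\wt-\frac{1}{2}}i\right)$ together with a sign produced by the $\mathcal{M}$-Whittaker function.

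The remaining integral over $x$, after the substitution $x\mapsto x-d/c$ and rescaling by $c$, reduces to a classical Whittaker integral with closed-form evaluation. For $mn<0$ it yields $\frac{1}{4c}I_{2s-1}\!\left(\frac{\pi\sqrt{|mn|}}{c}\right)$ multiplied by the Whittaker factor $\W_{n,\wt}(s;y)$; for $mn>0$ it yields $\frac{1}{4c}J_{2s-1}\!\left(\frac{\pi\sqrt{mn}}{c}\right)\W_{n,\wt}(s;y)$. The relative signs of $m$ and $n$ inside $\psi_{m,\wt}$ and $\W_{n,\wt}$ (encoded via $\sgn(\cdot)$ in \eqref{eqn:Wdef}) select the appropriate case. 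In the degenerate case $n=0$, the inner integral becomes a pure Mellin transform of $\mathcal{M}_{\wt,s}$, contributing $2\pi^s|m|^s/(4c)^{2s}$. Summing over $c$ gives \eqref{eqn:bcoeff}, Kohnen's projection restricts the expansion to $n\equiv 0,1\pmod 4$, and absolute convergence for $\re(s)>1$ follows from the Weil-type bound $K_{\wt}(m,n;c)\ll c^{1/2+\epsilon}$ combined with Bessel function asymptotics.

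The principal obstacle will be the careful bookkeeping of the half-integral weight multiplier system: correctly tracking the powers of $\ve_d$, the Kronecker symbol $\left(\frac{c}{d}\right)$, and the signs produced by the $M$-Whittaker function, so that the $d$-sum matches $K_\wt(m,n;c)$ exactly (including the factor $1+\left(\frac{4}{c}\right)$ which enforces $4\mid c$ among the non-vanishing terms). Once this is pinned down, the identification of the inner integral with $I$- or $J$-Bessel functions is classical, and the lemma follows by collecting terms.
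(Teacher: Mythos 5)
Your plan is the standard unfolding-plus-Kohnen-projection computation, which is exactly what the paper relies on here: it gives no proof of Lemma \ref{lem:Poinccoeff}, deferring to the literature (Fay, and the analogous calculation in \cite{BringmannOnoMathAnn}), so your route coincides with the intended one. One bookkeeping point to correct: the factor $1+\left(\frac{4}{c}\right)$ does not ``enforce $4\mid c$'' --- the condition $4\mid c$ on lower-left entries is already built into $\Gamma_0(4)$ and is absorbed when the modulus is rescaled to $4c$; rather, $1+\left(\frac{4}{c}\right)$ equals $2$ for odd $c$ and $1$ for even $c$, and this weighting (together with the $\frac{3}{2}$ in \eqref{eqn:Poincs}) is produced by applying the projection $\pr$, so you must carry the projection through the coefficient computation rather than treat it only as restricting to $n\equiv 0,1\pmod 4$.
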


Recall further that the coefficients at the cusps $0$ and $\frac{1}{2}$ can be written in terms of the coefficients at $i\infty$.  In particular, for a translation invariant function
$$
G(\tau):=\sum_{n\in \Z} a(n;y) e^{2\pi i nx}
$$
define
\begin{align*}
G^e(\tau)&:=\sum_{n\equiv 0\pmod{2}} a\left(n;\frac{y}{4}\right) e^{\frac{2\pi i n x}{4}},\\
G^o(\tau)&:=\sum_{n\equiv 1\pmod{2}} a\left(n;\frac{y}{4}\right)e^{\frac{2\pi i n}{8}} e^{\frac{2\pi i n x}{4}}.
\end{align*}
Then one has (see \cite{Kohnen} for the holomorphic case)
\begin{align}\label{eqn:cuspeven}
P_{m,\wt}\left(s;-\frac{1}{4\tau}\right) &= 2^{\frac{1}{2}-\wt}(-1)^{\left\lfloor \frac{2\wt+1}{4}\right\rfloor}\left(\frac{2\tau}{i}\right)^{\wt} P_{m,\wt}^{e}\left(s;\tau\right),\\
\label{eqn:cuspodd}
P_{m,\wt}\left(s;\frac{\tau}{2\tau+1}\right)&= 2^{\frac{1}{2}-\wt}(-1)^{\left\lfloor \frac{2\wt+1}{4}\right\rfloor}\left(\frac{2\tau+1}{i}\right)^{\wt}P_{m,\wt}^{o}(s;\tau).
\end{align}

For $m\in \Z\setminus\{0\}$, $\re\left(s\right)>1$, we define the \begin{it}Niebur Poincar\'e series\end{it} by
\begin{equation}\label{eqn:Ndef}
F_m\left(s;\tau\right):=\PP\left(m,0,\phi_{m,s};\tau\right)
\end{equation}
where 
$$
\phi_{m,s}\left(y\right):=2\pi \left|m\right|^{s-\frac{1}{2}} y^{\frac{1}{2}}I_{s-\frac{1}{2}}\left(2\pi \left|m\right|y\right).
$$
The functions $F_m$ are weight $0$ weak Maass forms with eigenvalue $s\left(1-s\right)$ under $\Delta=\Delta_0$ \cite{Niebur}.
 
For $\re\left(s\right)>1$, the Niebur Poincar\'e series are related to the family $P_{m,\wt}\left(s;\tau\right)$ by the identity (see (13.1.32) and (13.6.3) of \cite{AS})
$$
M_{0,s-\frac{1}{2}}\left(2y\right) = 2^{2s-\frac{1}{2}} \Gamma\left(s+\frac{1}{2}\right)y^{\frac{1}{2}}I_{s-\frac{1}{2}}\left(y\right). 
$$
We rewrite $y^{\frac{1}{2}}I_{s-\frac{1}{2}}\left(2\pi \left|m\right|y\right)$ in terms of $\psi_{m,0}\left(s;y\right)$ via \eqref{eqn:psidef} and use the duplication formula for the $\Gamma$-function to obtain
\begin{equation}\label{eqn:Niebur}
P_{m,0}\left(s;\tau\right) =\Gamma\left(s\right)^{-1}\left|m\right|^{1-s}F_{m}\left(s;\tau\right).
\end{equation}

\section{The Zagier lift and the Shintani lift}\label{sec:ZagShin}
In this section we prove a theorem relating the lifts $\widetilde{\Za}_d$ (defined in \eqref{eqn:Zadpp}) to the (classical) Shintani lifts.  It is useful to first show that (the alternative versions of) the Zagier lifts commute with the Hecke operators.  
\begin{lemma}\label{lem:Hecke}
If $\cM\in \Hcusp$, then
\begin{eqnarray}\label{eqn:ZagierHeckeD}
\widetilde{\Za}_D\left(\cM\right)\big|_{k+\frac{1}{2}} T\left(n^2\right) &=& \widetilde{\Za}_D\left(\cM\big|_{2-2k}T(n)\right),\\
\label{eqn:ZagierHecked}
\widetilde{\Za}_d\left(\cM\right)\big|_{\frac{3}{2}-k} T\left(n^2\right) &=& \widetilde{\Za}_d\left(\cM\big|_{2-2k}T(n)\right).
\end{eqnarray}
\end{lemma}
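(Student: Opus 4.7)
The plan is to exploit the uniqueness characterizations of the two Zagier lifts: by \eqref{eqn:wideZaDdef}, $\widetilde{\Za}_D(\cM)\in\SSperpD{D}$ is determined by its prescribed principal part, and by \eqref{eqn:Zadpp} (together with the Poincar\'e-series construction of Section \ref{sec:Poincare}), $\widetilde{\Za}_d(\cM)\in\HHcusp$ is likewise determined by its principal part. Since the Hecke operators commute, it suffices to treat a single prime $p$; each identity then reduces to an equality of principal parts.

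First I will check that the twisted sides $\widetilde{\Za}_D(\cM)\big|_{k+\frac{1}{2}}T(p^2)$ and $\widetilde{\Za}_d(\cM)\big|_{\frac{3}{2}-k}T(p^2)$ remain in $\SSperpD{D}$ and $\HHcusp$ respectively, so that the uniqueness characterizations apply on both sides. Kohnen's plus space is preserved by $T(p^2)$; the support of the principal part in the square class $-|\Delta|m^2$ is stable by inspection of \eqref{eqn:Heckedef}, since the three contributing indices $np^2$, $n$, and $n/p^2$ differ only by squares; \eqref{eqn:TnPet} gives preservation of orthogonality to cusp forms; and \eqref{eqn:xiequiv} keeps the image under $\xi_{\frac{3}{2}-k}$ cuspidal.

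The core step is the comparison of principal parts. Reindexing \eqref{eqn:wideZaDdef} by $t=m/n$, the coefficient of $q^{-t^2|D|}$ in $\widetilde{\Za}_D(\cM)$ equals $t^{2k-1}\sum_{n\ge1}\chi_D(n)n^{k-1}c_\cM^+(-nt)$. On one side of \eqref{eqn:ZagierHeckeD}, expanding $\widetilde{\Za}_D(\cM\big|_{2-2k}T(p))$ via \eqref{eqn:Heckeint} replaces each $c_\cM^+(-nt)$ by $p^{2k-1}c_\cM^+(-ntp)+c_\cM^+(-nt/p)$. On the other side, expanding $\widetilde{\Za}_D(\cM)\big|_{k+\frac{1}{2}}T(p^2)$ via \eqref{eqn:Heckedef} produces three summands coming from the coefficients of $\widetilde{\Za}_D(\cM)$ at the indices $-(tp)^2|D|$, $-t^2|D|$, and $-(t/p)^2|D|$. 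Using the sign identity $D=(-1)^{k+1}|D|$, the character factor in \eqref{eqn:Heckedef} at index $-t^2|D|$ simplifies to $\chi_D(p)\cdot\mathbf{1}_{p\nmid t}$. Matching then splits into two cases: if $p\mid t$ the character factor vanishes, while the third summand matches the $c_\cM^+(-nt/p)$ contribution directly; if $p\nmid t$ then $c_\cM^+(-nt/p)$ is nonzero only when $p\mid n$, and writing $n=pn'$ together with the complete multiplicativity $\chi_D(pn')=\chi_D(p)\chi_D(n')$ recovers the character term on the nose.

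The proof of \eqref{eqn:ZagierHecked} is entirely parallel: by \eqref{eqn:Zadpp}, the coefficient of $q^{-t^2|d|}$ in $\widetilde{\Za}_d(\cM)$ is $\sum_{n\ge 1}\chi_d(n)n^{k-1}c_\cM^+(-nt)$, and the same case analysis—now in the weight $\frac{3}{2}-k$ Hecke normalization of \eqref{eqn:Heckedef}, and using the sign identity $d=(-1)^k|d|$ to reduce the character factor to $\chi_d(p)\cdot\mathbf{1}_{p\nmid t}$—produces the matching of principal parts. The main obstacle is the careful bookkeeping in this case analysis (with a minor additional check at $p=2$ for the Kronecker-symbol conventions), together with the verification that the uniqueness statement for $\widetilde{\Za}_d$ in $\HHcusp$ really holds; the rest is conceptual and follows routinely from the uniqueness setup.
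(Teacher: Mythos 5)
Your proposal is correct and follows essentially the same route as the paper: both sides are pinned down by their principal parts (via orthogonality to cusp forms for $\widetilde{\Za}_D$ and via the negative weight for $\widetilde{\Za}_d$), the relevant spaces are checked to be Hecke stable, and the identity reduces to a formal matching of principal-part coefficients. The only difference is that the paper outsources that formal computation to \cite{DukeJenkins} ("mutatis mutandis"), whereas you carry it out explicitly; your case analysis in $p\mid t$ versus $p\nmid t$ and the evaluation of the character factor via $D=(-1)^{k+1}|D|$, $d=(-1)^k|d|$ are correct.
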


\begin{proof}
Since $\widetilde{\Za}_D\left(\cM\right)$ is orthogonal to cusp forms, it is uniquely determined by its principal parts.  Moreover, $\SSperp$ is preserved under the action of the Hecke operators by \eqref{eqn:TnPet}.  It hence suffices to show that the principal parts of the two sides of \eqref{eqn:ZagierHeckeD} and \eqref{eqn:ZagierHecked} match.  This calculation was carried out for $\cM\in M_{2-2k}^!$ \cite{DukeJenkins} and the proof follows mutatis mutandis for $\cM\in \Hcusp$.  
For \eqref{eqn:ZagierHecked}, it suffices to compare the principal parts of both sides, since $\frac{3}{2}-k<0$.  Since this is a formal calculation which is analogous to the computation given in \cite{DukeJenkins} we skip it here.
\end{proof}

The next theorem is one of the main steps in the proof of Theorem \ref{thm:Zaddef2}.
\begin{theorem}\label{thm:tildeprop}
For each $\cM\in \Hcusp$, we have
\begin{equation}\label{eqn:tildeprop}
\xi_{\frac{3}{2}-k}\left(\widetilde{\Za}_d(\cM)\right)=\frac{1}{3}(-1)^{\left\lfloor \frac{k}{2}\right\rfloor}2^{k-1}\shin_d\left(\xi_{2-2k}(\cM)\right).
\end{equation}
\end{theorem}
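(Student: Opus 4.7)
The plan is to verify the identity on the family of Maass--Poincar\'e series $F_{m,2-2k}$ ($m\ge 1$), which span $\Hcusp$ modulo the subspace $M_{2-2k}^!$, and then extend by linearity. Both sides of \eqref{eqn:tildeprop} are linear in $\cM$ and vanish on $M_{2-2k}^!$: the right side because $\xi_{2-2k}$ kills weakly holomorphic forms, and the left because the restriction of $\widetilde{\Za}_d$ to $M_{2-2k}^!$ is the classical Duke--Jenkins lift into $\MM_{k+\frac{1}{2}}^!$, which in turn lies in the kernel of $\xi_{\frac{3}{2}-k}$. Since the images $\xi_{2-2k}(F_{m,2-2k})$ span $S_{2k}$, any $\cM\in\Hcusp$ agrees modulo $M_{2-2k}^!$ with a linear combination of such Poincar\'e series, so it suffices to treat $\cM=F_{m,2-2k}$.

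For the left side, identity \eqref{eqn:ZadPoinc} of Lemma \ref{lem:Poincops} gives
\[
\widetilde{\Za}_d\bigl(F_{m,2-2k}\bigr)=\sum_{n\mid m}\chi_d(n)\,n^{k-1}\,F_{(m/n)^2|d|,\frac{3}{2}-k}.
\]
Applying $\xi_{\frac{3}{2}-k}$ termwise, the $\xi$-action formula \eqref{eqn:xiact} together with the normalizations \eqref{eqn:Fmdef} and \eqref{eqn:PmClassical} identifies each $\xi_{\frac{3}{2}-k}(F_{M,\frac{3}{2}-k})$ as a specific scalar multiple of the classical plus-space Poincar\'e series $P_{M,k+\frac{1}{2}}$. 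Thus the left side of \eqref{eqn:tildeprop} becomes an explicit linear combination of $P_{(m/n)^2|d|,k+\frac{1}{2}}$, weighted by $\chi_d(n) n^{k-1}$. On the right, the same method shows $\xi_{2-2k}(F_{m,2-2k})$ is a scalar multiple of $P_{m,2k}$, so it remains to evaluate $\shin_d(P_{m,2k})$ via \eqref{eqn:shindef}. I would expand $P_{m,2k}$ as the defining sum over $\Gamma_\infty\backslash\SL_2(\Z)$, interchange the $\gamma$-sum with the trace over $\SL_2(\Z)$-orbits of $Q\in\QQ_{\delta d}$, and unfold each cycle integral. Standard Bessel identities then produce the Kloosterman--Bessel expansion of Lemma \ref{lem:Poinccoeff} for the $|\delta|$-th Fourier coefficient of $P_{|\delta|,k+\frac{1}{2}}$, with the divisor sum $\sum_{n\mid m}\chi_d(n) n^{k-1}$ emerging from the parametrization of quadratic-form orbits by pairs $(n,m/n)$ with $(m/n)^2|d|=|\delta|$. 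This is the classical Kohnen--Zagier computation of Shintani lifts of Poincar\'e series.

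The chief technical obstacle will be constant bookkeeping. The cumulative factors from the $\frac{3}{2}$ in the plus-space projection \eqref{eqn:Poincs}, the $(4\pi|m|)^{\wt/2-1}$ and $\Gamma$-normalizations in \eqref{eqn:Fmdef} and \eqref{eqn:PmClassical}, the eigenvalues $s-\wt/2=k-\frac{1}{2}$ on the left and $s-\wt/2=2k-1$ on the right from \eqref{eqn:xiact}, and the Shintani normalization $-\frac{(-1)^k 2^{k-2}}{3\sqrt{\pi}}$ in \eqref{eqn:shindcycle} must combine to produce exactly $\frac{1}{3}(-1)^{\lfloor k/2\rfloor}2^{k-1}$. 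Legendre's duplication formula $\Gamma(2k-1)=\tfrac{2^{2k-2}}{\sqrt{\pi}}\Gamma(k-\tfrac{1}{2})\Gamma(k)$ reconciles the $\Gamma$-ratio, the plus-space $\frac{3}{2}$ cancels the $\frac{1}{3}$ in Shintani's normalization, and the sign $(-1)^{\lfloor k/2\rfloor}$ arises from careful tracking of the discriminant-sign conventions built into $\CC(\cdot;Q)$ and $R_{2-2k}^{k-1}$. Once these normalizations reconcile, the matching of Poincar\'e-series expansions completes the proof. As a secondary sanity check one may invoke the Hecke equivariance provided by Lemma \ref{lem:Hecke} together with the classical Hecke equivariance of $\shin_d$ to verify that both sides transform identically.
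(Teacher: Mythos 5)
Your strategy is genuinely different from the paper's, and in principle it is a viable one: the direct unfolding of $\shin_d$ applied to holomorphic Poincar\'e series is the classical Katok--Kohnen--Zagier computation. The paper instead reduces, via Hecke equivariance of $\widetilde{\Za}_d$, $\xi$, and $\shin_d$, to the single case $\cM=F_{1,2-2k}$, and then evaluates both sides against a Hecke eigenbasis: the left side via the Bruinier--Funke pairing $\{h_j,\widetilde{\Za}_d(F_{1,2-2k})\}=\frac16\overline{c_j(|d|)}$ (using only the principal part $q^{-|d|}$ of $F_{|d|,\frac32-k}$), and the right side via the Petersson coefficient formula for $P_{1,2k}$ together with Kohnen's explicit formula (Theorem 3 of \cite{KohnenFourier}) for $\shin_d(f_j)$. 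That route imports the hard analytic content from the literature and never touches a cycle integral; yours would reprove it. What your route buys is an identity valid for all $m$ at once and an independent derivation of the coefficient formula; what the paper's buys is a two-page proof.

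The gap is that the step carrying all of the content of the theorem is only named, not performed. The statement being proved \emph{is} the constant $\frac13(-1)^{\lfloor k/2\rfloor}2^{k-1}$, and your proposal defers exactly that: the unfolding of $\CC(P_{m,2k};Q)$ over $\SL_2(\Z)\backslash\QQ_{\delta d}$ produces Sali\'e sums and a Bessel integral, and one must then invoke the Sali\'e-to-half-integral-weight-Kloosterman identity to match Lemma \ref{lem:Poinccoeff}; none of this is ``standard Bessel identities,'' and the sign $(-1)^{\lfloor k/2\rfloor}$ and the power of $2$ cannot be recovered from the heuristic matching you describe. Two smaller points: (i) your claim that $\widetilde{\Za}_d$ sends $M_{2-2k}^!$ into weakly holomorphic forms (which should land in $\MM_{\frac32-k}^!$, not $\MM_{k+\frac12}^!$) needs the Duke--Jenkins construction plus the uniqueness of elements of $\HH_{\frac32-k,d}^{\mathrm{cusp}}$ with prescribed principal part -- it is true but is an input, not an observation; note also that since the $F_{m,2-2k}$ already span all of $\Hcusp$, the reduction modulo $M_{2-2k}^!$ is unnecessary. (ii) The unfolding must also handle the terms with $\delta d$ a perfect square in \eqref{eqn:shindef}, where the orbit structure of $\QQ_{\delta d}$ degenerates; the paper's pairing argument sidesteps this entirely.
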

\begin{proof}
By comparing principal parts, it is easy to show that 
\begin{equation}\label{eqn:FTn}
\left\{F_{1,2-2k}\big|_{2-2k}T(n)\Big|n\in \N\right\}
\end{equation}
spans $\Hcusp$.  Hence it suffices to prove \eqref{eqn:tildeprop} for elements of \eqref{eqn:FTn}.  We now use the Hecke equivariance of the relevant operators to prove that the $n=1$ case suffices.  The proof follows by induction on the number of divisors of $n$.  To see the induction step, assume that \eqref{eqn:tildeprop} holds for some $\cM\in \Hcusp$.  Recall that for $f\in S_{2k}$, one has the classical Hecke equivariance
\begin{equation}\label{eqn:shinequiv}
\shin_d\left(f\big|_{2k}T(p)\right) = \shin_d(f)\big|_{k+\frac{1}{2}}T\left(p^2\right).
\end{equation}
Then we may combine \eqref{eqn:shinequiv} with \eqref{eqn:xiequiv} and \eqref{eqn:ZagierHecked} to yield
\begin{align*}
\xi_{\frac{3}{2}-k}\left(\widetilde{\Za}_d\left(\cM\big|_{2-2k}T(p)\right)\right)&= \xi_{\frac{3}{2}-k}\left(\widetilde{\Za}_d(\cM)\right)\big|_{\frac{3}{2}-k}T\left(p^2\right)\\
&= \frac{1}{3}(-1)^{\left\lfloor \frac{k}{2}\right\rfloor}2^{k-1}\shin_d\left(\xi_{2-2k}(\cM)\right)\big|_{\frac{3}{2}-k}T\left(p^2\right)\\
&=\frac{1}{3}(-1)^{\left\lfloor \frac{k}{2}\right\rfloor}2^{k-1}\shin_d\left(\xi_{2-2k}\left(\cM\big|_{2-2k}T(p)\right)\right).
\end{align*}

We have hence reduced the problem to showing \eqref{eqn:tildeprop} for $\cM=F_{1,2-2k}$.  Suppose that $\left\{f_1,\dots, f_{\ell}\right\}$ $\left(\ell:=\dim\left(S_{2k}\right)\right)$ is a basis of Hecke eigenforms for $S_{2k}$ and $h_j(\tau):=\sum_{n=1}^{\infty} c_j\left(n\right)q^n\in \SS_{k+\frac{1}{2}}$ is a Hecke eigenform with the same eigenvalues as $f_j$ \cite{Kohnen}.  
We use the pairing \eqref{eqn:pairingdef}, given by 
$$
\left\{ h_j, \widetilde{\Za}_d\left(F_{1,2-2k}\right)\right\} = \left( h_j, \xi_{\frac{3}{2}-k}\left(\widetilde{\Za}_d\left(F_{1,2-2k}\right)\right)\right).
$$
By \eqref{eqn:ZadPoinc}, we have
$$
\widetilde{\Za}_d\left(F_{1,2-2k}\right) = F_{\left|d\right|,\frac{3}{2}-k},
$$
which has principal part $q^{-|d|}$.  Hence by \eqref{eqn:pairingval} and the fact that $h_j$ is a cusp form,
$$
\left(\xi_{\frac{3}{2}-k}\left(\widetilde{\Za}_d\left(F_{1,2-2k}\right)\right),h_j\right)=\overline{\left\{ h_j, F_{\left|d\right|,\frac{3}{2}-k}\right\}} = \frac{1}{6} \overline{c_{j}\left(|d|\right)}.
$$
Therefore, we have
\begin{equation}\label{eqn:shindHval}
\xi_{\frac{3}{2}-k}\left(\widetilde{\Za}_d\left(F_{1,2-2k}\right)\right) = \frac{1}{6} \sum_{j=1}^{\ell} \overline{c_{j}\left(|d|\right)} \frac{h_j}{\| h_j\|^2}.
\end{equation}

To compute the right hand side of \eqref{eqn:tildeprop}, we use \eqref{eqn:PmClassical}, \eqref{eqn:Fmdef}, and \eqref{eqn:xiact}, to first compute
\begin{equation}\label{eqn:xiFmval}
\xi_{2-2k}\left(F_{1,2-2k}(\tau)\right)=(4\pi)^{-k}\left(2k-1\right)P_{1,2k}\left(k;\tau\right)= \frac{\left(4\pi\right)^{2k-1}}{\left(2k-2\right)!} P_{1,2k}\left(\tau\right).
\end{equation}
We next rewrite $P_{1,2k}$ in terms of the Hecke eigenforms $f_j$.  Using the Petersson coefficient formula (cf. Theorem 3.3 of \cite{IwaniecAut}), we have that
$$
\left(f_j,P_{1,2k}\right) = \frac{\left(2k-2\right)!}{\left(4\pi\right)^{2k-1}}a_j(1),
$$
where $a_j(n)$ denotes the $n$th Fourier coefficient of $f_j$.  
Since $a_j(1)=1$ for every $j$, we therefore have
\begin{equation}\label{eqn:P1decompose}
P_{1,2k}= \frac{\left(2k-2\right)!}{\left(4\pi \right)^{2k-1}}\sum_{j=1}^{\ell}\frac{f_j}{\|f_j\|^2}
\end{equation}
By linearity, we only need to compute the image of each $f_j$ under $\shin_d$.  For this 
we use \eqref{eqn:shindef}
 and Theorem 3 of \cite{KohnenFourier} to evaluate
$$
\shin_{d}\left(f_j\right)=\left(-1\right)^{\left\lfloor \frac{k}{2}\right\rfloor}2^{-k}\|f_j\|^2 \overline{c_j(|d|)}\frac{h_j}{\|h_j\|^2}.
$$
Plugging this into \eqref{eqn:P1decompose} and using \eqref{eqn:xiFmval}
yields 
$$
\shin_d\left(\xi_{2-2k}\left(F_{1,2-2k}\right)\right)= \frac{\left(4\pi \right)^{2k-1}}{\left(2k-2\right)!}\shin_d\left(P_{1,2k}\right) = \left(-1\right)^{\left\lfloor \frac{k}{2}\right\rfloor}2^{-k} \sum_{j=1}^{\ell}\overline{c_j(|d|)}\frac{h_j}{\|h_j\|^2}.
$$
This completes the proof.

\end{proof}

\section{Proof of Theorems \ref{thm:Zaddef2} and \ref{thm:Zaharmonic}}\label{sec:ortho}
In this section, we prove Theorem \ref{thm:Zaharmonic} and Theorem \ref{thm:Zaddef2} (which we have rewritten as the equivalent statement in Theorem \ref{thm:Zaddef}).  We separate Theorem \ref{thm:Zaharmonic} into 3 propositions, one establishing the automorphicity of $\Za_D$, one yielding the orthogonality, and finally one showing that they are both bijections on the relevant spaces.  

\subsection{The constant term}
Before showing the modularity of the Zagier lifts, for $\cM\in \Hcusp$, we prove a useful lemma giving the constant term of $\widetilde{\Za}_d(\cM)$, defined in \eqref{eqn:Zadpp}.
\begin{lemma}\label{lem:const}
If $\cM\in \Hcusp$ has constant term $c_{\cM}^+(0)$ in the expansion \eqref{eqn:fourier}, then the constant term of $\widetilde{\Za}_d(\cM)$ is
\begin{equation}\label{eqn:const}
c_{\widetilde{\Za}_d(\cM)}^+(0) = \frac{1}{2}L_{d}(1-k)c_{\cM}^+(0).
\end{equation}
\end{lemma}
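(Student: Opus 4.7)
Both sides of \eqref{eqn:const} are $\C$-linear functionals on $\Hcusp$, so by linearity it suffices to verify the identity on a spanning set. A natural choice is the family of Poincar\'e series $\{F_{m,2-2k} : m \geq 1\}$ together with the weakly holomorphic Poincar\'e series spanning $M_{2-2k}^!$; every element of $\Hcusp$ is a linear combination of such forms. Using the Hecke equivariance \eqref{eqn:ZagierHecked}, it in fact suffices to treat a single $\cM = F_{m,2-2k}$ (the weakly holomorphic case being handled analogously via the Duke--Jenkins lift, whose contribution to the constant term is governed by the same Kloosterman sum identities).

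For $\cM = F_{m,2-2k}$, identity \eqref{eqn:ZadPoinc} gives
\[
\widetilde{\Za}_d(F_{m,2-2k}) = \sum_{n \mid m}\chi_d(n)\, n^{k-1}\, F_{(m/n)^2|d|,\frac{3}{2}-k},
\]
so the left-hand side of \eqref{eqn:const} becomes a weighted sum of the constant terms of the half-integral weight Poincar\'e series on the right, with $M := (m/n)^2|d|$. Each such constant term is the $n=0$ Fourier coefficient $b_{-M,\frac{3}{2}-k}(s;0)\, \W_{0,\frac{3}{2}-k}(s;y)$ from Lemma \ref{lem:Poinccoeff}, evaluated at the harmonic value $s = (2k+1)/4$; a direct check from \eqref{eqn:Wdef} shows that $\W_{0,\frac{3}{2}-k}$ is $y$-independent at this $s$, and \eqref{eqn:bcoeff} expresses the coefficient as an explicit series in the half-integral weight Kloosterman sums $K_{\frac{3}{2}-k}(-M,0;c)$. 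The right-hand side of \eqref{eqn:const} is computed in parallel from the integer-weight Fourier expansion of $F_{m,2-2k}$ in terms of the classical Kloosterman sums $K(-m,0;c)$.

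The identity \eqref{eqn:const} thus reduces to a Dirichlet series identity. Evaluating the half-integral weight Kloosterman sums $K_{\frac{3}{2}-k}(-(m/n)^2|d|,0;c)$ as Sali\'e sums introduces the quadratic character $\chi_d$ and allows the divisor sum $\sum_{n\mid m}\chi_d(n) n^{k-1}$ to combine multiplicatively with the Dirichlet series over $c$. One obtains the classical Kloosterman series for $m$ multiplied by the twisted $L$-value $L_d(k)$; the functional equation of $L_d$, or equivalently a direct Hurwitz zeta computation, identifies the overall factor with $\tfrac{1}{2} L_d(1-k)$. The main obstacle is this Sali\'e-sum evaluation together with the careful bookkeeping of normalizing constants from \eqref{eqn:Fmdef}, \eqref{eqn:PmClassical}, and \eqref{eqn:bcoeff}, so that the constant of proportionality comes out to be exactly $\tfrac{1}{2}$ rather than some other explicit algebraic multiple.
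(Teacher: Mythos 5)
Your reduction to $\cM=F_{m,2-2k}$ via \eqref{eqn:ZadPoinc} matches the paper's first step, but from there the two arguments diverge, and your version stops short of a proof. You propose to read off the constant term of each $F_{(m/n)^2|d|,\frac{3}{2}-k}$ from the $n=0$ term of the expansion in Lemma \ref{lem:Poinccoeff}, which requires evaluating the Gauss/Sali\'e sums $K_{\frac{3}{2}-k}\bigl(-(m/n)^2|d|,0;c\bigr)$, summing the resulting Kloosterman--Dirichlet series, applying the functional equation to pass from $L_d(k)$ to $L_d(1-k)$, and tracking every normalization (including the plus-space projection factor in \eqref{eqn:Poincs} and the constants in \eqref{eqn:Fmdef}, \eqref{eqn:bcoeff}) -- and then carrying out the parallel computation for the integral-weight constant term of $F_{m,2-2k}$. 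This route is viable in principle (it is essentially how Cohen's Eisenstein series coefficients are computed), and your observation that $\W_{0,\frac{3}{2}-k}(s;y)$ is $y$-independent at $s=\frac{2k+1}{4}$ is correct; but you explicitly defer the entire evaluation as ``the main obstacle,'' and since the lemma's whole content is the exact constant $\frac{1}{2}L_d(1-k)$, the decisive step is missing rather than merely routine.

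The paper circumvents this computation entirely with a duality argument. Pairing $\GG_\delta$ (the element of $\HHcusp$ with principal part $q^{-|\delta|}$) against the weight $k+\frac12$ Eisenstein series $\Eis$ via \eqref{eqn:pairingdef} gives zero, since $\Eis$ is orthogonal to cusp forms; by \eqref{eqn:pairingval} this forces the constant term of $\GG_\delta$ to be the negative of the $|\delta|$-th coefficient of $\Eis$, i.e.\ it extends the duality \eqref{eqn:Zagduality} to $D=0$. Cohen's explicit formula for those Eisenstein coefficients, combined with M\"obius inversion across the divisor sum in \eqref{eqn:ZadPoinc}, yields $c^+_{\widetilde{\Za}_d(F_{m,2-2k})}(0)=-L_d(1-k)\sigma_{2k-1}(m)/\zeta(1-2k)$, while the same duality in integral weight (against the normalized weight $2k$ Eisenstein series) gives $c^+_{F_{m,2-2k}}(0)=-2\sigma_{2k-1}(m)/\zeta(1-2k)$; the ratio is exactly $\frac12 L_d(1-k)$. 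In effect, completing your argument would amount to re-deriving Cohen's coefficient formula from the Kloosterman side; importing it through the Bruinier--Funke pairing is what makes the paper's proof short and keeps all the delicate constants on the shoulders of a known result.
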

\begin{proof}
Since the Poincar\'e series $F_{m,2-2k}$ with $m\in \N$ span the space $\Hcusp$, it suffices to show \eqref{eqn:const} for $\cM=F_{m,2-2k}$.  We first use \eqref{eqn:ZadPoinc} to write
\begin{equation}\label{eqn:Mdexp}
\widetilde{\Za}_d\left(F_{m,2-2k}\right) = \cM_{d}:=\sum_{n\mid m} \chi_d(n)n^{k-1}\cM_{d,n},
\end{equation}
where 
\begin{equation}\label{eqn:cMdn}
\cM_{d,n}:=F_{\left(\frac{m}{n}\right)^{2}\left|d\right|,\frac{3}{2}-k}.
\end{equation}
We next require an extension of Zagier's duality \eqref{eqn:Zagduality} to the case $D=0$.  Although well-known to experts, we provide a proof here for the convenience of the reader.  We let $\GG_{\delta}\in \HHcusp$ ($\delta$ a discriminant) be the unique element satisfying $\GG_{\delta}(\tau)=q^{-|\delta|}+O(1)$ and denote by $\Eis\in\MM_{k+\frac{1}{2}}$ the weight $k+\frac{1}{2}$ Eisenstein series, normalized to have constant coefficient $1$.  We use the pairing \eqref{eqn:pairingdef} and compute
$$
\left\{ \Eis,\GG_{\delta} \right\} = \left(\Eis ,\xi_{\frac{3}{2}-k}\left(\GG_{\delta}\right)\right)=0,
$$
because $\Eis$ is orthogonal to cusp forms.  Denoting the $
|\delta|$-th coefficient of $\Eis$ by $c(0,\delta)$ and the $0$-th coefficient of $\GG_{\delta}$ by $a(\delta,0)$, we use \eqref{eqn:pairingval} to evaluate
$$
0=\left\{ \Eis,\GG_{\delta} \right\}=\frac{1}{6}\left(a(\delta,0)+c(0,\delta)\right).
$$
This establishes \eqref{eqn:Zagduality} for $D=0$.  

To compute the coefficients of $\Eis$, recall that the $\left(\frac{m}{n}\right)^2|d|$-th coefficient of Cohen's Eisenstein series \cite{Cohen} is given by 
$$
L_d(1-k)\sum_{r\mid \frac{m}{n}}\mu(r)\chi_{d}(r)r^{k-1}\sigma_{2k-1}\left(\frac{m}{rn}\right)
$$
while its constant coefficient equals $\zeta(1-2k)$.  Applying duality and M\"obius inversion hence yields
\begin{equation}\label{eqn:Zadconst}
c_{\cM_d}^+(0) = -\frac{L_{d}(1-k)}{\zeta(1-2k)}\sum_{n\mid m} \chi_d(n)n^{k-1}\sum_{r\mid \frac{m}{n}}\mu(r)\chi_{d}(r)r^{k-1}\sigma_{2k-1}\left(\frac{m}{rn}\right)=-\frac{L_{d}(1-k)}{\zeta(1-2k)}\sigma_{2k-1}(m).
\end{equation}
Again applying (an integral weight version of) equation \eqref{eqn:Zagduality} with $D=0$ (the proof is exactly as above), the $m$-th coefficient of the normalized weight $2k$ Eisenstein series, namely
$$
-\frac{4k}{B_{2k}}\sigma_{2k-1}(m)=\frac{2}{\zeta(1-2k)}\sigma_{2k-1}(m),
$$
is the negative of the constant coefficient of $F_{m,2-2k}$.  Thus \eqref{eqn:Zadconst} becomes
$$
c_{\cM_d}^+(0)=\frac{c_{\cM}^+(0)}{2}L_d(1-k),
$$
as desired.
\end{proof}

\subsection{Modularity}
The next step in the proof of Theorem \ref{thm:Zaharmonic} is to show the automorphicity of our lifts.  As a side effect, we obtain Theorem \ref{thm:Zaddef} and hence establish Theorem \ref{thm:Zaddef2}.
\begin{proposition}\label{prop:Zmodular}
If $\cM\in \Hcusp$, then
\begin{eqnarray}
\nonumber&\Za_D\left(\cM\right)\in \MM_{k+\frac{1}{2},D}^!,&\\
\label{eqn:Zadeq}
&\Za_d\left(\cM\right)=\widetilde{\Za}_d\left(\cM\right)\in \HHcusp.&
\end{eqnarray}
Furthermore, the functions $\Za_D(\cM)$ are contained in the space spanned by the Poincar\'e series $P_{(-1)^kDm^2,k+\frac{1}{2}}$ ($m\in \N$) and every $\Za_d\left(\cM\right)$ satisfies Theorem \ref{thm:Zaddef}.
\end{proposition}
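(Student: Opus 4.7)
The plan is to reduce all claims to the action of $\widetilde{\Za}_d$ and $\widetilde{\Za}_D$ on Poincar\'e series. Since $\{F_{m,2-2k}:m\in\N\}$ spans $\Hcusp$, it suffices to verify every assertion for $\cM=F_{m,2-2k}$ and extend by linearity. For these generators, identities \eqref{eqn:ZadPoinc} and \eqref{eqn:ZaDPoinc} of Lemma \ref{lem:Poincops} write $\widetilde{\Za}_d(F_{m,2-2k})$ and $\widetilde{\Za}_D(F_{m,2-2k})$ as explicit finite linear combinations of Maass--Poincar\'e series $F_{M,\frac{3}{2}-k}$ and of weakly holomorphic plus-space Poincar\'e series $F_{M,k+\frac{1}{2}}$, respectively. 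This immediately yields the modularity statements: $\widetilde{\Za}_D(\cM)\in\MM_{k+\frac{1}{2},D}^!$ and lies in the span of the $P_{(-1)^k D m^2,k+\frac{1}{2}}$, while $\widetilde{\Za}_d(\cM)\in\HHcusp$.

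To identify $\widetilde{\Za}_d(\cM)$ with the formal expression $\Za_d(\cM)$ defined in \eqref{eqn:Zadpp2}, I would compare Fourier coefficients term by term. The principal part agrees by construction, and the constant term is controlled by Lemma \ref{lem:const}. For the non-holomorphic coefficients $c^-_{\widetilde{\Za}_d(\cM)}(-|\delta|)$ at indices with $d\delta>0$, I would apply $\xi_{\frac{3}{2}-k}$ and invoke Theorem \ref{thm:tildeprop}, which identifies $\xi_{\frac{3}{2}-k}(\widetilde{\Za}_d(\cM))$ with a fixed scalar multiple of the classical Shintani lift $\shin_d(\xi_{2-2k}(\cM))$. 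Comparing $q^{|\delta|}$ coefficients via \eqref{eqn:shindef} and \eqref{eqn:xifourier} then expresses $c^-_{\widetilde{\Za}_d(\cM)}(-|\delta|)$ in terms of the classical twisted cycle integrals $\Tr_{\delta,d}(\xi_{2-2k}(\cM))$, which by \eqref{eqn:shind2} are precisely the normalized traces $\Tr^*_{\delta,d}(\cM)$ appearing in \eqref{eqn:Zadpp2}. The remaining holomorphic coefficients at $|\delta|$ with $d\delta<0$ are read off the explicit Fourier expansions of the constituent Poincar\'e series from Lemma \ref{lem:Poinccoeff}, and are matched with $\Tr^*_{\delta,d}(\cM)=-\Tr^*_{d,\delta}(\cM)$ via \eqref{eqn:Tr*def} and the action of the raising operator.

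To prove Theorem \ref{thm:Zaddef}, I would compute $c^-_{\widetilde{\Za}_d(F_{m,2-2k})}(-|\delta|)$ a second way, by reading off the $\delta$-th coefficient directly from the Poincar\'e series expansion using Lemma \ref{lem:Poinccoeff} at the appropriate special value of $s$, producing a sum of Kloosterman and Bessel contributions. To recast this as the twisted trace $\widetilde{\Tr}^*_{\delta,d}(\cM)$ of cycle integrals of $R_{2-2k}^{k-1}(\cM)$, I would use \eqref{eqn:raiseP} to identify $R_{2-2k}^{k-1}(F_{m,2-2k})$ with a weight zero Poincar\'e series $P_{m,0}(k;\tau)$, which by \eqref{eqn:Niebur} is proportional to a Niebur--Poincar\'e series whose twisted cycle integrals admit a standard Bessel--Kloosterman expansion. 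Equating this with the expression obtained from Theorem \ref{thm:tildeprop} then produces the proportionality constant $C_k$ of Theorem \ref{thm:Zaddef2}. The main technical obstacle is the careful bookkeeping of the Gamma, sign, and power-of-$\pi$ factors that arise from reconciling the Whittaker identities underpinning Lemma \ref{lem:Poinccoeff} against the raising-operator normalization implicit in $\widetilde{\Tr}^*_{\delta,d}$.
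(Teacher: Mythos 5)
Your treatment of the $\Za_d$ half — reduction to the Poincar\'e series $F_{m,2-2k}$, matching principal part and constant term via Lemma \ref{lem:const}, identifying the non-holomorphic coefficients through $\xi_{\frac{3}{2}-k}$ and Theorem \ref{thm:tildeprop}, and obtaining Theorem \ref{thm:Zaddef} by a second computation of the same coefficients via the Niebur Poincar\'e series $R_{2-2k}^{k-1}(F_{m,2-2k})\propto F_{-m}(k;\tau)$ and its Bessel--Kloosterman trace expansion — is essentially the paper's argument, and the bookkeeping you flag is exactly the content of the displayed manipulations of $a_n(k)$ and $b_n$ in the published proof.

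The gap is in the $\Za_D$ half. The proposition asserts that $\Za_D(\cM)$ — the \emph{formal} $q$-series defined in \eqref{eqn:ZaDdef}, whose positive-index coefficients are the traces $\Tr_{\delta,D}^*(\cM)$ — is modular and lies in the span of the $P_{(-1)^kDm^2,k+\frac{1}{2}}$. You instead establish modularity of $\widetilde{\Za}_D(\cM)$, which is modular by construction and carries no information; nothing in your write-up verifies that the trace coefficients $\Tr_{\delta,D}^*(F_{m,2-2k})$ coincide with the Fourier coefficients of the asserted Poincar\'e series combination $|m|^{2k-1}\sum_{n\mid m}|n|^{-k}F_{(m/n)^2|D|,k+\frac{1}{2}}$. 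That verification is the substantive content of the second half of the paper's proof (the case $d_2=D$, using \eqref{eqn:Trcoeff}, \eqref{eqn:bcoeff} and \eqref{eqn:Wspecial2}), and it cannot be replaced by an appeal to \eqref{eqn:ZaDPoinc}: the paper explicitly defers the proof of \eqref{eqn:ZaDPoinc} to Proposition \ref{prop:orthog}, which in turn cites Proposition \ref{prop:Zmodular}, so invoking it here is circular. The repair is straightforward — run the same Niebur-trace/Kloosterman computation you describe for the $d$-case with $d_2=D$, now evaluating at $s=\frac{k}{2}+\frac{1}{4}$ in weight $k+\frac{1}{2}$ where \eqref{eqn:Wspecial2} kills the non-holomorphic terms — but as written this half of the proposition is not proved.
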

\begin{proof}
Since the Poincar\'e series $F_{m,2-2k}$ with $m\in \N$ form a basis for $\Hcusp$, it suffices to work on the level of Poincar\'e series and determine each of their images under $\Za_d$ and $\Za_D$.  As stated in the proof of Proposition 5 of \cite{DIT}, for $m\neq 0$ and discriminants $d_1,d_2$ which are not both negative and for which  $d_1d_2$ is not a square, the traces of the Niebur Poincar\'e series, defined in \eqref{eqn:Ndef}, are given by
\begin{equation}\label{eqn:Trcoeff}
\widetilde{\Tr}_{d_1,d_2}\left(F_m\left(s;\tau\right)\right)=\sum_{n\mid m} \left(\frac{d_2}{n}\right) a_{n}\left(s\right),
\end{equation}
where (note the different normalization)
\begin{equation}\label{eqn:anval}
a_n\left(s\right):=2\pi\left|m\right|^{s-\frac{1}{2}}\left|d_1d_2\right|^{\frac{1}{4}}  \left|n\right|^{-\frac{1}{2}}\sum_{c>0} \frac{K_{\frac{1}{2}}\left(d_1, \left(\frac{m}{n}\right)^2 d_2;c\right)}{4c} 
\begin{cases} 
I_{s-\frac{1}{2}}\left(\frac{\pi}{c}\left|\frac{m}{n}\right|\sqrt{\left|d_1d_2\right|} \right)& \text{if }d_1d_2<0,\\
J_{s-\frac{1}{2}}\left(\frac{\pi}{c}\left|\frac{m}{n}\right|\sqrt{d_1d_2}\right)& \text{if }d_1d_2>0.
\end{cases} 
\end{equation}

We use $\widetilde{\Za}_d$ defined in \eqref{eqn:Zadpp} and show that the holomorphic part of $\widetilde{\Za}_d\left(F_{m,2-2k}\right)$ matches the holomorphic part of $\Za_d\left(F_{m,2-2k}\right)$ for every $m\in \N$.  We simultaneously prove Theorem \ref{thm:Zaddef}, since these cases may be treated uniformly.

The constant coefficients of $\widetilde{\Za}_{d}$ and $\Za_d$ match by \eqref{eqn:const}.  In order to use \eqref{eqn:Trcoeff} to compare the other coefficients of the right hand side of \eqref{eqn:ZadPoinc} with the coefficients given in Theorem \ref{thm:Zaddef}, we relate $R_{2-2k}^{k-1}\left(F_{m,2-2k}\right)\left(\tau\right)$ to $F_m\left(k;\tau\right)$.  By \eqref{eqn:Fmdef}, \eqref{eqn:raiseP}, and \eqref{eqn:Niebur}, we obtain
\begin{equation}\label{eqn:Rlift}
R_{2-2k}^{k-1}\left( F_{m,2-2k}\left(\tau\right)\right)=\left(4\pi m\right)^{k-1}(k-1)!P_{-m,0}\left(k;\tau\right) = \left(4\pi\right)^{k-1} F_{-m}\left(k;\tau\right).
\end{equation}
We now choose $d_2=d$ and $s=k$ in \eqref{eqn:Trcoeff} and relate $a_n(k)$ to the coefficient $c_{\cM_{d,n}}^{\varepsilon}\left(\varepsilon\left|d_1\right|\right)$ in \eqref{eqn:fourier}, where $\varepsilon:=\sgn\left((-1)^{k+1}d_1\right)$.  For this, we first rewrite $a_n(k)$ in terms of  
$$
b_n:=b_{-\left(\frac{m}{n}\right)^2\left|d\right|,\frac{3}{2}-k}\left(\frac{k}{2}+\frac{1}{4};\varepsilon\left|d_1\right|\right),
$$
which were defined in \eqref{eqn:bcoeff}.

Recall that $K_{\wt}\left(m,n;c\right)=K_{\wt}\left(n,m;c\right)$, $K_{\wt+2}\left(m,n;c\right)=K_{\wt}\left(m,n;c\right)$, and $K_{2-\wt}\left(-n,-m;c\right)= K_{\wt}\left(m,n;c\right)$ (e.g., see Proposition 3.1 of \cite{BringmannOnoMathAnn}).  Hence 
\begin{equation}\label{eqn:Kloosterman}
K_{\frac{1}{2}}\left(d_1, \left(\frac{m}{n}\right)^2 d;c\right) = K_{\frac{3}{2}-k}\left(-\left(\frac{m}{n}\right)^2 \left|d\right|,\varepsilon\left|d_1\right|;c\right).
\end{equation}
Thus \eqref{eqn:bcoeff} implies that
$$
b_n =(-1)^{\left\lfloor 1-\frac{k}{2}\right\rfloor}2\pi  \left|\frac{m}{n}\right|\cdot  \left|\frac{d_2}{d_1}\right|^{\frac{1}{2}}\sum_{c>0}\frac{K_{\frac{1}{2}}\left(d_1,\left(\frac{m}{n}\right)^2d_2;c\right)}{4c}
\begin{cases}
I_{k-\frac{1}{2}}\left(\frac{\pi}{c}\left|\frac{m}{n}\right|\sqrt{\left|d_1d_2\right|}\right)&\text{if }d_1d_2<0,\\
J_{k-\frac{1}{2}}\left(\frac{\pi}{c}\left|\frac{m}{n}\right|\sqrt{d_1d_2}\right)&\text{if }d_1d_2>0.
\end{cases}
$$
We conclude that 
\begin{equation}\label{eqn:Trrewrite}
a_n(k) = (-1)^{\left\lfloor 1-\frac{k}{2}\right\rfloor} \left|m\right|^{k-\frac{3}{2}}\left|d_1\right|^{\frac{3}{4}}\left|d_2\right|^{-\frac{1}{4}} \left|n\right|^{\frac{1}{2}} b_n.
\end{equation}
Using \eqref{eqn:Wspecial} and \eqref{eqn:Fmdef}, we rewrite
$$
b_n=\left(\frac{n^2}{m^2}\left|\frac{d_1}{d_2}\right|\right)^{\frac{k}{2}-\frac{3}{4}} c_{\cM_{d,n}}^{\varepsilon} \left(\varepsilon\left|d_1\right|\right).
$$
Thus from \eqref{eqn:Trrewrite} we obtain
$$
a_n(k) =(-1)^{\left\lfloor 1-\frac{k}{2}\right\rfloor}\left|d_1\right|^{\frac{k}{2}}\left|d_2\right|^{\frac{1-k}{2}} \left|n\right|^{k-1}c_{\cM_{d,n}}^{\varepsilon} \left(\varepsilon\left|d_1\right|\right).
$$
From \eqref{eqn:Rlift}, we have 
\begin{align*}
\widetilde{\Tr}_{d_1,d}\left( R_{2-2k}^{k-1}\left(F_{m,2-2k}\right)\right)&= (-1)^{\left\lfloor 1-\frac{k}{2}\right\rfloor}\left(4\pi\right)^{k-1}\left|d_1\right|^{\frac{k}{2}}\left|d\right|^{\frac{1-k}{2}} \sum_{n\mid m} \left(\frac{d}{n}\right) \left|n\right|^{k-1}c_{\cM_{d,n}}^{\varepsilon}\left(\varepsilon\left|d_1\right|\right),\\
\widetilde{\Tr}_{d_1,d}^*\left(F_{m,2-2k}\right)&=\sum_{n\mid m} \left(\frac{d}{n}\right) \left|n\right|^{k-1}c_{\cM_{d,n}}^{\varepsilon}\left(\varepsilon\left|d_1\right|\right)=c_{\cM_d}^{\varepsilon}\left(\varepsilon\left|d_1\right|\right).
\end{align*}
We have thus shown that the non-holomorphic part of $\widetilde{\Za}_d\left(F_{m,2-2k}\right)$ satisfies Theorem \ref{thm:Zaddef} and its holomorphic part matches that of $\Za_d\left(F_{m,2-2k}\right)$.  This concludes Theorem \ref{thm:Zaddef}, once we have shown that $\Za_d=\widetilde{\Za}_d$.  To prove this, we use Theorem \ref{thm:tildeprop} to compute the non-holomorphic part of $\cM_d$, which was defined in \eqref{eqn:Mdexp}.  Since the coefficients $c_{\cM_d}^-(\delta)$ are completely determined by $\xi_{\frac{3}{2}-k}\left(\cM_d\right)$, it suffices to evaluate this with \eqref{eqn:tildeprop}.

Denoting the $\delta$-th coefficient of $\shin_d\left(\xi_{2-2k}\left(F_{m,2-2k}\right)\right)$ by $r_{m}(\delta)$, \eqref{eqn:xifourier} and \eqref{eqn:tildeprop} yield
$$
c_{\cM_d}^-(\delta)=\frac{(-1)^{\left\lfloor \frac{k}{2}\right\rfloor}2^{k-1}\Gamma\left(k-\frac{1}{2}\right)}{3\left(4\pi|\delta|\right)^{k-\frac{1}{2}}} r_{m}(\delta).
$$
However, by the definition \eqref{eqn:shindef} of the Shintani lifts, one has 
$$
r_{m}(\delta)=\left(d\delta\right)^{\frac{k-1}{2}}\sum_{Q\in \SL_2(\Z)\backslash \QQ_{d\delta}} \chi\left(Q\right)\int_{C_Q} \CC\left(\xi_{2-2k}\left(F_{m,2-2k}\right);Q\right).
$$
This yields \eqref{eqn:Zadeq} and Theorem \ref{thm:Zaddef}.

We next assume that $d_2=D$ satisfies $(-1)^kD<0$.  In this case, we denote
$$
g_{D,n}:=F_{\left(\frac{m}{n}\right)^{2}\left|D\right|,k+\frac{1}{2}}.
$$
Similarly to \eqref{eqn:Kloosterman}, we have
$$
K_{\frac{1}{2}}\left(d_1, \left(\frac{m}{n}\right)^2 D;c\right) = K_{k+\frac{1}{2}}\left(-\left(\frac{m}{n}\right)^2 \left|D\right|,\left(-1\right)^{k}d_1;c\right).
$$
For ease of notation, we abbreviate
$$
b_n:=b_{-\left(\frac{m}{n}\right)^2\left|D\right|,k+\frac{1}{2}}\left(\frac{k}{2}+\frac{1}{4};\left(-1\right)^{k}d_1\right).
$$
By \eqref{eqn:anval} and \eqref{eqn:bcoeff}, we have
$$
a_n(k)=(-1)^{\left\lfloor \frac{k+1}{2}\right\rfloor}\left|m\right|^{k-\frac{3}{2}}\left|d_1\right|^{\frac{3}{4}}\left|d_2\right|^{-\frac{1}{4}}|n|^{\frac{1}{2}}b_n.
$$
For $(-1)^kd_1>0$, \eqref{eqn:Wspecial2} and \eqref{eqn:Fmdef} imply that
$$
b_n=\left(\frac{m^2}{n^2}\left|\frac{d_2}{d_1}\right|\right)^{\frac{k}{2}+\frac{1}{4}} c_{g_{D,n}}^+\left(|d_1|\right).
$$
Therefore, we have
$$
a_n(k)=(-1)^{\left\lfloor \frac{k+1}{2}\right\rfloor}|m|^{2k-1}|n|^{-k}\left|d_2\right|^{\frac{k}{2}}\left|d_1\right|^{\frac{1-k}{2}}c_{g_{D,n}}^+\left(|d_1|\right).
$$
We hence obtain 
\begin{align*}
\Tr_{d_1,D}\left( R_{2-2k}^{k-1}\left(F_{m,2-2k}\right)\right)&= (-1)^{\left\lfloor \frac{k+1}{2}\right\rfloor}\left(4\pi\right)^{k-1}\left|d_1\right|^{\frac{1-k}{2}}\left|D\right|^{\frac{k}{2}} |m|^{2k-1}\sum_{n\mid m} \left(\frac{D}{n}\right) \left|n\right|^{-k}c_{g_{D,n}}^{+}\left(\left|d_1\right|\right),\\
\Tr_{d_1,d}^*\left(F_{m,2-2k}\right)&=|m|^{2k-1}\sum_{n\mid m} \left(\frac{D}{n}\right) \left|n\right|^{-k}c_{g_{D,n}}^{+}\left(\left|d_1\right|\right).
\end{align*}
Finally, we consider the case  $(-1)^kd_1\leq 0$.  By \eqref{eqn:Wspecial2}, we trivially obtain 
$$
c_{g_{D,n}}^{\varepsilon}\left((-1)^kd_1\right)=
\begin{cases}
1&\text{if }d_1=\frac{m^2}{n^2}d_2\text{ and }\varepsilon=1,\\
0 &\text{otherwise.}
\end{cases}
$$
We hence conclude that
$$
\Za_D\left(F_{m,2-2k}\right) = |m|^{2k-1}\sum_{n\mid m} |n|^{-k}g_{D,n}.
$$
Since $g_{D,n}$ is a constant multiple of $P_{(-1)^kD\left(\frac{m}{n}\right)^2,k+\frac{1}{2}}$ by \eqref{eqn:PmClassical} and \eqref{eqn:Fmdef}, the claim follows.
\end{proof}

\subsection{Orthogonality}
In this section, we show the orthogonality of $\Za_D\left(\cM\right)$ to cusp forms.
\begin{proposition}\label{prop:orthog}
If $\cM\in \Hcusp$, then all of the Zagier lifts $\Za_D\left(\cM\right)$ are orthogonal to cusp forms and have vanishing constant terms.  In particular, we have 
\begin{equation}\label{eqn:ZaDeq}
\Za_D\left(\cM\right)=\widetilde{\Za}_D\left(\cM\right).
\end{equation}
\end{proposition}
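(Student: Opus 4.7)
Since the Maass--Poincar\'e series $\{F_{m,2-2k}\}_{m\in\N}$ span $\Hcusp$, linearity of $\Za_D$ reduces the proposition to the case $\cM=F_{m,2-2k}$. The closing identification in the proof of Proposition \ref{prop:Zmodular} expresses
\[
\Za_D(F_{m,2-2k})=|m|^{2k-1}\sum_{n\mid m}|n|^{-k}\,g_{D,n},
\]
with each $g_{D,n}$ a scalar multiple of the plus-space Poincar\'e series $P_{(-1)^k D(m/n)^2,\,k+\frac{1}{2}}\bigl(\tfrac{k}{2}+\tfrac{1}{4};\tau\bigr)$. Hence it suffices to establish, for each weakly holomorphic Poincar\'e series $P_{m,\wt}(\wt/2;\cdot)$ with $\wt=k+\tfrac{1}{2}$ and $m<0$, both the vanishing of the constant terms at every cusp of $\Gamma_0(4)$ and orthogonality to the space of cusp forms $\SS_{k+\frac{1}{2}}$. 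As a byproduct this also delivers the postponed identity \eqref{eqn:ZaDPoinc}.

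\textbf{Vanishing constant terms.} Lemma \ref{lem:Poinccoeff} shows that the $n=0$ Fourier coefficient at $\infty$ of $P_{m,\wt}(s;\tau)$ carries the factor $\W_{0,\wt}(s;y)$, whose definition \eqref{eqn:Wdef} contains $1/\Gamma(s-\wt/2)$. At the harmonic point $s=\wt/2$ this factor vanishes because $\Gamma$ has a simple pole at $0$. The constant coefficients at the other two cusps $0$ and $\tfrac{1}{2}$ are extracted from the $\infty$-expansion via \eqref{eqn:cuspeven} and \eqref{eqn:cuspodd} and therefore inherit the same vanishing.

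\textbf{Orthogonality to cusp forms.} Fix $h\in\SS_{k+\frac{1}{2}}$. By surjectivity of $\xi_{\frac{3}{2}-k}$, write $h=\xi_{\frac{3}{2}-k}(\cM)$ with $\cM$ a weight-$\frac{3}{2}-k$ harmonic weak Maass form mapping to cusp forms, and set $g:=P_{m,\wt}(\wt/2;\cdot)$. Applying the Bruinier--Funke pairing \eqref{eqn:pairingdef}--\eqref{eqn:pairingval} (with weight $\frac{3}{2}-k$) gives
\[
(g,h)_{\reg}=\{g,\cM\}=\frac{1}{6}\sum_{n\in\Z}c_\cM^+(n)\,c_g(-n).
\]
Since $g$ is weakly holomorphic with single-term principal part $q^m$ and vanishing constant term, only finitely many indices contribute, and this finite combination vanishes by the Zagier--Bringmann--Ono duality between weakly holomorphic modular forms of weights $k+\tfrac{1}{2}$ and $\tfrac{3}{2}-k$ (of which \eqref{eqn:Zagduality} is an instance). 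Since the Kohnen plus-space projection is orthogonal and $\Za_D(\cM)$ lies in the plus space, orthogonality to cusp forms outside the plus space is automatic.

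\textbf{Conclusion.} Combining the last two steps places $\Za_D(\cM)$ in $\SSperpD{D}$ with the principal part prescribed for $\widetilde{\Za}_D(\cM)$ in \eqref{eqn:wideZaDdef}. Uniqueness of the element of $\SSperpD{D}$ with a given principal part (which follows from non-degeneracy of the Petersson pairing on cusp forms) then yields \eqref{eqn:ZaDeq}. The principal obstacle is the orthogonality step: although the naive Fourier unfolding argument (the $x$-integration would pick out the $m$-th coefficient of $h$ with $m<0$) suggests a clean vanishing, turning this into a rigorous statement in the regularized inner product requires the Bruinier--Funke pairing together with the duality for dual-weight weakly holomorphic forms.
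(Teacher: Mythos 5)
Your overall skeleton---reduction to the Poincar\'e series $P_{(-1)^kD(m/n)^2,k+\frac{1}{2}}$, the vanishing of the constant term via the factor $\Gamma\left(s-\frac{\wt}{2}\right)^{-1}$ in $\W_{0,\wt}$ at $s=\frac{\wt}{2}$, and the appeal to uniqueness at the end---is sound and consistent with the paper. The gap is in the orthogonality step. You write $h=\xi_{\frac{3}{2}-k}(\cM)$ for an \emph{arbitrary} preimage $\cM$ and reduce $(g,h)_{\reg}$ to the finite sum $\frac{1}{6}\sum_n c_{\cM}^+(n)a_g(-n)$, which (since $a_g(0)=0$ and the principal part of $g$ is the single term $q^{m}$) equals $\frac{1}{6}\bigl(c_{\cM}^+(|m|)+\sum_{n<0}c_{\cM}^+(n)a_g(-n)\bigr)$. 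Asserting that this vanishes ``by the duality \eqref{eqn:Zagduality}'' is circular: \eqref{eqn:Zagduality} concerns the coefficients $c(D,d)$ of $\widetilde{h}_D$, which is \emph{defined} as the unique form with principal part $q^{-|D|}$ lying in $\SSperp$, i.e.\ already orthogonal to cusp forms. To identify your $g$ with $\widetilde{h}_D$, so that its positive-index coefficients are the ones the duality speaks about, you would need to know that $g$ is orthogonal to cusp forms---which is exactly the statement being proven; a priori $g$ and $\widetilde{h}_D$ differ by a cusp form. Moreover, for a general preimage $\cM$ the coefficients $c_{\cM}^+(n)$ with $n>0$ are not governed by any duality statement at all.

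The argument can be repaired, but it then becomes a genuinely different (and not shorter) proof: one must choose $\cM$ to be a linear combination of the Maass--Poincar\'e series $F_{n,\frac{3}{2}-k}$ (whose images under $\xi_{\frac{3}{2}-k}$ span $\SS_{k+\frac{1}{2}}$), so that the required vanishing becomes the explicit coefficient identity ``$|m|$-th coefficient of $F_{n,\frac{3}{2}-k}$ equals minus the $n$-th coefficient of $g$,'' and then verify this from Lemma \ref{lem:Poinccoeff} via the Kloosterman-sum symmetries $K_{2-\wt}(-n,-m;c)=K_{\wt}(m,n;c)$---that is, redo the computation of \cite{BringmannOnoMathAnn} rather than cite the packaged statement \eqref{eqn:Zagduality}. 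The paper avoids coefficients entirely: it writes $P_{-m,k+\frac{1}{2}}$ as a multiple of $\xi_{\frac{3}{2}-k}\left(F_{-m,\frac{3}{2}-k}\right)$ and applies Stokes' theorem to $g(\tau)\overline{F_{-m,\frac{3}{2}-k}(\tau)}$ over $\FF_{\mathcal{T}}(4)$, so the regularized inner product collapses to a boundary integral whose only surviving contribution, $\sum_{n>0}a_g(n)c^-(-n)\Gamma\left(k-\frac{1}{2};4\pi n\mathcal{T}\right)$, visibly tends to $0$ as $\mathcal{T}\to\infty$; the exponential decay of the cusp form does all the work and no duality input is needed.
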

\begin{proof}
By uniqueness, we only need to prove the first statement.  Since the constant term of $\Za_D(\cM)$ vanishes by definition, it remains to prove that $\Za_D(\cM)$ is orthogonal to cusp forms.  However, Proposition \ref{prop:Zmodular} implies that $\Za_D(\cM)$ is contained in the space spanned by the Poincar\'e series $P_{-m,k+\frac{1}{2}}$ $(m\in \N)$.  It is hence enough to prove that $P_{-m,k+\frac{1}{2}}$ is orthogonal to cusp forms.  For $g(\tau)=\sum_{n=1}^{\infty}a_g(n)q^n\in\SS_{k+\frac{1}{2}}$ we now compute $\left(P_{-m,k+\frac{1}{2}},g\right)_{\reg}$.  In order to evaluate the inner product, we use \eqref{eqn:PmClassical}, \eqref{eqn:Fmdef}, and \eqref{eqn:xiact} to see that there exists a non-zero constant $C_k$ such that 
\begin{equation}\label{eqn:xiF}
\xi_{\frac{3}{2}-k}\left(F_{-m,\frac{3}{2}-k}\right) = C_k P_{-m,k+\frac{1}{2}}.
\end{equation}
Furthermore, $F_{-m,\frac{3}{2}-k}$ has an expansion of the type
\begin{equation}\label{eqn:Fexpansion}
F_{-m,\frac{3}{2}-k}(\tau) = \Gamma\left(k-\frac{1}{2};-4\pi my\right)q^{m} + \sum_{\begin{subarray}{c}n<0 \end{subarray}} c^-(n) \Gamma\left(k-\frac{1}{2}; 4\pi |n| y \right)q^{n} + \sum_{n \geq 0} c^+(n) q^{n}.
\end{equation}
Proceeding as in Bruinier--Funke \cite{BruinierFunke}, we use Stokes' Theorem to show that 
\begin{multline*}
\left\{F_{-m,\frac{3}{2}-k},g\right\}=C_k\left(P_{-m,k+\frac{1}{2}},g\right)_{\reg}=\frac{C_k}{6}\int_{\FF_{\mathcal{T}}(4)} g(\tau) \overline{P_{-m,k+\frac{1}{2}}(\tau)} y^{k-\frac{3}{2}}dx dy \\
= \frac{1}{6} \int_{\FF_{\mathcal{T}}(4)}g(\tau)\overline{\xi_{\frac{3}{2}-k}\left(F_{-m,\frac{3}{2}-k}(\tau)\right)} y^{k-\frac{3}{2}}dx dy= -\frac{1}{6}\int_{\partial \FF_{\mathcal{T}}(4)}g(\tau)\overline{F_{-m,\frac{3}{2}-k}(\tau)} d\tau.
\end{multline*}
A standard argument then shows that the integral along the boundary cancels except for the integral along the line from $-\frac{1}{2}+i\mathcal{T}$ to $\frac{1}{2}+i\mathcal{T}$ and its image under the coset representatives $I$, $ST$, $S$, $ST^{-1}$, $ST^{-2}$, and $ST^{-2}S$, where $T:=\left(\begin{smallmatrix}1&1\\ 0 &1\end{smallmatrix}\right)$ and $S:=\left(\begin{smallmatrix}0&-1\\ 1 &0\end{smallmatrix}\right)$.  Using \eqref{eqn:cuspeven} and \eqref{eqn:cuspodd}, this reduces the proposition to showing the vanishing of 
$$
\frac{1}{6} \int_{-\frac{1}{2}}^{\frac{1}{2}} g\left(x+i\mathcal{T}\right)F_{-m,\frac{3}{2}-k}\left(x+i\mathcal{T}\right)dx.
$$
Every Fourier coefficient of the integrand vanishes except for the constant term.   Using \eqref{eqn:Fexpansion} and the expansion for $g$, the constant term equals
$$
\sum_{n>0} a_g(n) c^-(-n)\Gamma\left(k-\frac{1}{2};4\pi n \mathcal{T}\right),
$$
which vanishes as $\mathcal{T}\to \infty$.  Therefore $P_{-m,k+\frac{1}{2}}$ is orthogonal to cusp forms.

\end{proof}

\subsection{Bijectivity of the lifts}
Finally, we show that the lifts are bijections.

\begin{proposition}\label{prop:bij}
For $k>1$ the maps $\Za_d:\Hcusp\to \HHcusp$ and $\Za_D:\Hcusp\to \SSperpD{D}$ are bijections.
\end{proposition}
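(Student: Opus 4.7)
Proof plan. My approach is to reduce bijectivity to an invertibility statement for an upper-triangular infinite matrix. I will use the Poincar\'e bases introduced in Section \ref{sec:Poincare}: the harmonic Poincar\'e series $\{F_{m,2-2k}:m\in\N\}$ span $\Hcusp$ (as used in the proof of Theorem \ref{thm:tildeprop}), and on the target side the Poincar\'e families $\{F_{|d|\ell^2,\frac{3}{2}-k}:\ell\in\N\}$ and $\{P_{-|D|\ell^2,k+\frac{1}{2}}:\ell\in\N\}$ span $\HHcusp$ and $\SSperpD{D}$ respectively. The spanning on the target side follows from uniqueness of elements with a given principal part: for $\SSperpD{D}$ this is exactly the argument sketched around \eqref{eqn:growth} (the difference is both cuspidal and orthogonal to cusp forms, hence zero, and by Proposition \ref{prop:orthog} each listed Poincar\'e series indeed lies in $\SSperpD{D}$), and for $\HHcusp$ it is the uniqueness invoked in the remark following \eqref{eqn:Zadpp}.

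The key computational inputs are the formulas \eqref{eqn:ZadPoinc} and \eqref{eqn:ZaDPoinc}, which yield
$$
\Za_d\bigl(F_{m,2-2k}\bigr)=\sum_{n\mid m}\chi_d(n)\,n^{k-1}\,F_{(m/n)^2|d|,\frac{3}{2}-k},
$$
and the analogous formula for $\Za_D$. Re-indexing $\ell=m/n$, a nonzero contribution to the coefficient of the $\ell$-th target basis vector occurs only when $\ell\mid m$, and in particular only when $\ell\le m$. Ordering both bases by the integer index $m$ (respectively $\ell$), each lift is represented by an upper-triangular infinite matrix; the diagonal entry (at $\ell=m$, i.e.\ $n=1$) is $\chi_d(1)=1$ for $\Za_d$ and $m^{2k-1}$ for $\Za_D$, both nonzero. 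An upper-triangular matrix with nonzero diagonal is invertible, the inverse being obtained by M\"obius inversion along divisibility chains, so each map $\Za_d$ and $\Za_D$ is a bijection onto its stated target.

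The main obstacle is justifying that the two target Poincar\'e families actually span the full spaces $\HHcusp$ and $\SSperpD{D}$, rather than merely a subspace. For $\SSperpD{D}$ this is straightforward by the argument indicated above. For $\HHcusp$ the subtlety is that weight $\frac{3}{2}-k<0$ admits nontrivial elements of $\HHcusp$ with trivial principal part whose $\xi_{\frac{3}{2}-k}$-image is a nonzero cusp form in $\SS_{k+\frac{1}{2}}$; one must verify that these cuspidal preimages are already captured by the listed Poincar\'e series. This follows because the corresponding cuspidal elements of $\Hcusp$ (preimages under $\xi_{2-2k}$ of cusp forms in $S_{2k}$) are mapped by $\Za_d$ to such cuspidal preimages via Theorem \ref{thm:tildeprop}, and the Shimura--Kohnen isomorphism $\SS_{k+\frac{1}{2}}\cong S_{2k}$ together with a dimension count ensures the match. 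Combining this spanning with the triangular matrix argument above completes the proof.
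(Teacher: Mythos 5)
Your core argument coincides with the paper's: the paper also obtains injectivity from linearity together with the uniqueness of forms with prescribed principal parts, and obtains surjectivity by recursively constructing preimages with any prescribed principal part using exactly the triangular structure of \eqref{eqn:ZadPoinc} and \eqref{eqn:ZaDPoinc}. Your ``upper-triangular matrix with nonzero diagonal'' (diagonal entries $1$ for $\Za_d$ and $m^{2k-1}$ for $\Za_D$) is a repackaging of that recursion, and that part is correct.

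The paragraph on the ``main obstacle'' for $\HHcusp$, however, is mistaken, even though the conclusion you need there is true. You assert that $\HHcusp$ contains nontrivial elements with trivial principal part whose image under $\xi_{\frac{3}{2}-k}$ is a nonzero cusp form. No such elements exist: by \eqref{eqn:pairingdef} and \eqref{eqn:pairingval}, writing $g:=\xi_{\frac{3}{2}-k}(\cM)$ for $\cM\in\HHcusp$, one has $\|g\|^2=\left\{g,\cM\right\}=\frac{1}{6}\sum_{n<0}c_{\cM}^+(n)a_g(-n)$, since $g$ is cuspidal and so only the principal part of $\cM$ contributes; a trivial principal part therefore forces $g=0$, whence $\cM$ is a holomorphic modular form of the negative weight $\frac{3}{2}-k$ and hence zero. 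Moreover, if such elements did exist, your proposed rescue could not succeed: a linear combination of the $F_{|d|\ell^2,\frac{3}{2}-k}$ with trivial principal part is identically zero (the principal parts $q^{-|d|\ell^2}$ are linearly independent), so a nonzero form with trivial principal part could never lie in their span, and the appeal to Theorem \ref{thm:tildeprop} and a Shimura--Kohnen dimension count does not repair this. The correct and sufficient statement is simply that elements of $\HHcusp$ are determined by their principal parts---the same uniqueness already used in the remark following \eqref{eqn:Zadpp} and the same argument you give for $\SSperpD{D}$---after which your spanning claim and the triangular inversion go through. Note that the identical pairing argument applied to $\Hcusp$ (with $k>1$) is also what legitimizes reading off injectivity from the matrix on the source side, so it is worth stating once and citing in both places.
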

\begin{proof}
Injectivity follows from the linearity of the lifts and the uniqueness of functions with given principal parts.

In order to show surjectivity, we recursively construct $\cM\in \Hcusp$ such that the function $\Za_d(\cM)$ (resp. $\Za_D(\cM)$) has a given  principal part.  This is sufficient, since $\SS_{\frac{3}{2}-k}=\{0\}$ and $\Za_D\left(\cM\right)$ is orthogonal to cusp forms by Proposition \ref{prop:orthog}.

By \eqref{eqn:ZaDPoinc} and $\Za_D=\widetilde{\Za}_D$, the principal part of $\Za_D\left(F_{1,2-2k}\right)$ equals $q^{-|D|}$.  We iteratively take linear combinations of $F_{n,2-2k}$, $n\leq m$ in order to obtain a lift with principal part $q^{-|D|m^2}$.  Using \eqref{eqn:ZadPoinc}, the argument for $\Za_d$ follows similarly. 
\end{proof}

\section{A weakly holomorphic Shintani lift}\label{sec:weakShin}
In this section we use the Zagier lifts to construct Shintani lifts for weakly holomorphic modular forms.  For this, we first (uniquely) decompose $f\in S_{2k}^!$ as $f=f_0+f_1$, where $f_0\in S_{2k}$ and $f_1\in \Sperp$.  For each pair $d,D$, the \begin{it}weakly holomorphic Shintani lift\end{it} is then defined by 
\begin{equation}\label{eqn:shindH}
\shin_{d,D}(f):= \xi_{\frac{3}{2}-k}\circ \Za_d\circ \xi_{2-2k}^{-1}\left(f_0\right) +\Za_D\circ \left(\DD^{2k-1}\right)^{-1}\left(f_1\right).
\end{equation}
\begin{remarks}
Before proving Theorem \ref{thm:proportion}, a few comments about the definition \eqref{eqn:shindH} are in order. 
\noindent

\noindent
\begin{enumerate}
\item
 Firstly, note that although $\xi_{2-2k}$ is not injective, one sees that the resulting map is well-defined by using \eqref{eqn:tildeprop}.  No such problem arises when restricting to $\Sperp$, since $\DD^{2k-1}$ is a bijection.
\item
One can extend the definition \eqref{eqn:shindH} to include the case when $d$ and $D$ are not fundamental.  In order to do so, for $\delta=\Delta m^2$ with $\Delta$ fundamental, one sets
\begin{equation}\label{eqn:Zanonfund}
\Za_{\delta}:=T_{m^2}\circ \Za_{\Delta}=\Za_{\Delta}\circ T_m.
\end{equation}
Since $\shin_{\delta}=T_{m^2}\circ\shin_{\Delta}$ by the remark following Theorem 3 of \cite{KohnenMathAnn}, this is the natural definition.
\item
The definition \eqref{eqn:shindH} may at first seem naive, since $f$ is split into two pieces and then separate operators are applied to each piece.  However, we shall see that the lift is naturally tied together via the classical Shintani lift, motivating its name.  
\begin{itemize}
\item[(i)]
The lift maps a cusp form $f$ to a (non-zero) constant multiple of its Shintani lift.  The explicit constant follows by a short calculation and is given in \eqref{eqn:shindD}.
\item[(ii)]
  Suppose that $\cM$ is a preimage of $f\in S_{2k}$ under the $\xi$-operator, and write $g:=\mathcal{D}^{2k-1}(\cM)$.  Although one does not see an immediate connection between $\shin_{d,D}(g)$ and the Shintani lift for one choice of $D$, packaging the $d$-th coefficient of all such lifts (using \eqref{eqn:Zanonfund} for non-fundamental discriminants) into the generating function
\begin{equation}\label{eqn:shingen}
\sum_{\delta:d\delta<0} c_{\shin_{d,\delta}(g)}^+(d) q^{|\delta|}
\end{equation}
yields a constant multiple of $\shin_d(f)$.  This follows by the duality \eqref{eqn:Derduality}.
\item[(iii)]
Moreover, the connection between the two parts is $p$-adically justified.  The authors plan to address this in the forthcoming paper \cite{BGK}.
\end{itemize}
\end{enumerate}
\end{remarks}

\begin{proof}[Proof of Theorem \ref{thm:proportion}]
Theorem \ref{thm:proportion} (1) follows from Lemma \ref{lem:Hecke}, \eqref{eqn:Zadeq}, and \eqref{eqn:ZaDeq}, since $\xi_{2-2k}$ and $\DD^{2k-1}$ are Hecke equivariant.

If $f\in \Sperp$, then we see by \eqref{eqn:shindH} that $\shin_{d,D}(f)$ is in the image of $\Za_D$, which is orthogonal to cusp forms by Proposition \ref{prop:orthog}.  Since Shintani's lift maps cusp forms to cusp forms, Theorem \ref{thm:proportion} (2) follows by \eqref{eqn:shindD}.

\end{proof}
\begin{proof}[Proof of Corollary \ref{cor:weakly}]
Note that by \eqref{eqn:shindH}, for a Hecke eigenform $f$, we have $\shin_{d,D}\left(f\right)=0$ if and only if 
$$
\Za_d\left(\xi_{2-2k}^{-1}\left(f\right)\right)\in \MM_{\frac{3}{2}-k}^!.
$$
By Theorem \ref{thm:proportion} (2), $\shin_{d,D}\left(f\right)=0$ if and only if $\shin_{d}\left(f\right)=0$.  However, it is known that $\shin_d\left(f\right)=0$ if and only if $L\left(f,\chi_d,k\right)=0$ \cite{KohnenZagier,Waldspurger}.
\end{proof}

\begin{proof}[Proof of Theorem \ref{thm:shinimage}]
By Theorem 1.2 of \cite{BruinierOnoRhoades}, the map $\DD^{2k-1}$ is a bijection from $\Hcusp$ to $\Sperp$, while by Proposition \ref{prop:bij}, $\Za_D$ is a bijection between $\Hcusp$ and $\SSperpD{D}$. 
\end{proof}

\section{Proof of Theorems \ref{thm:HeckeD} and \ref{thm:Derduality}}
\subsection{Weakly holomorphic Hecke eigenforms}\label{sec:Hecke}
In this section we prove that $\SSperp\Big/ \bigoplus_D  \JD$, where $\JD=\Za_D\left(S_{2-2k}^!\right)$, is spanned by Hecke eigenforms.  

In order for Theorem \ref{thm:HeckeD} to be meaningful, we must first conclude that the Hecke algebra acts on the relevant subspaces.
\begin{lemma}\label{lem:hecke}
The spaces $\SS_{k+\frac{1}{2}}$, $\Jd$, $\JD$, $\SSperpD{D}$, and $\HHcusp$ are Hecke stable.
\end{lemma}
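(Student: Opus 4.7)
The plan is to verify Hecke stability space by space, since each is cut out by a different combination of conditions. For $\SS_{k+\frac{1}{2}}$, this is Kohnen's classical theorem: the plus space is preserved by $T(p^2)$, and the cuspidal condition is trivially preserved.

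For $\JD=\Za_D(S_{2-2k}^!)$ and the analogous $\Jd=\Za_d(S_{2-2k}^!)$, the Hecke equivariance of the Zagier lifts established in Lemma \ref{lem:Hecke} (equations \eqref{eqn:ZagierHeckeD} and \eqref{eqn:ZagierHecked}) reduces the claim to Hecke stability of $S_{2-2k}^!$. But $S_{2-2k}^!$ is stable under the integral weight Hecke operators $T(p)$, because \eqref{eqn:Heckeint} evidently preserves weak holomorphicity and the vanishing of the constant term (set $n=0$ in \eqref{eqn:Heckeint}).

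For $\SSperpD{D}$, three properties must be verified: (i) the Kohnen plus space condition, (ii) vanishing constant term together with orthogonality to cusp forms in the regularized sense, and (iii) principal part supported only on indices in the square class $-|D|m^2$. Property (i) is again Kohnen. For (ii), the vanishing of the constant term is preserved because the $n=0$ coefficient in \eqref{eqn:Heckedef} is a combination of $c(\cdot;0)$ terms, and orthogonality to cusp forms follows from the regularized self-adjointness \eqref{eqn:TnPet}: for any cusp form $h\in \SS_{k+\frac{1}{2}}$, $(g|T(p^2),h)_{\reg}=(g,h|T(p^2))_{\reg}=0$ since $h|T(p^2)$ is again cuspidal. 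For (iii), inspect \eqref{eqn:Heckedef}: the $n$-th coefficient of $g|T(p^2)$ is a linear combination of the $np^2$-th, $n$-th, and $(n/p^2)$-th coefficients of $g$, and all three operations preserve the property of being $-|D|$ times a square.

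For $\HHcusp$, properties (i) and (iii) are handled exactly as above. The defining property replacing orthogonality is that $\xi_{\frac{3}{2}-k}(\cM)\in \SS_{k+\frac{1}{2}}$, and this is preserved because \eqref{eqn:xiequiv} gives $\xi_{\frac{3}{2}-k}(\cM|T(p^2))=\xi_{\frac{3}{2}-k}(\cM)|T(p^2)$, which lies in $\SS_{k+\frac{1}{2}}$ by Hecke stability of the latter. Essentially no step is a serious obstacle; the only subtle points are the preservation of the Kohnen plus space condition, which is imported from Kohnen's original work, and the careful bookkeeping of the square class in the explicit Hecke formula \eqref{eqn:Heckedef}.
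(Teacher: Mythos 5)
Your proof is correct and follows essentially the same route as the paper's: Kohnen's classical result for the plus space, the Hecke equivariance of the Zagier lifts (Lemma \ref{lem:Hecke}) for $\Jd$ and $\JD$, the self-adjointness \eqref{eqn:TnPet} for orthogonality, and an inspection of the explicit Hecke formula to see that the square class of the principal part is preserved. Your version simply spells out the details that the paper leaves implicit.
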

\begin{proof}
The first statement is classical \cite{KohnenMathAnn}.  Since $\Za_d=\widetilde{\Za}_d$ and $\Za_D=\widetilde{\Za}_D$ by \eqref{eqn:Zadeq} and \eqref{eqn:ZaDeq}, \eqref{eqn:ZagierHeckeD} and \eqref{eqn:ZagierHecked} imply that the subspaces $\JD$ and $\Jd$ are preserved under the Hecke algebra.  Since $\SSperp$ is Hecke stable by \eqref{eqn:TnPet}, the statements for $\SSperpD{D}$ and $\HHcusp$ follow by determining the action of $
T\left(n^2\right)$ on principal parts.
\end{proof}

\begin{proof}[Proof of Theorem \ref{thm:HeckeD}]
By Theorem \ref{thm:Zaharmonic}, we know that $\Za_D$ yields a Hecke equivariant isomorphism between $\Hcusp$ and $\SSperpD{D}$.  Therefore the associated quotient map $\pi_{\Za_{D}}:\Hcusp\to\SSperpD{D}\Big/\JD$ is surjective.  By Lemma \ref{lem:hecke}, $\JD$ is Hecke stable, and hence we have
$$
\Hcusp\Big/\ker\left(\pi_{\Za_{D}}\right)\iso \SSperpD{D}\Big/\JD,
$$
as Hecke modules.  Since $\JD = \Za_D\left(S_{2-2k}^!\right)$, it follows immediately that 
$$
\ker\left(\pi_{\Za_{D}}\right)=S_{2-2k}^!.
$$
However, by Theorem 1.2 of \cite{BGKO}, 
$$
\Hcusp\Big/S_{2-2k}^!\overset{\DD^{2k-1}}{\iso} \Sperp\Big/\DD^{2k-1}\left(S_{2-2k}^!\right)\iso M_{2k}.
$$
Since $\DD^{2k-1}$ and $\Za_D$ are Hecke equivariant, it follows that
$$
M_{2k}\iso \Hcusp\Big/S_{2-2k}^!\overset{\Za_D}{\iso}\SSperpD{D}\Big/\JD,
$$
as Hecke modules.  Finally, Kohnen \cite{Kohnen} proved that  $M_{2k}$ and $\MM_{k+\frac{1}{2}}$ are isomorphic as Hecke modules.  This completes the proof.
\end{proof}

\subsection{The fractional derivatives}\label{sec:halfderiv}
The goal of this section is to prove Theorem \ref{thm:Derduality} concerning the properties of $\Der=\Za_D\circ \Za_{d}^{-1}=\Za_D\circ\widetilde{\Za}_d^{-1}$, where we have used the fact that $\Za_d=\widetilde{\Za}_d$ by uniqueness. 
\begin{proof}[Proof of Theorem \ref{thm:Derduality}]
By Theorem \ref{thm:Zaharmonic}, one sees immediately that $\Der$ is an isomorphism from $\HHcusp$ to $\SSperpD{D}$.  By \eqref{eqn:ZagierHeckeD} and \eqref{eqn:ZagierHecked} together with Proposition \ref{prop:Zmodular}, the lift $\Der$ is Hecke equivariant.  It remains to show \eqref{eqn:Derduality}.   Note that by \eqref{eqn:ZadPoinc}, 
$$
\Za_d\left(F_{1,2-2k}\right) = F_{|d|,\frac{3}{2}-k}.
$$
Similarly, one computes
$$
\Za_D\left(F_{1,2-2k}\right) = P_{-|D|,k+\frac{1}{2}}.
$$
It follows that 
\begin{equation}\label{eqn:DerPoincmap}
\Der\left(F_{|d|,\frac{3}{2}-k}\right)= P_{-|D|,k+\frac{1}{2}}.
\end{equation}
By definition, the constant term of $\Za_D\left(F_{1,2-2k}\right)=P_{-|D|,k+\frac{1}{2}}$ vanishes and Proposition \ref{prop:orthog} implies that $\Za_D\left(F_{1,2-2k}\right)\in \SS_{k+\frac{1}{2}}^{\perp}$.  Hence, since $\xi_{\frac{3}{2}-k}\left(F_{|d|,\frac{3}{2}-k}\right)$ is a cusp form, we see by \eqref{eqn:pairingval} that 
$$
b\left(D,d\right)+a(d,D)=\left\{ P_{-|D|,k+\frac{1}{2}},F_{ |d|,\frac{3}{2}-k}\right\} = \left(\Za_D\left(F_{1,2-2k}\right),\xi_{\frac{3}{2}-k}\left(F_{ |d|,\frac{3}{2}-k}\right)\right)_{\reg}=0. 
$$
\end{proof}


\begin{thebibliography}{99}
\bibitem{AS} M. Abramovitz and I. Stegun, Handbook of Mathematical functions with formulas, graphs, and mathematical tables, 9th edition, New York, Dover, 1972.
\bibitem{Alfes} C. Alfes, \begin{it}Formulas for the coefficients of half-integral weight harmonic Maass forms,\end{it} preprint.
\bibitem{BHM} V. Blomer, G. Harcos, and P. Michel (with appendix by Z. Mao), \begin{it}A Burgess-like subconvex bound for twisted $L$-functions\end{it}, Forum Math. \begin{bf}19\end{bf} (2007), 61--105. 
\bibitem{Borcherds} R. Borcherds, \begin{it}Automorphic forms with singularities on Grassmannians,\end{it} Invent. Math. \begin{bf}132\end{bf} (1998), 491--562.
\bibitem{BGK} K. Bringmann, P. Guerzhoy, and B. Kane, \begin{it}Algebraic and $p$-adic properties of half-integral weight weakly holomorphic modular forms\end{it}, in preparation.
\bibitem{BGKO} K. Bringmann, P. Guerzhoy, Z. Kent, and K. Ono, \begin{it}Eichler-Shimura theory for  mock modular forms\end{it}, Math. Ann., accepted for publication.
\bibitem{BringmannOnoMathAnn} K. Bringmann and K. Ono, \begin{it}Arithmetic properties of coefficients of half-integral weight Maass-Poincar\'e series\end{it}, Math. Ann. \begin{bf}337\end{bf} (2007), 591--612.
\bibitem{BruinierFunke} J. Bruinier and J. Funke, \begin{it}On two geometric theta lifts\end{it},  Duke Math. J.  \begin{bf}125\end{bf}  (2004),  no. 1, 45--90.
\bibitem{BFI} J. Bruinier, J. Funke, and \"O. Imamo{$\overline{\text{g}}$}lu, \begin{it}Regularized theta liftings and periods of modular functions\end{it}, preprint.
\bibitem{BruinierOnoRhoades} J. Bruinier, K. Ono, and R. Rhoades, \begin{it}Differential operators for harmonic Maass forms and the vanishing of Hecke eigenvalues\end{it}, Math. Ann. \begin{bf}342\end{bf} (2008),  673--693.
\bibitem{Cohen} H. Cohen, \begin{it}Sums involving the values at negative integers of $L$-functions of quadratic characters\end{it}, Math. Ann. \textbf{217} (1975), 271--285.
\bibitem{DukeJenkins} W. Duke and P. Jenkins, \begin{it}Integral traces of singular values of weak Maass forms\end{it}, Algebra and Number Theory \begin{bf}2\end{bf} (2008), 573--593.
\bibitem{DIT} W. Duke, \"O. Imamo{$\overline{\text{g}}$}lu, and \'A. T\'oth, \begin{it}Cycle integrals of the $j$-function and mock modular forms,\end{it} Ann. of Math. \begin{bf}173\end{bf} (2011), 947--981.
\bibitem{DITAbh} W. Duke, \"O. Imamo{$\overline{\text{g}}$}lu, and \'A. T\'oth, \begin{it}Rational period functions and cycle integrals\end{it}, Abh. Math. Semin. Univ. Hambg. \textbf{80} (2010), 255--264. 
\bibitem{Fay} J. Fay, {\it{Fourier coefficients of the resolvent for a Fuchsian group}}, J. reine angew. Math. \textbf{293-294} (1977), 143--203.
\bibitem{GuerzhoyHecke} P. Guerzhoy, \begin{it}Hecke operators for weakly holomorphic modular forms and supersingular congruences\end{it} Proc. Amer. Math. Soc. \textbf{136} (2008), 3051--3059.
\bibitem{GuerzhoyGrids} P. Guerzhoy, \begin{it}On weak Maass - modular grids of even integral weight\end{it}, Math. Res. Lett. \begin{bf}16\end{bf} (2009), 19--27.
\bibitem{Iwaniec} H. Iwaniec, \begin{it}Fourier coefficients of modular forms of half-integral weight\end{it}, Invent. Math. \begin{bf}87\end{bf} (1987), 385--401.
\bibitem{IwaniecAut} H. Iwaniec, Topics in classical automorphic forms, Graduate Studies in Math. \begin{bf}17\end{bf}, American Mathematical Society, Providence, 1997.
\bibitem{KohnenMathAnn} W. Kohnen, \begin{it}Modular forms of half-integral weight on $\Gamma_0(4)$\end{it} Math. Ann. \textbf{248} (1980), 249--266.
\bibitem{Kohnen} W. Kohnen, \begin{it}Newforms of half-integral weight\end{it}, J. reine angew. Math. \begin{bf}333\end{bf} (1982), 32--72.
\bibitem{KohnenFourier} W. Kohnen, \begin{it}Fourier coefficients of modular forms of half-integral weight,\end{it} Math. Ann. \begin{bf}271\end{bf} (1985), 237--268.
\bibitem{KohnenZagier} W. Kohnen and D. Zagier, \begin{it}Values of $L$-series of modular forms at the center of the critical strip\end{it} \textbf{64} (1981), 175--198.
\bibitem{Niebur} D. Niebur, \begin{it}A class of nonanalytic automorphic functions\end{it}, Nagoya Math. J. \textbf{52} (1973), 133--145.
\bibitem{OnoUnearth} K. Ono, \begin{it}Unearthing the visions of a master: harmonic Maass forms and number theory\end{it}, Proceedings of the 2008 Harvard-MIT Current Developments in Mathematics Conference, International Press, Somerville, MA, 2009, 347--454.
\bibitem{Poincare} H. Poincar\'e, \begin{it}Sur les invariantes arithm\'etiques\end{it}, J. reine angew. Math. \begin{bf}129\end{bf} (1905), 89--150.
\bibitem{Shimura} G. Shimura, \begin{it} On modular forms of half integral weight\end{it}, Ann. of Math. \textbf{97} (1973), 440--481.
\bibitem{Shintani}  T. Shintani, \begin{it}On construction of holomorphic cusp forms of half integral weight\end{it}, Nagoya Math. J. \begin{bf}58\end{bf} (1975), 83--126.
\bibitem{Siegel} C. Siegel, Advanced analytic number theory, 2nd ed., Tata Institute of Fundamental Research Studies in Mathematics, 9, Tata Institute of
Fundamental Research, Bombay, 1980.
\bibitem{Waldspurger} J.-L. Waldspurger, \begin{it}Sur les coefficients de Fourier des formes modulaires de poids demi-entier,\end{it} J. Math. Pures et Appl. \begin{bf}60\end{bf} (1981), 375--484.
 \bibitem{ZagierTraces} D. Zagier, {\it{Traces of Singular Moduli}}, Motives, polylogarithms and Hodge theory, Part I (Irvine, CA, 1998), 211--244,  Int. Press Lect. Ser., 3, I, Int. Press, Somerville, MA, 2002.
 \end{thebibliography}
\end{document}